\documentclass{article}
\pagestyle{plain}
\usepackage[utf8]{inputenc}
\usepackage[margin=1 in]{geometry}
\usepackage{amsmath}
\usepackage{amsfonts}
\usepackage{amssymb}
\usepackage{amsthm}
\usepackage{bbm}
\usepackage{mathrsfs}
\usepackage{enumitem}
\usepackage{hyperref}
\usepackage{xcolor}
\usepackage{graphicx}
\usepackage{tikz}
\usetikzlibrary{graphs}

\newtheorem{theorem}{Theorem}
\numberwithin{theorem}{subsection}
\newtheorem{lemma}[theorem]{Lemma}
\newtheorem{prop}[theorem]{Proposition}
\newtheorem{cor}[theorem]{Corollary}
\newtheorem{conjecture}[theorem]{Conjecture}

\newtheorem{question}[theorem]{Question}

\theoremstyle{definition}
\newtheorem{definition}[theorem]{Definition}
\newtheorem{remark}[theorem]{Remark}
\newtheorem{example}[theorem]{Example}

\newcommand{\tb}{\textbf}

\newcommand{\tn}{\textnormal}
\newcommand{\se}{\subseteq}
\newcommand{\ol}{\overline}
\newcommand{\lam}{\lambda}
\newcommand{\tcb}{\textcolor{blue}}
\newcommand{\Aut}{\tn{Aut}}
\newcommand{\Lam}{\Lambda}
\newcommand{\til}{\widetilde}

\setcounter{MaxMatrixCols}{30}

\usepackage[maxbibnames=10,style=alphabetic,maxalphanames=10]{biblatex}
\addbibresource{references.bib}

\title{Distinguishability and linear independence for $H$-chromatic symmetric functions}
\author{Shao Yuan Lin \\ \href{mailto:syl54@cantab.ac.uk}{syl54@cantab.ac.uk} \and Laura Pierson \\ \href{mailto:lcpierson73@gmail.com}{lcpierson73@gmail.com}}
\begin{document}

\maketitle

\begin{abstract}
    We study the $H$-chromatic symmetric functions $X_G^H$ (introduced in \cite{EFHKY22} as a generalization of the chromatic symmetric function (CSF) $X_G$), which track homomorphisms from the graph $G$ to the graph $H$. We focus first on the case of self-chromatic symmetric functions (self-CSFs) $X_G^G$, making some progress toward a conjecture from \cite{EFHKY22} that the self-CSF, like the normal CSF, is always different for different trees. In particular, we show that the self-CSF distinguishes trees from non-trees with just one exception, we check using Sage \cite{sage} that it distinguishes all trees on up to 12 vertices, and we show that it determines the number of legs of a spider and the degree sequence of a caterpillar given its spine length. We also show that the self-CSF detects the number of connected components of a forest, again with just one exception. Then we prove some results about the power sum expansions for $H$-CSFs when $H$ is a complete bipartite graph, in particular proving that the conjecture from \cite{EFHKY22} about $p$-monotonicity of $\omega(X_G^H)$ for $H$ a star holds as long as $H$ is sufficiently large compared to $G$. We also show that the self-CSFs of complete multipartite graphs form a basis for the ring $\Lam$ of symmetric functions, and we give some construction of bases for the vector space $\Lam^n$ of degree $n$ symmetric functions using $H$-CSFs $X_G^H$ where $H$ is a fixed graph that is not a complete graph, answering a question from \cite{EFHKY22} about whether such bases exist. However, we show that there generally do not exist such bases with $G$ fixed, even with loops, answering another question from \cite{EFHKY22}. We also define the $H$-chromatic polynomial as an analogue of the chromatic polynomial, and ask when it is the same for different graphs.
\end{abstract}

\section{Introduction}\label{sec:intro}

In \cite{EFHKY22}, Eagles, Foley, Huang, Karangozishvili, and Yu define the \emph{\tb{\tcb{$H$-chromatic symmetric function ($H$-CSF)}}} of a graph $G$ as $$X_G^H(x_1,x_2,\dots,x_{|V(H)|}) := \sum_{f:G\to H} \ \ \sum_{\phi:V(H)\to\{1,2,\dots,|V(H)|\}} \ \ \prod_{v\in V(G)}x_{\phi(f(v))},$$ where $f:G\to H$ is a \emph{\tb{\tcb{graph homomorphism}}} from $G$ to the graph $H$ (meaning $f$ sends every vertex of $G$ to a vertex of $H$, such that adjacent vertices of $G$ map to adjacent vertices of $H$), and $\phi:V(H)\to\{1,2,\dots,|V(H)|\}$ is a labeling of the vertices of $H$ with distinct positive integers from 1 to $|V(H)|$. This can be extended to a symmetric function in a countably infinite set of variables $x_1,x_2,\dots$ by writing it in the basis of \emph{\tb{\tcb{monomial symmetric functions}}} in the variables $x_1,\dots,x_{|V(H)|}$, $$m_\lam(x_1,x_2,\dots,x_{|V(H)|}):=\sum_{i_1,\dots,i_{\ell(\lam)}\in\{1,2,\dots,|V(H)|\}\tn{ all distinct}}x_{i_1}^{\lam_1}\dots x_{i_{\ell(\lam)}}^{\lam_{\ell(\lam)}},$$ and then replacing each $m_\lam(x_1,\dots,x_{|V(H)|})$ with the monomial symmetric function $$m_\lam(x_1,x_2,\dots):=\sum_{i_1,\dots,i_{\ell(\lam)}\in\mathbb{N}\tn{ all distinct}}x_{i_1}^{\lam_1}\dots x_{i_{\ell(\lam)}}^{\lam_{\ell(\lam)}}$$ in the countably infinite set of variables $x_1,x_2,\dots.$ Stanley's \emph{\tb{\tcb{chromatic symmetric function (CSF)}}} from \cite{stanley1995symmetric} is a scalar multiple of the special case where $H$ is a complete graph. The CSF is defined by $$X_G(x_1,x_2,\dots) := \sum_{\kappa\tn{ a proper coloring}} \ \prod_{v\in V(G)}x_{\kappa(v)},$$ where a \emph{\tb{\tcb{proper set coloring}}} is a function $\kappa:V(G)\to \{1,2,3,\dots\}$ assigning a positive-integer-valued color to each vertex such that adjacent vertices get distinct colors. 

The authors of \cite{EFHKY22} show that $$X_G^H = \sum_\lam d_\lam m_\lam^{|V(H)|},$$ where $d_\lam$ counts homomorphisms $f:G\to H$ of \emph{\tb{\tcb{type $\lam$}}}, meaning the sizes of the preimages for the vertices in $H$ are equal to the parts of $\lam$, and $m_\lam^{|V(H)|}$ is a scalar multiple of the monomial symmetric function $m_\lam$, specifically, $$m_\lam^{n} := \frac{n!}{\binom{n}{r_1(\lam),r_2(\lam),\dots,n-\ell(\lam)}}m_\lam,$$ where $r_i(\lam)$ is the number of parts of $\lam$ of size $i$.

\bigskip 

\noindent In \S\ref{sec:self-CSF} of this paper, we focus on the special case of \emph{\tb{\tcb{self-CSFs}}} $X_G^G$, where $G=H$:
\begin{itemize}
    \item In \S\ref{subsec:self-CSF_general}, we note that for large graphs, almost all pairs with the same number of vertices actually have the same self-CSF. However, we also show that the self-CSF is not a strictly weaker invariant than the CSF by giving an example of two graphs with the same CSF but different self-CSF.
    
    \item In \S\ref{subsec:self-CSF_bipartite}, we show that for connected bipartite graphs, the self-CSF determines the first few terms of the star sequence, while for disconnected bipartite graphs, it determines the multiset of nonzero differences between the part sizes in the connected components.
    
    \item In \S\ref{subsec:self-CSF_trees}, we check that the conjecture from \cite{EFHKY22} that the self-CSF distinguishes trees holds for trees on up to 12 vertices. We also show that the self-CSF distinguishes trees from non-trees except for the case $X_{P_3}^{P_3}=X_{K_1\sqcup K_2}^{K_1\sqcup K_2}$, and we show how to use the $m_{2,1^{n-2}}^n$ coefficient to learn some information about the leaves of the tree. Then in \S\ref{subsec:spiders} and \S\ref{subsec:caterpillars}, we show that additional information can be learned in the special cases of \emph{\tb{\tcb{spiders}}} and \emph{\tb{\tcb{caterpillars}}}.
    
    \item In \S\ref{subsec:self-CSF_forests}, we show that the self-CSF can detect the number of connected components of a forest, again with $X_{P_3}^{P_3}=X_{K_1\sqcup K_2}^{K_1\sqcup K_2}$ as a special exception.
\end{itemize}
In \S\ref{sec:p-expansions}, we look at expansion formulas for $X_G^H$ in the basis of \emph{\tb{\tcb{power sum symmetric functions}}} $p_\lam$ for several special cases:
\begin{itemize}
    \item In \S\ref{subsec:p_monotonicity_stars}, we give a formula for $X_G^{S_{n+1}}$ (where $S_{n+1}$ is the $(n+1)$-vertex \emph{\tb{\tcb{star graph}}}) that holds as long as $G$ is bipartite (since otherwise $X_G^{S_{n+1}}=0$) and $n$ is larger than both parts in any bipartition of $G$. We use this to prove the conjecture from \cite{EFHKY22} that $\omega(X_G^{S_{n+1}})$ always has all nonnegative or all nonpositive $p$-coefficients for those cases.
    
    \item In \S\ref{subsec:p_K_2,n}, we give a formula for the $p$-expansion of $X_G^{K_{2,n}}$, again assuming $G$ is bipartite and $n$ is larger than both parts in any bipartition of $G$, and in \S\ref{subsec:p_K_m,n}, we extend these results to prove some general facts about the $p$-expansion of $X_G^{K_{m,n}}$ for $n$ sufficiently large compared to $G$.
\end{itemize}
In \S\ref{sec:bases}, we give some examples of bases for the ring $\Lam$ of symmetric functions such that all the basis elements are $H$-CSFs:
\begin{itemize}
    \item In \S\ref{subsec:self-CSF_basis}, we show that the self-CSFs of complete multipartite graphs $K_\lam$ form a basis for $\Lam$.

    \item In \S\ref{subsec:clique_containing_basis}, we gives several general classes of graphs $H$  for which one can construct a basis $X_{G_\lam}^H$ for the vector space $\Lam^n$ of symmetric functions of degree $n$ using $H$-CSFs with $H$ fixed, answering a question from \cite{EFHKY22} about whether such bases can exist where $H$ is not a complete graph. We show that the fixed graph $H$ can be any graph containing an $n$-vertex clique, or a disjoint union of cliques without too many singletons, or if one allows loops, a disjoint union of vertices with loops attached to them, or a path with a loop on one endpoint. On the other hand, we show that if $H$ is a disjoint union of a small number of edges, or is formed from $K_n$ by deleting disjoint edges, no such basis can exist.
    \item In \S\ref{subsec:G_fixed_no_bases}, we show that for $n\ge 12$, it is not possible to form a basis $X_G^{H_\lam}$ for $\Lam^n$ with $G$ fixed even if the graphs $G$ and $H_\lam$ are allowed to contain loops, answering another question from \cite{EFHKY22}.
\end{itemize}
In \S \ref{sec:recursive}, we look at a few simple ways to compute $H$-CSFs in terms of other $H$-CSFs:
\begin{itemize}
    \item In \S \ref{subsec:disjoint_union_H}, we show how to compute the $H$-CSF of a graph by breaking down $H$ into its connected components.
    \item In \S\ref{subsec:K_n^1}, we show how to compute $X_G^{K_n^1}$ in terms of the normal $X_G$ and CSFs of weighted graphs formed by contracting certain subsets of the vertices of $G$, where $K_n^1$ is the $n$-vertex clique with a loop on one vertex.
\end{itemize}
Finally, in \S\ref{sec:H-chromatic_poly}, we define the \emph{\tb{\tcb{$H$-chromatic polynomial}}} $\chi_G^H(k)$ as the specialization of $X_G^H$ to $x_1=\dots=x_k=1$ and $x_i=0$ for $i>k$.
\begin{itemize}
    \item In \S\ref{subsec:polynomiality}, we show that this is in fact a polynomial, and give an explicit formula for it along with an example.
    \item In \S\ref{subsec:equal_poly}, we show that for every $n$, there exist graphs with the same $S_n^1$-chromatic polynomial but different $S_n^1$-CSF, where $S_n^1$ is the star graph with a loop at the center vertex.
\end{itemize}

\section{Self-chromatic symmetric functions (self-CSFs) \texorpdfstring{$X_G^G$}{XGG}}\label{sec:self-CSF}

For the special case $G=H$, we know from \cite{EFHKY22} that the self-CSF $X_G^G$ can detect $n:=|V(G)|$ based on the degree, and it can detect whether or not $G$ is bipartite, because $G$ is bipartite if and only if $X_G^G$ contains a monomial $m_\lam^n$ with $\ell(\lam)=2$. However, it \emph{cannot} detect whether or not $G$ is connected, because the authors of \cite{EFHKY22} note that $X_{P_3}^{P_3} = X_{K_1\sqcup K_2}^{K_1\sqcup K_2},$ where $P_3$ is the 3-vertex path, and $K_1\sqcup K_2$ is the graph consisting of an isolated vertex plus two other vertices joined by an edge.

\subsection{Self-CSFs for general graphs}\label{subsec:self-CSF_general}

For the self-CSFs of general graphs $G$, we note that if the only endomorphism of $G$ is the identity automorphism, then $X_G^G=m^{|V(G)|}_{1^{|V(G)|}}$. In fact, this is the case for most graphs:

\begin{prop}\label{prop:random_graphs}
    Let $G(n,p)$ denote a random graph with $n$ vertices, each pair of which forms an edge with probability $p$. Then for every $p\in(n^{-1/3}\log^2n,1-n^{-1/3}\log^2n)$, $|\tn{End}(G)|=1$ asymptotically (as $n\rightarrow\infty$) almost surely for all $G\in G(n,p)$. 
\end{prop}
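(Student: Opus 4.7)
The plan is to reduce the statement $|\tn{End}(G)|=1$ to two simultaneous a.a.s.\ properties of $G$: that $G$ is a \emph{core} (every endomorphism is a bijection, hence an automorphism) and that $G$ is \emph{asymmetric} (the only automorphism is the identity). Both are handled by first-moment arguments over the set of non-identity $f\colon V(G)\to V(G)$, split into the bijective and non-bijective cases.

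For the bijective case, this is the classical asymmetry of random graphs. For each non-identity permutation $\sigma$ with $k\ge 2$ non-fixed points, the orbits of $\sigma$ on $\binom{V(G)}{2}$ of size $r\ge 2$ must be monochromatic in $G$, each contributing probability $p^r+(1-p)^r\le \max(p,1-p)^{r-1}$. Summing $\Pr[\sigma\in\Aut(G)]$ over such $\sigma$ yields $o(1)$ as soon as $p(1-p)\gg (\log n)/n$, which is far weaker than the given range. For the non-bijective case, any such endomorphism $f$ satisfies $f(u)=f(v)$ for some $u\ne v$, which forces $uv\notin E(G)$; more generally, I parametrize $f$ by its kernel partition $\pi$ of $V(G)$ together with the injection from blocks of $\pi$ to their image vertices. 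If $\pi$ has block sizes $s_1,\dots,s_k$ with $s_i\ge 2$ for at least one $i$, the within-block non-edge constraints contribute a factor of $(1-p)^{\sum_i\binom{s_i}{2}}$, while the between-block edge-preservation constraints contribute further factors from requiring every edge of $G$ to map to an edge. The simplest fold---one block of size $2$, the rest singletons---gives an expected count of at most $n^2(1-p)(1-p+p^2)^{n-2}$, which is $o(1)$ whenever $p(1-p)\gg (\log n)/n$; the lower bound $p\ge n^{-1/3}\log^2 n$ is used to absorb the combinatorial growth in the union bound over more elaborate kernel patterns (such as folds collapsing several vertices simultaneously), and the upper bound $p\le 1-n^{-1/3}\log^2 n$ enters symmetrically by passing to the complement graph.

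The main obstacle will be the enumeration in the core case: the family of kernel partitions $\pi$ together with image assignments is enormous, and the between-block edge-preservation constraints are correlated with the within-block non-edge constraints, so the calculation cannot be cleanly decoupled. The cleanest route is probably to group endomorphisms by the number of non-fixed points of $f$ and by the co-image size $n-|\tn{Im}(f)|$, handling the ``few large fibers'' and ``many small fibers'' regimes separately; alternatively, one may invoke existing results on the core property of random graphs in this density regime and combine them with classical asymmetry to give a short proof.
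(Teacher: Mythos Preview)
Your decomposition into ``$G$ is a core'' plus ``$G$ is asymmetric'' is exactly what the paper does. The difference is that the paper takes the shortcut you mention in your final sentence: rather than carrying out the first-moment enumeration over non-bijective maps, it simply cites an existing result (Bonato et al., 2009) establishing that $G(n,p)$ is a.a.s.\ a core for precisely the range $p\in(n^{-1/3}\log^2 n,\,1-n^{-1/3}\log^2 n)$, cites Erd\H{o}s--R\'enyi (1963) for a.a.s.\ asymmetry, and then intersects the two a.a.s.\ events. Your from-scratch sketch is on the right track and the asymmetry half is standard, but as you yourself acknowledge, the union bound over all kernel partitions and image assignments in the core half is where the real work lies---and the specific threshold $n^{-1/3}\log^2 n$ is not something your sketch actually derives; it is the range inherited from the cited core theorem. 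So the cleanest route is exactly the one you float at the end, and that is what the paper does in two sentences.
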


\begin{proof}
    As shown in Theorem 2 of \cite{bonato2009greatcores}, a random graph $G\in G(n,p)$ is a core (i.e. $\tn{End}(G)=\tn{Aut}(G)$) asymptotically almost surely for $p\in(n^{-1/3}\log^2n,1-n^{-1/3}\log^2n)$. On the other hand, Theorem 2 in \cite{erdos1963asymmetric} tells us that a random graph $G$ with $n$ vertices is asymmetric (i.e. $|\tn{Aut}(G)|=1$) asymptotically almost surely. Combining the two results and noting that the intersection of asymptotically almost sure events is asymptotically almost sure, the result follows.
\end{proof}

Since $(n^{-1/3}\log^2n,1-n^{-1/3}\log^2n)\rightarrow(0,1)$ as $n\rightarrow\infty$, the asymptotic almost-sureness extends to the set of all random graphs with $n$ vertices, allowing us to deduce the following.

\begin{cor}\label{cor:same_self-CSF}
    Almost all pairs of finite graphs with the same number of vertices have the same self-CSF.
\end{cor}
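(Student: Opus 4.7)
The plan is to use Proposition \ref{prop:random_graphs} directly by specializing to $p=1/2$, which makes $G(n,p)$ coincide with the uniform distribution on labeled $n$-vertex graphs. First I would recall the observation from the paragraph preceding Proposition \ref{prop:random_graphs}: if $|\tn{End}(G)|=1$, then the only homomorphism $G\to G$ is the identity, and summing over the $n!$ vertex labelings of the resulting single-preimage homomorphism gives $X_G^G=m_{1^n}^n$, which depends only on $n=|V(G)|$.

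Next I would observe that for all sufficiently large $n$, the value $p=1/2$ lies in the interval $(n^{-1/3}\log^2 n,\ 1-n^{-1/3}\log^2 n)$, so Proposition \ref{prop:random_graphs} applies to $G(n,1/2)$. Since $G(n,1/2)$ is precisely the uniform distribution on labeled graphs with vertex set $\{1,\dots,n\}$, we get that a uniformly random labeled $n$-vertex graph $G$ satisfies $|\tn{End}(G)|=1$, and hence $X_G^G=m_{1^n}^n$, asymptotically almost surely. Taking two independent uniform samples $G_1,G_2$ and intersecting the two a.a.s.\ events yields $X_{G_1}^{G_1}=m_{1^n}^n=X_{G_2}^{G_2}$ a.a.s., which is exactly the claim that the fraction of ordered pairs of labeled $n$-vertex graphs with equal self-CSF tends to $1$.

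To phrase the statement for \emph{unlabeled} graphs (which is the natural reading of ``pairs of finite graphs''), I would invoke the standard fact that almost all labeled graphs on $n$ vertices have trivial automorphism group (this is already used inside Proposition \ref{prop:random_graphs}), so the uniform measure on labeled graphs and the uniform measure on isomorphism classes agree asymptotically up to a factor of $n!$ on each isomorphism class, which does not affect a.a.s.\ statements. Hence the conclusion descends from labeled pairs to unlabeled pairs.

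The argument is essentially a one-line corollary once the proposition is in hand; the only mild obstacle is being precise about the probabilistic model (labeled vs.\ unlabeled, and the fact that uniform equals $G(n,1/2)$), which I would address by the brief remark above rather than by reworking the random-graph estimates.
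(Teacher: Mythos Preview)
Your proposal is correct and takes essentially the same approach as the paper: the paper's entire argument is the one-sentence observation preceding the corollary that the interval in Proposition \ref{prop:random_graphs} eventually contains any fixed $p\in(0,1)$, which is precisely what you make explicit by specializing to $p=1/2$. Your treatment is in fact more careful than the paper's, since you spell out the identification of $G(n,1/2)$ with the uniform measure on labeled graphs and address the labeled-versus-unlabeled distinction, neither of which the paper makes explicit.
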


On the other hand, there do exist graphs that are distinguished by the self-CSF but not by the CSF, as the following example illustrates:

\begin{example}\label{ex:same_CSF_different_self-CSF}
    Stanley \cite{stanley1995symmetric} gives the following example of two graphs with the same CSF:
    \begin{center} \begin{tikzpicture}
    \graph[nodes={draw, circle, fill=black, inner sep=2pt}, empty nodes, no placement, edges={thick}]{a[x=0,y=0]--b[x=0.866,y=0.5]--c[x=0.866,y=-0.5]--a--d[x=-0.866,y=0.5]--e[x=-0.866,y=-0.5]--a};
    \end{tikzpicture} \hspace{30pt} \begin{tikzpicture}
    \graph[nodes={draw, circle, fill=black, inner sep=2pt}, empty nodes, no placement, edges={thick}]{a[x=1,y=0]--b[x=0,y=1]--c[x=-1,y=1]--d[x=-1,y=0]--e[x=0,y=0]--b;c--e};
    \end{tikzpicture} \end{center}
    However, we can check that these two graphs have different self-CSF. We know that $\big[m_{1^{|V(G)|}}^{|V(G)|}\big]X_G^G$ counts the number of automorphisms of $G$. For the graphs above, the left graph has 8 automorphisms, because we can swap the 2 triangles or not, and then flip each triangle or not, while the right graph only has 2 automorphisms, because the only symmetry is to swap the 2 vertices forming the diagonal of the square. Thus, the self-CSFs of the two graphs have different $m_{1^5}^5$ coefficients, so they are not the same.
\end{example}

Example \ref{ex:same_CSF_different_self-CSF} shows that the self-CSF is not a strictly weaker invariant than the CSF. However, in light of Corollary \ref{cor:same_self-CSF}, $X_G^G$ is a bad tool for distinguishing graphs in general, especially large ones. This is why we restrict our attention in the remainder of this section to certain families of graphs, such as trees, where Corollary \ref{cor:same_self-CSF} does not hold. In fact, the authors of \cite{EFHKY22} conjecture that the self-CSF distinguishes all trees from each other, so we will focus here on making some progress toward that conjecture (\S\ref{subsec:self-CSF_trees}, \S\ref{subsec:spiders}, and \S\ref{subsec:caterpillars}), and on studying what information the self-CSF can tell us about a graph in the related cases of bipartite graphs (\S\ref{subsec:self-CSF_bipartite}) and forests (\S\ref{subsec:self-CSF_forests}).

\subsection{Self-CSFs for bipartite graphs}\label{subsec:self-CSF_bipartite}

We focus first on the case of bipartite graphs. We show that for connected bipartite graphs, the self-CSF can be used to recover the first part of the star sequence:

\begin{prop}\label{prop:bipartite_degrees}
    If $G$ is bipartite and is known to be connected, then for each for each $1\le \ell\le k$, $X_G^G$ determines the number $\sum_{v\in V(G)}\binom{\deg(v)}{\ell}$ of induced $(\ell+1)$-vertex stars in $G$, or equivalently, the power sums $\sum_{v\in V(G)}\deg(v)^\ell$, where $k$ is the size of the smaller part in the bipartition of $G$.
\end{prop}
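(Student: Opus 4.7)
The plan is to proceed by induction on $\ell\in\{1,\dots,k\}$, identifying for each $\ell$ a specific type $\lam_\ell$ such that $d_{\lam_\ell}$ is a nonzero multiple of $T_\ell(G):=\sum_v\binom{\deg(v)}{\ell}$ plus a correction already determined by $T_1,\dots,T_{\ell-1}$.

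For the base case $\ell=1$, I take $\lam_1=(n-k,k)$. Any homomorphism $f\colon G\to G$ of type $(n-k,k)$ has image of size $2$, and the two preimages partition $V(G)$ into independent sets of sizes $n-k$ and $k$. Since $G$ is connected and bipartite, its unique proper $2$-coloring forces these preimages to be $B$ and $A$. Thus $f$ is fully determined by its ordered pair of adjacent image vertices $(f(B),f(A))$, giving $d_{(n-k,k)}=2|E(G)|=T_1(G)$.

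For the inductive step $\ell\ge 2$, take $\lam_\ell=(n-k,\mu)$ with $\mu\vdash k$ of length $\ell$, for concreteness $\mu=(k-\ell+1,1^{\ell-1})$. Each homomorphism of type $\lam_\ell$ has an image vertex $v^*$ whose preimage $I:=f^{-1}(v^*)$ is an independent set of size $n-k$; the remaining image vertices' preimages partition $V\setminus I$ into fibers matching $\mu$. I split according to whether $I=B$. The principal subfamily ($I=B$), in which $f|_B\equiv v^*$ and $f|_A\colon A\to N(v^*)$ has fiber multiset $\mu$, contributes exactly $N(\mu)\,T_\ell(G)$, where $N(\mu):=\frac{k!\,\ell!}{\prod_i\mu_i!\,\prod_j r_j(\mu)!}$ is the standard multinomial factor arising from first choosing the $\ell$-subset of $N(v^*)$ and then partitioning $A$ into labelled fibers matching $\mu$. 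The correction subfamily ($I\ne B$) enumerates alternative maximum independent sets: by K\"onig-Egerv\'ary duality each such $I$ arises from $B$ by a ``safe swap'' exchanging some $T\subseteq B$ for $S\subseteq A$ of equal size with $N_G(S)\subseteq T$. I plan to show each swap's contribution factors through $T_1,\dots,T_{\ell-1}$ and coefficients of $X_G^G$ for strictly simpler types, both available by induction. Subtracting the correction from $d_{\lam_\ell}$ and dividing by $N(\mu)$ then recovers $T_\ell(G)$.

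The main obstacle is the correction analysis: enumerating admissible safe swaps $(S,T)$ and verifying that each homomorphism count arising from $I\ne B$ decomposes cleanly through lower-order star sums and already-determined coefficients. A further subtlety arises in the balanced case $k=n-k$, when $A$ is itself an independent set of size $n-k$ and contributes a second principal term mirroring $I=B$; the argument adapts, but with an effective doubling of the leading coefficient that must be accounted for when inverting to extract $T_\ell(G)$.
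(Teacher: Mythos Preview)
Your choice of types $\lam_\ell=(n-k,k-\ell+1,1^{\ell-1})$ and your principal-term count match the paper exactly. The gap is that you have overlooked the one-line structural lemma that makes your ``correction subfamily'' \emph{empty} when $n-k>k$, so that no induction and no K\"onig--Egerv\'ary machinery is needed at all. Because $G$ is connected and bipartite, every endomorphism $f\colon G\to G$ either preserves or swaps the two sides: if $f(v)$ lies in $V_k$ then every neighbor of $v$ maps into $V_{n-k}$ (images of adjacent vertices are adjacent in the bipartite target $G$), and by connectedness this propagates so that vertices at even distance from $v$ land in the same side as $f(v)$ and vertices at odd distance in the opposite side. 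In particular, \emph{every fiber of $f$ lies entirely within one side of the bipartition}. A fiber of size $n-k>k$ therefore cannot sit inside $V_k$ and must equal $V_{n-k}=B$ exactly; your case $I\ne B$ never occurs. The paper's proof is simply your principal contribution (with the balanced-case doubling you already flagged), read off directly for each $\ell$ with no correction term and no induction.

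Your proposed inductive treatment of the correction is thus unnecessary, and as written it is not clearly workable either. Alternative independent sets of size $n-k$ certainly exist in many connected bipartite graphs, but by the lemma above none of them is ever realized as a homomorphism fiber, so an enumeration over ``safe swaps'' would be counting phantom contributions. Without the fiber-respects-bipartition observation, there is no visible mechanism by which such an enumeration should factor through $T_1,\dots,T_{\ell-1}$ alone.
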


\begin{proof}
    Let $V(G)=V_k\sqcup V_{n-k}$ be the bipartition, with $|V_k|=k$ and $|V_{n-k}|=n-k.$ This bipartition is unique since $G$ is connected. Also, the values $n-k$ and $k$ can be determined from $X_G^G$, since $m_{n-k,k}^n$ is the only length 2 monomial that shows up in $X_G^G$.
    
    Suppose $k\le n-k$, and consider the coefficient $$[m^n_{n-k,k-\ell+1,1^{\ell-1}}]X_G^G.$$ Let $f:G\to G$ be any graph homomorphism from $G$ to itself. Note that if a particular vertex $v\in V(G)$ maps to some vertex $f(v)\in V_k,$ then all neighbors of $v$ must map to vertices in $V_{n-k}$, and then all their neighbors must map to vertices in $V_k$, and so on. Thus, all vertices at even distance from $v$ must map to vertices in $V_k$, while all vertices at odd distance from $v$ must map to vertices in $V_{n-k}$. The vertices at even distance from $v$ are either those in $V_k$ or those in $V_{n-k}$, so either $f(V_k)\se V_k$ and $f(V_{n-k})\se V_{n-k}$, or $f(V_k)\se V_{n-k}$ and $f(V_{n-k})\se V_k$.
    
    Assume first that $n-k\ne k,$ so $n-k>k.$ Since $n-k > k > k-\ell$, the only way for $v\in V(G)$ to be the image under $f$ of $n-k$ distinct vertices in $G$ is if those $n-k$ vertices are precisely the vertices in $V_{n-k}$, since the vertices in $f^{-1}(v)$ must all belong to $V_k$ or all to $V_{n-k}$. Thus, the map $f:G\to G$ must send all vertices in $V_{n-k}$ to the same vertex $v=f(V_{n-k})\in V(G)$ (although $v$ need not be in $V_{n-k}$). Then, the vertices in $V_k$ all need to map to neighbors of $v$, such that $k-\ell+1$ of them map to the same neighbor of $v$, and the others map to distinct neighbors of $v$. For $\ell< k$, the number of ways to do this is $$[m_{n-k,k-\ell+1,1^{\ell-1}}^n]X_G^G=\ell!\binom{k}{\ell-1}\sum_{v\in V(G)}\binom{\deg(v)}{\ell},$$ since we can choose the $\ell$ neighbors of $v$ in the image of $f$ in $\binom{\deg(v)}{\ell}$ ways, we can choose the $\ell-1$ vertices in $V_k$ mapping to distinct neighbors of $v$ in $\binom{k}{\ell-1}$ ways, and we can choose which of the $\ell$ chosen neighbors of $v$ to map each of the $\ell-1$ vertices in $V_k$ to (and which of those $\ell$ vertices to map the remaining $k-\ell+1$ vertices in $V_k$ to) in $\ell!$ ways. For $\ell=k$, the set of $k-\ell+1$ leftover vertices in $V_k$ actually also has size 1, so it should not be distinguished from the other vertices in $V_k$. Thus, we instead get $$[m_{n-k,1^k}^n]X_G^G=k!\sum_{v\in V(G)}\binom{\deg(v)}{k},$$ since we just need to choose $k$ neighbors of $v$ to map all vertices of $V(G)$ to, and then choose the order of which vertex in $V_k$ maps to which neighbor of $v$.

    If $n-k=k$, we get almost the same thing, except that we could have either $f(V_k)=v$ or $f(V_{n-k})=v$, so the above coefficients would all get multiplied by 2 to account for these 2 possibilities. However, we can determine whether or not we are in this case since we know the values of $k$ and $n-k$, so we know whether or not to divide the coefficients by 2.

    It follows that we can learn $\sum_{v\in V(G)}\binom{\deg(v)}{\ell}$ for each $1 \le \ell\le k,$ so by taking linear combinations of those values, we can also learn $\sum_{v\in V(G)}\deg(v)^\ell$.
\end{proof}

We can also use $X_G^G$ to learn some information about $G$ when $G$ is a disconnected bipartite graph:

\begin{prop}\label{prop:diff_sequences}
    If $G$ is bipartite, the multiset of nonzero differences between the two part sizes in the bipartition of each connected component of $G$ can be determined from $X_G^G$.
\end{prop}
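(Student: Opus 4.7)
The plan is to extract the multiset of nonzero bipartition differences from the length-$2$ monomial coefficients of $X_G^G$. Write the components of $G$ as $C_1,\dots,C_r$, with $C_i$ having bipartition $(A_i, B_i)$ of sizes $a_i \leq b_i$; because $C_i$ is connected the bipartition is unique up to swap, so $d_i := b_i - a_i \geq 0$ is well-defined, and the goal is to recover the multiset $\{d_i : d_i > 0\}$.

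First I will analyze homomorphisms $f : G \to G$ whose image has exactly $2$ vertices. Assuming $|E(G)| \geq 1$ (the edgeless case is handled separately at the end), any such $f$ must send edges to edges, so its image is an edge $\{u,v\}$ of $G$. Since each $C_i$ is connected and bipartite, there are exactly two homomorphisms from $C_i$ to $\{u,v\}$, obtained by choosing which of $A_i, B_i$ maps to $u$; recording this by $\epsilon_i \in \{\pm 1\}$ and ranging over the $|E(G)|$ edges $\{u,v\}$ enumerates all $|E(G)| \cdot 2^r$ homomorphisms with image of size $2$. A direct count gives $|f^{-1}(u)| - |f^{-1}(v)| = \pm \sum_i \epsilon_i d_i$, so $f$ has type $(n-k,k)$ with $n - 2k = |\sum_i \epsilon_i d_i|$. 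Setting $N(s) := \#\{\epsilon \in \{\pm 1\}^r : \sum_i \epsilon_i d_i = s\}$ and using $N(s) = N(-s)$, this yields
\begin{align*}
[m_{(n-k,k)}^n]\, X_G^G &= 2\,|E(G)|\,N(n-2k) \quad \text{for } 0 < k < n/2, \\
[m_{(n/2,n/2)}^n]\, X_G^G &= |E(G)|\,N(0) \quad \text{when } n \text{ is even.}
\end{align*}

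Next I will assemble these coefficients into the Laurent polynomial
$$Q(q) := \sum_{0 < k < n/2} \tfrac{1}{2}\bigl[m_{(n-k,k)}^n\bigr]\, X_G^G \cdot \bigl(q^{n-2k} + q^{-(n-2k)}\bigr) + \bigl[m_{(n/2,n/2)}^n\bigr]\, X_G^G,$$
where the last term is omitted if $n$ is odd. Combining the two cases shows $Q(q) = |E(G)| \cdot \prod_{i=1}^r (q^{d_i} + q^{-d_i}) = |E(G)| \cdot 2^{r_0} \cdot \til P(q)$, where $r_0$ is the number of components with $d_i = 0$ and $\til P(q) := \prod_{d_i > 0}(q^{d_i} + q^{-d_i})$. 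Since $\til P(q)$ is palindromic and its highest-degree coefficient is $1$ (uniquely achieved by taking $\epsilon_i \equiv +1$ on the positive-$d_i$ components), dividing $Q(q)$ by its leading coefficient recovers $\til P(q)$ exactly from $X_G^G$.

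Finally I will show that $\til P(q)$ determines the multiset $\{d_i : d_i > 0\}$. Multiplying by $q^{\sum_{d_i > 0} d_i}$ gives $P^{\star}(q) := \prod_{d_i > 0}(q^{2d_i} + 1) \in \mathbb{Z}[q]$, so it suffices to show the multiset of factors is uniquely determined. Let $d^{\star}$ be the largest value in the multiset. The cyclotomic polynomial $\Phi_{4 d^{\star}}$ divides $q^{2 d^{\star}} + 1$ but fails to divide $q^{2d} + 1$ for any $0 < d < d^{\star}$, since a primitive $4 d^{\star}$-th root of unity $\zeta$ satisfies $\zeta^{2d} = -1$ only when $d^{\star} \mid d$. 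Hence the multiplicity of $\Phi_{4 d^{\star}}$ in $P^{\star}(q)$ equals the multiplicity of $d^{\star}$ in $\{d_i : d_i > 0\}$; peeling off $(q^{2 d^{\star}} + 1)$ to this multiplicity and inducting on $d^{\star}$ recovers the full multiset. The main obstacle will be this uniqueness step, which rests on the cyclotomic structure of $q^{2d}+1$; the edgeless case $|E(G)| = 0$ is degenerate but detectable from $[m_n^n]\, X_G^G \neq 0$, in which case the multiset equals $\{1,1,\dots,1\}$ with $n$ entries.
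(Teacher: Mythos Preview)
Your proof is correct and takes a genuinely different route from the paper's. Both arguments begin with the same observation: when $G$ has at least one edge, a type-$(n-k,k)$ endomorphism of $G$ has image equal to some edge $\{u,v\}$, and the $2^r$ homomorphisms onto that edge are parametrized by sign vectors $\epsilon\in\{\pm1\}^r$ with $|f^{-1}(u)|-|f^{-1}(v)|=\sum_i\epsilon_i d_i$. From there the paper proceeds by a greedy combinatorial extraction: the largest value of $(n-k)-k$ appearing identifies $\sum d_i$, the next-largest identifies the smallest nonzero $d_i$ together with its multiplicity (read off from the coefficient), and one then subtracts the known contributions and iterates. Your approach instead packages all the length-$2$ coefficients into the Laurent polynomial $Q(q)=|E(G)|\prod_i(q^{d_i}+q^{-d_i})$, normalizes by the leading coefficient to obtain $\til P(q)=\prod_{d_i>0}(q^{d_i}+q^{-d_i})$, and then recovers the multiset by a unique-factorization argument using $\Phi_{4d^\star}$.

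What your approach buys is a clean algebraic statement: the entire length-$2$ spectrum of $X_G^G$ is a single product, and the recovery problem becomes a factorization problem in $\mathbb{Z}[q]$, which you solve once and for all. What the paper's approach buys is that it stays entirely elementary---no cyclotomic polynomials are needed, only a comparison of the two largest exponents at each stage. Indeed, once you have $\til P(q)$ in hand you could also finish the paper's way: the gap between the top two exponents of $\til P$ is $2d_{\min}$ and the subleading coefficient is its multiplicity, so one can peel from the bottom instead of the top. Your cyclotomic argument is correct as written (the key point that $\Phi_{4d^\star}$ divides $q^{2d}+1$ only when $d^\star\mid d$ is exactly right), and the edgeless case is handled correctly via $[m_n^n]X_G^G\neq 0$; you may want to remark explicitly that in the non-edgeless case $d^\star$ is \emph{found} as the largest $d$ with $\Phi_{4d}\mid P^\star$, rather than assumed known, but this is implicit in your inductive uniqueness argument.
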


\begin{proof}
    Suppose the $i^{\text{th}}$ connected component has bipartition $V_i=V_{k_i}\sqcup V_{\ell_i}$, with $|V_{k_i}|=k_i\le |V_{\ell_i}|=\ell_i,$ and let $d_i=\ell_i-k_i.$ The claim is that the set of nonzero $d_i$ values can be recovered from $X_G^G$ (although we cannot necessarily learn how many $d_i$'s are equal to 0). Suppose $d_1 < d_2 <\dots < d_j$ are the nonzero differences that show up, listed in nondecreasing order, and let $c_i$ be the number of components with difference $d_i$.

    Consider the length 2 monomials $m_{n-k,k}^n$ that show up in $X_G^G$. Each of these corresponds to mapping all the edges of $G$ onto a single edge $v_1v_2$, but for each connected component $V_i$, we can choose which part of its bipartition maps onto $v_1$, and which part maps onto $v_2.$ Thus, the difference between $n-k$ and $k$ must be some sum of the form $\pm n_1 d_1 \pm n_2 d_2 \dots \pm n_j d_j,$ with $0\le n_i \le c_i.$ By finding the largest value of $n-k$ such that the monomial $m_{n-k,k}^n$ shows up, we can thus learn the value of $S=d_1+d_2+\dots+d_j$. The number of times this monomial shows up is $|E(G)|\cdot 2^{e+1},$ where $e$ is the number of connected components where the parts are equal in size, because there are $|E(G)|$ ways to choose which edge to map all the vertices onto, and then for each component with equal part sizes, we can map either of its parts to either of $v_1$ and $v_2,$ while for all the components with unequal part sizes, we must map the larger parts all to $v_1$ or all to $v_2$. We can thus learn the value of $|E(G)|\cdot 2^{e+1}$, although we cannot necessarily learn $e$ and $|E(G)|$ themselves.

    The next largest value of $n-k$ will correspond to $(n-k)-k=S-2d_1,$ since $d_1$ is the smallest difference, so the next farthest apart $n-k$ and $k$ can be is if we choose a component with difference $d_1$ and swap which vertex its larger part maps onto. Thus, we can learn the value of $d_1$. However, we can also learn how many copies of $d_1$ there are from the coefficient of this monomial. If there are $c_1$ copies, the coefficient is $|E(G)|\cdot 2^{e+1}\cdot c_1$, because again we must choose an edge to map all the vertices onto, then for each component with equal part sizes, we can independently choose which endpoint of the edge to match each of its parts to, while for the components with different part sizes, we must map all their larger parts to the same endpoint, except we must choose one of the $c_1$ components with difference $d_1$ to map to the opposite endpoint. Thus, we can learn the values of $c_1$ and $d_1.$

    If $c_1>1,$ we will also get length 2 monomials corresponding to the differences $S-2n_1d_1$ for each $2\le n_1 \le c_1$, by mapping exactly $n_1$ of the larger parts that are $d_1$ vertices larger than their smaller parts onto the opposite endpoint from all the other larger parts. However, we can count exactly how many such monomials we get, because we know the value of $c_1$, and there will be $|E(G)|\cdot 2^{e+1}\cdot \binom{c_1}{n_1}$ such monomials. Then if we eliminate those length 2 monomials and look only at the remaining ones, the largest remaining difference $(n-k)-k$ must equal the next smallest difference $d_2$, so we can learn $d_2$, and also $c_2$ from the coefficient of that monomial. 
    
    Continuing in this manner, once we know the values $d_1,d_2,\dots,d_i$ and $c_1,c_2,\dots,c_i$ for each $i$, we can count exactly how many length 2 monomial terms we get of each size from assigning some subset of the larger parts corresponding to differences among $d_1,d_2,\dots,d_i$ to one endpoint of the edge, and all other larger parts to the opposite endpoint. Then we can remove all those monomials from the sum, and recursively learn $d_{i+1}$ and $c_{i+1}$ from looking at the largest remaining difference in part sizes that shows up in a length 2 monomial, since that difference will be $S-2d_i,$ and the coefficient will be $|E(G)|\cdot 2^{e+1}\cdot c_i.$ Thus, by induction, we can learn the full sequence of nonzero differences.
\end{proof}

\subsection{Self-CSFs for trees}\label{subsec:self-CSF_trees}

Now we focus in on self-CSFs of trees. The authors of \cite{EFHKY22} conjectured that the self-CSF distinguishes all trees from each other, i.e. $X_{T_1}^{T_1}\ne X_{T_2}^{T_2}$ whenever $T_1$ and $T_2$ are nonisomorphic trees. They verified this conjecture for trees on up to 9 vertices, and we check that it also holds for trees up to 12 vertices:

\begin{prop}\label{prop:12-vertex-trees}
    If $T_1$ and $T_2$ are nonisomorphic trees with up to 12 vertices, then $X_{T_1}^{T_1}\ne X_{T_2}^{T_2}.$
\end{prop}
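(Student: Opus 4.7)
The claim is a finite computational verification, so my plan is a computer-assisted check rather than a structural argument. First I would enumerate all non-isomorphic trees on $n$ vertices for each $3\le n\le 12$ (for $n=12$ there are 551 such trees, and the total count up to $n=12$ is well under 1000, so exhaustive comparison is feasible). I would use Sage's built-in tree-generation routines to produce canonical representatives, grouped by $n$ since $X_T^T$ already detects $|V(T)|$.

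Next, for each tree $T$ I would compute $X_T^T$ and store it as the vector of coefficients $(d_\lam)_{\lam\vdash n}$ in the expansion $X_T^T=\sum_{\lam\vdash n} d_\lam m_\lam^n$ from the authors' formula. To compute $d_\lam$, I would enumerate all graph homomorphisms $f:T\to T$ and tally the type $\lam(f)$ given by the multiset of preimage sizes. Since $T$ is a tree, homomorphism enumeration is efficient: root $T$ at a fixed vertex $r$, then for each choice of image $f(r)=w\in V(T)$, backtrack down the rooted tree, at each step choosing the image of a child from among the neighbors in $T$ of the image of its parent. Each homomorphism is enumerated exactly once, and with $n\le 12$ and maximum degree typically small, the total enumeration is very fast in practice. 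Only the type $\lam(f)$ needs to be retained, so the output per tree is just a small integer vector indexed by partitions of $n$ (for $n=12$ there are 77 partitions).

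Finally, I would compare the resulting tuples pairwise within each vertex count $n$ and confirm that they are all distinct, giving the proposition. Two sanity checks are worth running alongside: the $m_{1^n}^n$ coefficient should equal $|\Aut(T)|$ (which can be verified against Sage's automorphism-group size), and the entries should sum to the total number of endomorphisms of $T$, giving a cross-check against an independent Sage computation.

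The only real obstacle is the implementation rather than the mathematics. The two things to be careful about are (i) producing one representative per isomorphism class (handled by Sage's canonical labeling), and (ii) enumerating homomorphisms correctly — in particular, homomorphisms need not be injective or surjective, so the straightforward backtracking described above (as opposed to enumerating automorphisms) must be used. Given the small size of the problem, no clever pruning is needed; a direct implementation completes well within reasonable time, and the distinctness of all resulting $(d_\lam)$-vectors yields the conclusion.
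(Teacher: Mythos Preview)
Your plan is essentially the same computer-assisted strategy the paper uses: enumerate trees, enumerate endomorphisms by backtracking from a chosen root image, and compare the resulting $(d_\lam)$-vectors. The paper's code likewise runs in $O(|\tn{End}(T)|)$ time.

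Where your proposal diverges from the paper is in the assessment of feasibility. You assert that ``with $n\le 12$ and maximum degree typically small, the total enumeration is very fast in practice'' and that ``a direct implementation completes well within reasonable time.'' The paper reports the opposite: a degree-$d$ vertex contributes at least $d^d$ endomorphisms, so trees containing a vertex of degree $8$ or more (and there are several such $12$-vertex trees, the star $S_{12}$ being the extreme with $|\tn{End}|>11^{11}$) caused their Sage computation to time out. To handle these timed-out cases, the authors fell back on cheaper invariants that are provably determined by the self-CSF --- the automorphism count $[m_{1^n}^n]$ and the degree-squared sum from Proposition~\ref{prop:bipartite_degrees} --- and these sufficed to separate all but one pair of $12$-vertex spiders, which they then distinguished by hand using the spider-specific formula in Proposition~\ref{prop:spider_m21n-2} for the ratio $[m^{12}_{2,1^{10}}]/[m^{12}_{1^{12}}]$.

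So the mathematical content of your plan is sound and matches the paper's, but the claim that brute force alone settles all $551$ twelve-vertex trees is exactly what the authors could not make work in Sage. You should either justify that a faster implementation (say in a compiled language) really does finish, or adopt the paper's hybrid strategy of using auxiliary self-CSF invariants for the high-degree cases.
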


\begin{proof}
    We were able to use Sage \cite{sage} to check all 10 to 12-vertex trees except for some of the ones where the computation timed out. Our program computes $X_T^T$ in $O(|\tn{End}(T)|)$ time, and the presence of a degree $d$ vertex in $T$ would introduce at least $d^d$ endomorphisms, thus many trees containing a high-degree vertex (usually $d\geq8$) would lead to a timeout. In light of this, we also checked the sizes of the automorphism groups (i.e. the coefficient of $m^n_{1^n}$) and the degree-squared sums of all the trees (which by Proposition \ref{prop:bipartite_degrees} is a quantity we can deduce from the self-CSF provided the tree's bipartition does not contain a part of size 1, but we can always recognize such a case since the only trees containing a part of size 1 are the stars, and stars are the unique trees with the maximal number of automorphisms amongst all trees with the same number of vertices), and these two pieces of information was able to distinguish all but two of the trees (pictured below) whose self-CSF computations timed out.
    $$T_1=\vcenter{\hbox{\begin{tikzpicture}
    \graph[nodes={draw, circle, fill=black, inner sep=2pt}, empty nodes, no placement, edges={thick}]{a[x=-2,y=0]--b[x=-1,y=0]--c[x=0,y=0]--d[x=1,y=0]--e[x=2,y=0]--f[x=3,y=0];g[x=-0.7071,y=0.7071]--c--h[x=0,y=1];i[x=0.7071,y=0.7071]--c--j[x=0.7071,y=-0.7071];k[x=0,y=-1]--c--l[x=-0.7071,y=-0.7071]};
    \end{tikzpicture}}}\,,\hspace{30pt}
    T_2=\vcenter{\hbox{\begin{tikzpicture}
    \graph[nodes={draw, circle, fill=black, inner sep=2pt}, empty nodes, no placement, edges={thick}]{a[x=-2,y=0]--b[x=-1,y=0]--c[x=0,y=0]--d[x=1,y=0];e[x=1.4142,y=1.4142]--f[x=0.7071,y=0.7071]--c--g[x=0.7071,y=-0.7071]--h[x=1.4142,y=-1.4142];i[x=0,y=1]--c--j[x=0,y=-1];k[x=-0.7071,y=0.7071]--c--l[x=-0.7071,y=-0.7071]};
    \end{tikzpicture}}}$$
    These two 12-vertex trees have the same degree sequence, so computing further power sums of the degrees will not distinguish them. We were able to verify that no other 12-vertex trees share the same number of automorphisms and degree-squared sum as these two trees, so it remains to distinguish $T_1$ from $T_2$. We were able to do this by manually computing the coefficient ratio $[m^{12}_{2,1^{10}}]/[m^{12}_{1^{12}}]$ for these two trees by utilizing Proposition \ref{prop:spider_m21n-2}, made possible by the fact that both $T_1$ and $T_2$ are spiders. Indeed, this ratio is 36 for $T_1$ and 43 for $T_2$.
\end{proof}

Another question one can ask is whether or not the self-CSF can distinguish trees from non-trees, and it turns out the answer is generally yes. In fact, the $X_{P_3}^{P_3} = X_{K_1\sqcup K_2}^{K_2\sqcup K_2}$ example actually turns out to be the \emph{only} case where the self-CSF fails to detect whether or not the graph is a tree:

\begin{prop}\label{prop:detect_trees}
    If $T$ is a tree and $X_T^T = X_G^G$, then $G$ is also a tree unless $T=P_3$ and $G=K_1\sqcup K_2.$
\end{prop}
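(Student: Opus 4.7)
The plan is to analyze the length-2 monomials $m_{n-k,k}^n$ in $X_G^G$ and match them against those of $X_T^T$, where $(k, n-k)$ with $k \le n-k$ denotes the bipartition of $T$ and $d_T := n - 2k$. Since any type-$(a,b)$ endomorphism of $G$ forces both preimages to be independent sets, the fact that $X_G^G = X_T^T$ contains a length-2 monomial forces $G$ to be bipartite. Applying Proposition \ref{prop:diff_sequences} then shows that the multiset of nonzero differences between the two bipartition part sizes across the connected components of $G$ equals that of $T$: the singleton $\{d_T\}$ if $d_T > 0$, and the empty multiset if $d_T = 0$. So $G$ has at most one connected component with unequal-sized bipartition parts (necessarily with difference $d_T$), while every remaining component has equal parts and hence at least two vertices.

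Next I would compute the coefficient of $m_{n-k,k}^n$ in both self-CSFs and equate. For $d_T > 0$, the proof of Proposition \ref{prop:diff_sequences} gives the coefficient in $X_G^G$ as $|E(G)| \cdot 2^{e+1}$, where $e$ is the number of equal-parts components (one picks the image edge, picks which endpoint receives the larger preimage, and independently swaps the bipartition parts inside each equal-parts component). Since the corresponding coefficient in $X_T^T$ is $2(n-1)$, we obtain $|E(G)| = (n-1)/2^e$. On the other hand, since each of the $e + 1$ connected bipartite components contributes at least $|V_i| - 1$ edges, $|E(G)| \ge n - (e+1)$. Combining these with the vertex lower bound $n \ge 2e + 1$ (the unequal-parts component has at least one vertex, and each equal-parts component has at least two) traps the case $e \ge 1$ into the single possibility $(n, e) = (3, 1)$, which in turn forces $G = K_1 \sqcup K_2$ and $T = P_3$ --- exactly the exception. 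When $e = 0$, we have $|E(G)| = n - 1$ with $G$ connected, so $G$ is a tree.

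The $d_T = 0$ case is easier: all components of $G$ have equal-sized bipartition parts, and a direct count gives the coefficient of $m_{n/2, n/2}^n$ in $X_G^G$ as $|E(G)| \cdot 2^c$, where $c$ is the total number of components. Matching with the value $2(n-1)$ in $X_T^T$ and combining with the bound $|E(G)| \ge n - c$ forces $c = 1$, so $G$ is a connected bipartite graph with $n - 1$ edges, hence a tree. The main obstacle is to execute the coefficient count carefully (the extra factor of $2$ in the $d_T > 0$ formula versus the $d_T = 0$ formula comes from a choice of ``which side receives the larger preimage'' that degenerates when all bipartition parts are equal-sized) and to verify that the small-case analysis in the $d_T > 0$ regime really isolates $(T, G) = (P_3, K_1 \sqcup K_2)$ as the sole exception, with no other small-$n$ loopholes.
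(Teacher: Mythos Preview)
Your proposal is correct and follows essentially the same route as the paper: both arguments compute the coefficient of the unique length-2 monomial $m_{n-k,k}^n$ in $X_G^G$ (which comes out to $|E(G)|\cdot 2^{\kappa(G)}$) and equate it with the value $2(n-1)$ coming from $X_T^T$, then squeeze the resulting identity against $|E(G)|\ge n-\kappa(G)$. The only differences are bookkeeping --- you invoke Proposition~\ref{prop:diff_sequences} to structure the components of $G$ and close via the vertex bound $n\ge 2e+1$ (resp.\ $n\ge 2c$), whereas the paper closes via the integrality of $n=\ell+(\ell-1)/(2^{\kappa-1}-1)$ and handles the connected case separately through Proposition~\ref{prop:bipartite_degrees}, which your $e=0$/$c=1$ sub-cases absorb.
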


\begin{proof}
    Suppose $T$ is a tree and $X_T^T = X_G^G$. From \cite{EFHKY22}, $G$ must be bipartite, or else $X_G^G$ would not have a length 2 monomial. Also, $|V(T)|=|V(G)|$, since $X_T^T$ and $X_G^G$ have the same degree. If $G$ is connected, then by Proposition \ref{prop:bipartite_degrees}, the sums of the degrees must match, so $$2\cdot |E(T)|=\sum_{v\in V(T)}\deg(v) = \sum_{v\in V(G)}\deg (v)=2\cdot |E(G)|.$$ Since $T$ is a tree, we get $|E(G)|=|E(T)|=|V(T)|-1=|V(G)|-1$. But then $G$ must also be a tree, since it is a connected graph with one fewer edge than it has vertices.

    Thus, assume $G$ is disconnected. The unique length 2 monomial appearing in $X_T^T$ must also be the only length 2 monomial appearing in $X_G^G$. This implies that $G$ cannot be the edgeless graph $\ol{K_n}$ unless $n=|V(T)|=|V(G)|\le 3$, since every length monomial $m_{n-k,k}^n$ shows up in $\ol{K_n}$. If $n=2,$ the only case to consider is $X_{K_2}^{K_2}$ and $X_{\ol{K_2}}^{\ol{K_2}}$, which are not equal because $X_{\ol{K_2}}^{\ol{K_2}}$ contains an $m_2^2$ term while $X_{K_2}^{K_2}$ does not, because for $\ol{K_2}$, both vertices can be mapped to the same vertex since there is no edge between them, while for $K_2$, they cannot because the edge needs to map to an edge. 
    
    If $n=3,$ we already know that $X_{P_3}^{P_3} = X_{K_1\sqcup K_2}^{K_2\sqcup K_2}$, and we can also check that $X_{P_3}^{P_3}\ne X_{\ol{K_3}}^{\ol{K_3}}$, because $X_{\ol{K_3}}^{\ol{K_3}}$ contains an $m_3^3$ term while $X_{P_3}^{P_3}$ does not, since in $\ol{K_3}$, the 3 vertices can all map onto one vertex, while in $P_3,$ they cannot.
    
    Thus, suppose $n\ge 4,$ and let the unique length 2 monomial in $X_T^T=X_G^G$ be $m_{n-k,k}^n$. Then $$[m_{n-k,k}^n]X_T^T = |E(T)|\cdot 2,$$ since this is the number of ways to choose an edge to map all the edges of $T$ onto, and then to choose which endpoint of that edge to map the vertices in one of the parts of the bipartition of $T$ onto (with the vertices in the other part of the bipartition then mapping onto the other endpoint). Also, $$[m_{n-k,k}^n]X_G^G=|E(G)|\cdot 2^{\kappa(G)},$$ where $\kappa(G)$ is the number of connected components of $G$, because all the edges of $G$ must map onto the same edge, and then for each connected component of size 2 or more, we have 2 choices to choose which endpoint of that edge to map each part of its bipartition onto, while for connected components of size 1, we also have 2 choices for which endpoint of the edge to map the lone vertex onto. Since $|E(T)|=n-1$, we get $$(n-1)\cdot 2 = |E(G)|\cdot 2^{\kappa(G)}.$$ Since $\kappa(G)>1,$ we must have $|E(G)|<n-1,$ so let $|E(G)|=n-\ell,$ with $\ell>1.$ Plugging that in and dividing by 2 gives $$n-1 = (n-\ell)\cdot 2^{\kappa(G)-1}.$$ Solving for $n$ gives $$\ell\cdot 2^{\kappa(G)-1}-1=n\cdot(2^{\kappa(G)-1}-1) \implies n = \frac{\ell\cdot 2^{\kappa(G)-1}-1}{2^{\kappa(G)-1}-1}=\ell + \frac{\ell-1}{2^{\kappa(G)-1}-1}.$$ Since $n$ is an integer, we must have $(2^{\kappa(G)-1}-1)\mid (\ell-1)$, so since $\ell>1,$ $$2^{\kappa(G)-1}-1\le \ell-1 \implies 2^{\kappa(G)-1}\le \ell.$$ But we must also have $n-\ell+\kappa(G)\ge n$, since if we start with the edgeless graph $\ol{K_n}$ and add the $n-\ell$ edges one by one, each additional edge can decrease the number of connected components by at most one, so the number of edges plus the number of connected components is at least $n$. Thus, $\ell \le \kappa(G)$, so $2^{\kappa(G)-1}\le \ell\le \kappa(G).$ This can only hold if $\kappa(G)\le 2$, since $2^{\kappa(G)-1}>\kappa(G)$ if $\kappa(G)\ge 3$. If $\kappa(G)=1$, then $G$ is a tree and we are done. If $\kappa(G)=2,$ then $2^{\kappa(G)-1}=2^{2-1}=2$, so we must also have $\ell=2$. Then solving for $n$ gives $$n = \ell + \frac{\ell-1}{2^{\kappa(G)-1}-1} = 2 + \frac{2-1}{2^{2-1}-1} = 3,$$ which takes us back to the case $G=P_3$ and $G=K_1\sqcup K_2.$ Thus, in all other cases, $G$ must be a tree.
\end{proof}

Next, we consider what information about a tree we can learn from some of its other self-CSF coefficients. First we give a general interpretation of the coefficients of any $H$-CSF, which will help us recover some information related to the leaves in the case of trees:

\begin{prop}\label{prop:coeff_in_general}
    For a set partition $\pi$ of $V(G)$, let $\tn{type}(\lam)$ be the integer partition of $|V(G)|$ denoting the size of its parts, and let $G/\pi$ denote the graph obtained by identifying vertices in $G$ whenever they come from the same part of $\pi$. Then $$[m^n_\lam]X_G^H=\sum_{\substack{\pi\vdash V(G)\\\tn{type}(\pi)=\lam\\}}S(G/\pi,H)\big|\tn{Aut}(G/\pi)\big|$$ where $S(G/\pi,H)$, defined similarly to in Proposition 3.21 of \cite{EFHKY22}, is the number of subgraphs of $H$ isomorphic to $G/\pi$.
\end{prop}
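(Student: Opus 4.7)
The plan is to directly count the homomorphisms contributing to $[m^n_\lam]X_G^H$ by grouping them according to the set partition of $V(G)$ they induce. Recall from the formula $X_G^H=\sum_\mu d_\mu m_\mu^{|V(H)|}$ of \cite{EFHKY22} that $[m^n_\lam]X_G^H=d_\lam$ is simply the number of graph homomorphisms $f:G\to H$ of type $\lam$, so the task reduces to enumerating this set in the right way.

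To each homomorphism $f:G\to H$ I would associate the set partition $\pi_f\vdash V(G)$ whose blocks are the nonempty fibers $f^{-1}(w)$ for $w\in f(V(G))$. By construction, $\tn{type}(\pi_f)=\lam$ exactly when $f$ has type $\lam$, and $f$ factors canonically as the composition of the quotient map $G\twoheadrightarrow G/\pi_f$ with an injective homomorphism $\bar f:G/\pi_f\hookrightarrow H$: injectivity is forced by the definition of $\pi_f$, and the homomorphism property is inherited from $f$. Conversely, a set partition $\pi\vdash V(G)$ of type $\lam$ together with an injective homomorphism $\iota:G/\pi\hookrightarrow H$ produces a homomorphism $G\to H$ of type $\lam$ by composing with the quotient map, and these two constructions are mutually inverse. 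This gives
$$d_\lam=\sum_{\substack{\pi\vdash V(G)\\\tn{type}(\pi)=\lam}}\left|\{\text{injective homomorphisms }G/\pi\hookrightarrow H\}\right|.$$

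The remaining step is to rewrite the count of injective homomorphisms as $S(G/\pi,H)\cdot|\tn{Aut}(G/\pi)|$ by a standard orbit-counting argument: $\tn{Aut}(G/\pi)$ acts freely on the set of injective homomorphisms $G/\pi\hookrightarrow H$ by precomposition, and two such homomorphisms have the same image as a subgraph of $H$ if and only if they lie in the same orbit. Hence the orbit space is in bijection with the $S(G/\pi,H)$ subgraphs of $H$ isomorphic to $G/\pi$, and substituting into the previous display yields the claimed formula.

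The one subtlety I expect to address carefully is the treatment of partitions $\pi$ for which $G/\pi$ fails to be simple: it picks up a loop as soon as $\pi$ identifies two adjacent vertices of $G$, or a multi-edge when it collapses two disjoint adjacent pairs. Provided $H$ is simple, any such quotient admits no injective homomorphism into $H$, so $S(G/\pi,H)=0$ and the corresponding term drops out automatically; hence the sum can be left over all partitions of $V(G)$ of type $\lam$ without further restriction, and only minor bookkeeping is needed if one wants the statement to include target graphs $H$ that may carry loops or multi-edges.
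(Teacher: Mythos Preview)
Your proposal is correct and follows essentially the same approach as the paper: factor each type-$\lam$ homomorphism through the quotient $G/\pi$ induced by its fibers, then count injective homomorphisms $G/\pi\hookrightarrow H$ as $S(G/\pi,H)\cdot|\tn{Aut}(G/\pi)|$. The paper defers this last count to Proposition~3.21 of \cite{EFHKY22}, whereas you spell out the orbit-counting argument directly and also flag the non-simple-quotient subtlety; both are purely expository additions to the same underlying argument.
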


\begin{proof}
    Recall that $[m^n_\lam]X_G^H$ counts the elements of $\tn{Hom}(G,H)$ of type $\lam$. Observe that $f\in\tn{Hom}(G,H)$ has type $\lam$ if and only if there exists $\pi\vdash V(G)$ of type $\lam$ such that $f=\psi\circ g_\pi$ where:
    \begin{itemize}
        \item $g_\pi$ is the map $G\rightarrow G/\pi$ obtained by identifying the required vertices, and
        \item $\psi:G/\pi\rightarrow H$ is an embedding (injective homomorphism).
    \end{itemize}
    By Proposition 3.21 of \cite{EFHKY22}, the number of embeddings $\psi:G/\pi\rightarrow H$ is $S(G/\pi,H)\big|\tn{Aut}(G/\pi)\big|$. Summing over all possibilities of $\pi$ (or equivalently of $g_\pi$), the result follows.
\end{proof}

\begin{remark}
    Proposition 3.21 of \cite{EFHKY22} is the special case of the above with $\lam=1^{|V(G)|}$. Indeed, there is only one such $\pi$ in this case, for which $G/\pi=G$.
\end{remark}

For some special cases, there are nice necessary and/or sufficient conditions for $S(G/\pi,H)=0$ that allows us to be more specific about the sum above. For instance, going back to the setting of self-CSFs of trees, we obtain the following:

\begin{lemma}\label{lem:m21n-2_XTT}
    Let $T$ be a tree with $n$ vertices and let $T/uw$ denote the graph obtained by identifying the vertices $u$ and $w$ from $T$. Then $$[m^n_{2,1^{n-2}}]X_T^T=\sum_{\substack{\{u,w\}\subset V(T)\\d(u,w)=2}}S(T/uw,T)\big|\tn{Aut}(T/uw)\big|,$$ where $d(u,w)$ is the distance between $u$ and $w$, i.e. the minimum length of a path from $u$ to $w$ in $T$.
\end{lemma}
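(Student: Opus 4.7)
The plan is to apply Proposition \ref{prop:coeff_in_general} with $G = H = T$ and $\lam = (2, 1^{n-2})$, and then show that only set partitions whose unique two-element block $\{u, w\}$ satisfies $d(u,w) = 2$ contribute nonzero terms. A set partition $\pi \vdash V(T)$ of type $(2, 1^{n-2})$ is determined by specifying its unique non-singleton block $\{u, w\}$ (with all other blocks being singletons), so $T/\pi = T/uw$, and Proposition \ref{prop:coeff_in_general} rewrites $[m^n_{2,1^{n-2}}]X_T^T$ as a sum over unordered pairs $\{u, w\} \subset V(T)$. It therefore suffices to show that $S(T/uw, T) = 0$ whenever $d(u,w) \ne 2$.

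I would handle the two offending cases separately. If $d(u,w) = 1$, then $u$ and $w$ are adjacent in $T$, so the edge $uw$ becomes a loop in $T/uw$; since $T$ itself is loopless, no subgraph of $T$ is isomorphic to $T/uw$, giving $S(T/uw, T) = 0$. If $d(u,w) \ge 3$, the unique $u$-$w$ path in $T$ has length $d(u,w) \ge 3$ and becomes a cycle of that same length in $T/uw$; since $T$ is acyclic, this quotient cannot embed as a subgraph of $T$, and again $S(T/uw, T) = 0$.

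For the surviving case $d(u,w) = 2$, the unique $u$-$w$ path has the form $u - v - w$, and the two edges $uv$ and $vw$ collapse to a single edge in $T/uw$ joining $v$ to the merged vertex. It is worth verifying that $T/uw$ is genuinely a simple graph: any second common neighbor of $u$ and $w$ would produce a $4$-cycle in $T$, so the tree hypothesis forces $u$ and $w$ to have exactly one common neighbor, and no multi-edges appear (so $T/uw$ is itself a tree on $n-1$ vertices). The main obstacle is really just this bookkeeping --- correctly classifying which quotients $T/uw$ fail to embed in $T$ and using the tree hypothesis to rule out loops, cycles, and multi-edges --- after which the claimed identity follows directly from Proposition \ref{prop:coeff_in_general}.
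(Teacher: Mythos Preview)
Your proposal is correct and follows essentially the same route as the paper: both invoke Proposition~\ref{prop:coeff_in_general}, identify the type-$(2,1^{n-2})$ partitions with unordered pairs $\{u,w\}$, and eliminate the cases $d(u,w)=1$ (loop) and $d(u,w)\ge 3$ (cycle) so that only $d(u,w)=2$ survives. Your explicit check that $T/uw$ is simple when $d(u,w)=2$ (ruling out a second common neighbor via acyclicity) is a nice bit of hygiene that the paper leaves implicit.
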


\begin{proof}
    Let $f\in\tn{End}(T)=\tn{Hom}(T,T)$. For $f(T)$ to have type $(2,1^{n-2})$, two distinct vertices, say $u$ and $w$, need to map to the same vertex through $f$, with $f$ otherwise being injective. As such, choosing $\pi\vdash V(G)$ such that $\tn{type}(\pi)=(2,1^{n-2})$ is equivalent to choosing $u$ and $w$. By Proposition \ref{prop:coeff_in_general}, it thus suffices to show that $d(u,w)=2$ is a necessary condition for there to exist valid endomorphisms $f$ of $T$ (i.e. to show that $S(T/uw,T)=0$ whenever $d(u,w)\neq2$). Indeed, $d(u,w)$ cannot be 1, since adjacent vertices cannot map onto the same vertex. On the other hand, $d(u,w)$ cannot exceed 2, for if otherwise, the path from $u$ to $w$ would map to a cycle of length $d(u,w)$ under $f$, which is impossible since $f(T)$ must also be a tree.
\end{proof}
In fact, we can be even more specific. Besides the distance 2 requirement between $u$ and $w$, we can also say a lot about what their degrees have to be, as well as the degree of their common neighbor:

\begin{prop}\label{prop:m21n-2_XTT}
    Let $T$ be an $n$-vertex tree and $D=L\sqcup J$, where $L=\{(1,a,b),(b,a,1)\mid a,b\in\mathbb N,\:a\geq2\}$ and $J=\{(k,k+1,2),(2,k+1,k)\mid k\in\mathbb N,\:k\geq2\}$. Then $$[m^n_{2,1^{n-2}}]X_T^T=\sum_{\substack{\{u,v,w\}\subset V(T)\\d(u,v)=d(v,w)=1\\(\deg(u),\deg(v),\deg(w))\in D}}S(T/uw,T)\big|\tn{Aut}(T/uw)\big|.$$
\end{prop}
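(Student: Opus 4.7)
The plan is to combine Lemma~\ref{lem:m21n-2_XTT} with a degree-sequence argument showing that $S(T/uw, T) = 0$ whenever $(\deg(u), \deg(v), \deg(w)) \notin D$. Since the lemma already sums over distance-2 pairs $\{u, w\}$ (whose common neighbor $v$ is unique in a tree), the content of the proposition is precisely this vanishing.

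First I would reduce subgraph-counting to leaf-deletion: since $T/uw$ is a tree on $n-1$ vertices and $n-2$ edges, an easy vertex/edge count shows that any subgraph of $T$ isomorphic to $T/uw$ must be of the form $T - \ell$ for some leaf $\ell$, so $S(T/uw, T) > 0$ if and only if $T/uw \cong T - \ell$ for some leaf $\ell$ of $T$. The cases in $L$ are then immediate: if $\deg(u) = 1$, the map sending $[uw] \mapsto w$ and fixing all other vertices is an explicit isomorphism $T/uw \cong T - u$, because $u$'s unique edge $uv$ becomes redundant with $wv$ after contraction (the case $\deg(w) = 1$ is symmetric). Every triple with $\min(\deg(u), \deg(w)) = 1$ therefore lies in $L$, using $\deg(v) \geq 2$.

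For the remaining case $\deg(u), \deg(w) \geq 2$, suppose $T/uw \cong T - \ell$ for some leaf $\ell$ with parent $p$. Expressing each of the degree multisets of $T/uw$ and $T - \ell$ as a modification of the degree multiset of $T$ and equating yields the four-element multiset identity
\[
\{\deg(u)+\deg(w)-1,\ \deg(v)-1,\ 1,\ \deg(p)\} = \{\deg(u),\ \deg(v),\ \deg(w),\ \deg(p)-1\}.
\]
Since $\deg(u), \deg(v), \deg(w), \deg(p) \geq 2$ (using $n \geq 3$), the ``$1$'' on the left must match $\deg(p)-1$, forcing $\deg(p) = 2$. Cancelling reduces to $\{\deg(u)+\deg(w)-1,\ \deg(v)-1,\ 2\} = \{\deg(u),\ \deg(v),\ \deg(w)\}$, and a short case split on which of $\deg(u), \deg(v), \deg(w)$ equals $2$ shows that either $\deg(u) = 2$ with $\deg(v) = \deg(w)+1$, or $\deg(w) = 2$ with $\deg(v) = \deg(u)+1$ --- both placing the triple in $J$ --- while $\deg(v) = 2$ yields the impossible requirement $1 \in \{\deg(u), \deg(w)\}$. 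Taking the contrapositive gives $S(T/uw, T) = 0$ for any triple outside $D$.

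The main delicate point is ensuring the multiset identity remains valid when $p$ coincides with one of $u, v, w$: since it is stated purely in terms of four (possibly repeated) degree values, the identity and its case analysis apply uniformly in all such subcases, with $p = v$ automatically excluded by the contradiction above and $p \in \{u, w\}$ still landing in $J$.
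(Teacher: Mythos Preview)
Your proof is correct and follows essentially the same strategy as the paper: both start from Lemma~\ref{lem:m21n-2_XTT}, reduce to the observation that $S(T/uw,T)>0$ forces $T/uw\cong T-\ell$ for some leaf $\ell$, and then compare degree sequences. Your execution is a bit tidier---you split first on whether $\min(\deg u,\deg w)=1$ and then package the remaining case into a single four-element multiset identity whose analysis pins down $\deg(p)=2$ before a short case check, whereas the paper splits first on $d=\deg(p)$ and works through more elaborate set-containment conditions---but the underlying argument is the same.
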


\begin{proof}
Let $f\in\tn{End}(T)$ have type $(2,1^{n-1})$. Then the preimage under $f$ of one of the vertices, say $u'$, is empty, while another vertex, say $w'$, has preimage of size 2, whose elements we denote by $u$ and $w$. As established in Lemma \ref{lem:m21n-2_XTT}, $d(u,w)$ must equal 2, and therefore, they share a neighbor (which must be unique since $T$ is a tree) which we denote by $v$, and whose image under $f$ we denote by $v'$. Consider the subgraph of $T$ that includes all vertices but only the edges that are the image of an edge in $T$ under $f$. We see that in this subgraph:
\begin{itemize}
    \item $\deg(u')=0$ since $f^{-1}(u')=\emptyset$; 
    \item $\deg(v')=\deg(v)-1$ since both edges $uv$ and $vw$ get mapped to a single edge $v'w'$ and all other edges coming out of $v$ are unaffected; and
    \item $\deg(w')=\deg(u)+\deg(w)-1$ since the edges originally coming out of either $u$ or $w$ now come out of the same vertex, but the two edges that originally connected them to $v$ are now merged into one. 
\end{itemize}
Since $f(T)$ is a subgraph of $T$ with one less vertex and one less edge, we need the degree sequence of $f(T)$ to look like that of $T$, but with two vertices having their degrees lowered by 1, one of which must be lowered to zero since we are also removing a vertex (namely $u'$) from $T$ through the endomorphism. As such, $f(T)$ must be isomorphic to $T$ with a leaf (degree 1 vertex) removed, and the neighbor of that leaf must be the other vertex whose degree was lowered. Suppose this neighbor originally has degree $d$. Then the only changes to the degree sequence are $\{d,1\}\mapsto\{d-1,0\}$. Since $u$, $v$, and $w$ are the only vertices that might have their degrees changed through $f$, we need the move
$$\{\deg(u),\deg(v),\deg(w)\}\mapsto\{0,\deg(v)-1,\deg(u)+\deg(w)-1\}$$
to encompass these changes and these changes only. We now have two cases depending on the value of $d$. \\[8pt]
\tb{Case 1}: $d>2$. Then $d-1\neq1$, so we need the following (multi)set relations to hold:
\begin{itemize}
    \item [(i)] $\{d,1\}\subset\{\deg(u),\deg(v),\deg(w)\}$,
    \item [(ii)] $\{d-1,0\}\subset\{0,\deg(v)-1,\deg(u)+\deg(w)-1\}$, and
    \item [(iii)] $\{\deg(u),\deg(v),\deg(w)\}\setminus\{d,1\}=\{0,\deg(v)-1,\deg(u)+\deg(w)-1\}\setminus\{d-1,0\}$.
\end{itemize}
Since $v$ is a neighbor of at least two vertices (namely $u$ and $w$), we cannot have $\deg(v)=1$, so without loss of generality (no generality is lost because $u$ and $w$ play symmetric roles) suppose $\deg(u)=1$. Then conditions (i) and (ii) become the following:
\begin{itemize}
    \item [(a)] $d\in\{\deg(v),\deg(w)\}$, 
    \item [(b)] $d-1\in\{\deg(v)-1,\deg(w)\}$. 
\end{itemize}
These both hold if and only if $d=\deg(v)$, and we see that in all such cases, (iii) still holds no matter the values of $\deg(v)$ and $\deg(w)$ since the LHS and RHS both equal $\{\deg(w)\}$. Hence the only restriction we obtain from this case is $\deg(u)=1$, i.e. for one of the two vertices being identified to be a leaf. \\[8pt]
\tb{Case 2}: $d=2$. Then $d-1=1$, so only a single change, namely $d\mapsto0$, in the degree sequence is required for the set $\{d,1\}$ to map to $\{d-1,0\}$, though the case above is still a possibility. The new possibility requires:
\begin{itemize}
    \item [(i)] $2=d\in\{\deg(u),\deg(v),\deg(w)\}$,
    \item [(ii)] $0\in\{0,\deg(v)-1,\deg(u)+\deg(w)-1\}$ (which is automatic), and
    \item [(iii)] $\{\deg(u),\deg(v),\deg(w)\}\setminus\{d\}=\{0,\deg(v)-1,\deg(u)+\deg(w)-1\}\setminus\{0\}$ as multisets.
\end{itemize}
For (iii), notice that $\deg(v)-1$ must equal $\deg(u)$ or $\deg(w)$ since it cannot equal $\deg(v)$, so without loss of generality suppose that $\deg(v)-1=\deg(u)$. Condition (iii) thus reduces to
\begin{itemize}
    \item [(iii$'$)] $\{\deg(v),\deg(w)\}\setminus\{d\}=\{\deg(v)+\deg(w)-2\}$.
\end{itemize}
But regardless of whether $\deg(v)=d$ or $\deg(w)=d$, since $d=2$, (iii$'$) always holds. So to conclude, this edge case allows for the following extra possibilities for $(\{\deg(u),\deg(w)\},\deg(v))$:
\begin{itemize}
    \item $(\{1,\tn{anything}\},2)$, which is already covered in Case 1, so not a new possibility; and
    \item $(\{\deg(u),2\},\deg(u)+1)$ where $\deg(u)$ can be anything, which is a new possibility for any choice of $\deg(u)$ greater than 1.
\end{itemize}
Combining both cases, we see that the set of all possible values of $(\deg(u),\deg(v),\deg(w))$ is precisely the set $D$, with Case 1 corresponding to $L$ and Case 2 corresponding to $J$.
\end{proof}
\begin{cor}\label{cor:m21n-2_XTT}
    Let $T$ be a tree with $n$ vertices. If $\ell$ is a leaf of $T$, denote by $\tn{nei}(\ell)$ its unique neighbor and $T-\ell$ the subtree of $T$ obtained by deleting $\ell$ and the edge containing $\ell$ from $T$. Then, with $J$ defined as in Proposition \ref{prop:m21n-2_XTT}, we have: \begin{align*}[m^n_{2,1^{n-2}}]X_T^T&=\sum_{\substack{\{u,v,w\}\subset V(T)\\d(u,v)=d(v,w)=1\\(\deg(u),\deg(v),\deg(w))\in J}}S(T/uw,T)\big|\tn{Aut}(T/uw)\big|\\
    &+\bigg(\hspace{-2pt}\sum_{\substack{\ell\in V(T)\\\tn{deg}(\ell)=1}}\hspace{-4pt}(\tn{deg}(\tn{nei}(\ell))-1)S(T-\ell,T)\big|\tn{Aut}(T-\ell)\big|
    -\hspace{-10pt}\sum_{\substack{\{\ell_1,\ell_2\}\subset V(T)\\d(\ell_1,\ell_2)=2\\\tn{deg}(\ell_1)=\tn{deg}(\ell_2)=1}}\hspace{-10pt}S(T-\ell_1,T)\big|\tn{Aut}(T-\ell_1)\big|\bigg).\end{align*}
\end{cor}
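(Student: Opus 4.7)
The plan is to begin with Proposition \ref{prop:m21n-2_XTT} and split its sum according to the disjoint decomposition $D = L \sqcup J$. The summands with $(\deg(u), \deg(v), \deg(w)) \in J$ appear verbatim as the first sum on the right-hand side of the corollary, so the task reduces to rewriting the $L$-part in the claimed form.

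First I would note that $(\deg(u), \deg(v), \deg(w)) \in L$ is equivalent to the condition ``at least one of $u, w$ is a leaf,'' because the requirement $a \ge 2$ in the definition of $L$ is automatic from $v$ having the two distinct neighbors $u, w$. The structural fact I would then isolate is: if $u$ is a leaf with $\tn{nei}(u) = v$ and $w$ is another neighbor of $v$ (so $d(u,w) = 2$), then $T/uw \cong T - u$. Indeed, the unique edge incident to $u$ is $uv$, which under the identification becomes a duplicate of the already-present edge $wv$, so the quotient is just $T$ with the leaf $u$ and its pendant edge removed. In particular $S(T/uw, T)\,|\Aut(T/uw)| = S(T-u, T)\,|\Aut(T-u)|$, a value depending only on $u$, not on the chosen partner $w$.

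Next I would reparameterize the $L$-part by choosing the leaf $\ell$ first and then its partner $w$ among the $\deg(\tn{nei}(\ell)) - 1$ other neighbors of $\tn{nei}(\ell)$; this produces exactly the first expression inside the parentheses on the right-hand side of the corollary. This procedure counts each unordered pair $\{u,w\}$ with exactly one leaf precisely once, but counts each unordered pair of sibling leaves twice (once for each choice of which endpoint to call $\ell$). Since $T - \ell_1 \cong T - \ell_2$ whenever $\ell_1, \ell_2$ are sibling leaves, the over-counted summand is well-defined, and subtracting one copy per such pair gives exactly the correction term in the corollary.

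The main obstacle is essentially bookkeeping: verifying the isomorphism $T/uw \cong T - u$ carefully, and checking that the inclusion-exclusion count behaves as claimed in the symmetric $\deg(u) = \deg(w) = 1$ case. Once these two points are pinned down, the splitting $D = L \sqcup J$ together with the leaf reparameterization yields the identity directly.
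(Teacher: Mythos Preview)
Your proposal is correct and follows essentially the same approach as the paper's proof: split the sum from Proposition \ref{prop:m21n-2_XTT} along $D = L \sqcup J$, identify the $L$-part with a choice of leaf $\ell$ together with a grandneighbor via the isomorphism $T/uw \cong T-\ell$, and correct for the double-counting of sibling-leaf pairs. The observation that the constraint $a\ge 2$ in $L$ is automatic, and the explicit justification that $T-\ell_1\cong T-\ell_2$ makes the subtracted term well-defined, are good clarifying touches that the paper leaves implicit.
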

\begin{proof}
    By splitting $D$ into $L\sqcup J$, the sum in the statement of Proposition \ref{prop:m21n-2_XTT} can be split into the sums over $(\deg(u),\deg(v),\deg(w))\in L$ and $(\deg(u),\deg(v),\deg(w))\in J$ respectively, so it suffices to show that former corresponds to the terms in parentheses in this corollary. Indeed, a choice of $u$, $v$ and $w$ whose corresponding degrees belong to $L$ is equivalent to a choice of a leaf $\ell$ (since one of $\{u,w\}$ must have degree 1) and a choice of a grandneighbor thereof. For a fixed leaf $\ell$, it has one neighbor $\tn{nei}(\ell)$, who in turn has $\deg(\tn{nei}(\ell))$ neighbors, 1 of which is $\ell$ itself and the other $\deg(\tn{nei}(\ell))-1$ of which are the grandneighbors of $\ell$.  This procedure would double-count the case where said grandneighbor is also a leaf, so we correct it by deducting one instance thereof, hence the subtracted sum. The resulting tree $T/uw$ is isomorphic to $T-\ell$, so the result follows.
\end{proof}

\subsection{Self-distinguishability of spiders}\label{subsec:spiders}

Using the results above, we can now make some partial progress towards proving that the self-CSF distinguishes a family of trees known as spiders.

A \emph{\tb{\tcb{spider}}} is a tree containing exactly one vertex of degree 3 or more. Following the notation of \cite{MMW08}, a spider can be thought of as a union of edge-disjoint path graphs (known as the \emph{\tb{\tcb{legs}}}) joined at a common vertex $t$ (the \emph{\tb{\tcb{torso}}}). As such, an $n$-vertex spider can be uniquely identified by a partition $\lam\vdash n-1$ whose parts are the lengths of its legs, and we let $T_\lam$ denote this spider. In particular, since the torso must have degree 3 or more, we have $\ell(\lam)\geq3$.

\begin{prop}\label{prop:spider_legs}
    $X_{T_\lam}^{T_\lam}$ determines the number of legs $\ell(\lam)$ of a spider $T_\lam$.
\end{prop}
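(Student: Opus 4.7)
The plan is to recover $\ell(\lam)$ by pulling out just enough information about the degree sequence of $T_\lam$ from $X_{T_\lam}^{T_\lam}$ to exploit the rigidity of spider degree sequences. Specifically, a spider with $d = \ell(\lam)$ legs has exactly one torso vertex of degree $d$, exactly $d$ leaves (one at the end of each leg), and exactly $n-1-d$ degree-$2$ internal vertices (totalling $\sum_i (\lam_i-1)$ across the legs). Consequently, $\sum_{v \in V(T_\lam)} \deg(v)^2 = d^2 + d + 4(n-1-d) = d^2 - 3d + 4n - 4$, a quadratic in $d$ that I will invert.

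First I would read off $n = |V(T_\lam)|$ from the degree of the symmetric function and the bipartition sizes $\{k, n-k\}$ with $k \le n-k$ from the unique length-$2$ monomial $m^n_{n-k,k}$ appearing in $X_{T_\lam}^{T_\lam}$ (the bipartition being unique because $T_\lam$ is connected). Assume first that $k \ge 2$. Then Proposition \ref{prop:bipartite_degrees} lets me extract $S_2 := \sum_v \deg(v)^2$ from $X_{T_\lam}^{T_\lam}$. Solving $d^2 - 3d + 4n - 4 = S_2$ for $d \ge 3$ gives a unique solution (the other root of the quadratic lies at or below $3/2$ regardless of the discriminant), and this value must equal $\ell(\lam)$.

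The only edge case is $k = 1$, where Proposition \ref{prop:bipartite_degrees} yields only $\sum_v \deg(v)$. But $k = 1$ forces $T_\lam$ to be the star $K_{1,n-1}$, since the sole vertex in the part of size $1$ must be adjacent to every vertex in the other part; in this case $\ell(\lam) = n-1$ is read off directly from $n$. This case is the main, albeit minor, obstacle: it just amounts to recognising stars from the shape of the bipartition (which one could alternatively do via the $m^n_{1^n}$-coefficient, as stars uniquely maximize it among $n$-vertex trees). Once this case is dealt with, $\ell(\lam)$ is determined in all scenarios.
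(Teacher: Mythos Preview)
Your proof is correct and follows essentially the same route as the paper: both extract $\sum_v \deg(v)^2$ via Proposition~\ref{prop:bipartite_degrees} when $k\ge 2$, observe that $d^2-3d+4(n-1)$ is injective for $d\ge 3$, and then dispose of the star case separately (you via the bipartition size $k=1$, the paper via the presence of an $m^n_{n-1,1}$ term, which amounts to the same observation). The only cosmetic difference is that the paper phrases injectivity as ``increasing for $\ell(\lam)\ge 3$'' while you locate the spurious quadratic root below $3/2$.
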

\begin{proof}
    The degree sequence of $T_\lam$ is $(\ell(\lam),2^{n-1-\ell(\lam)},1^{\ell(\lam)})$ where $n=|V(T_\lam)|$, and in particular, this is determined by the single parameter $\ell(\lam)$. We know from Proposition \ref{prop:bipartite_degrees} that the self-CSF determines $\sum_{v\in T_\lam}\tn{deg}(v)^i$ for every $1\leq i\leq k$ where $k$ is the size of the smaller part in the bipartition of $T_\lam$. In particular, for $i=2$, this sum is $\ell(\lam)^2+4(n-1-\ell(\lam))+\ell(\lam)=\ell(\lam)^2-3\ell(\lam)+4(n-1)$. This is an increasing, and therefore injective, function in $\ell(\lam)$ when $\ell(\lam)\geq3$, and hence uniquely determines $\ell(\lam)$. We can deduce this information so long as $k\geq2$, which is true for every spider except for the star. However, we could easily identify such a case, since the star is the only tree that has an $m^n_{n-1,1}$ term in its self-CSF, as well as the only spider with $\ell(\lam)=n-1$.
\end{proof}

\begin{prop}\label{prop:spider_auts}
    For an $n$-vertex spider $T_\lam$, $[m^n_{1^n}]X_{T_\lam}^{T_\lam}=\prod_{i=1}^{n-1}r_i(\lam)!$.
\end{prop}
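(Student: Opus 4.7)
The plan is to reduce the problem to counting $|\tn{Aut}(T_\lam)|$, then describe every automorphism combinatorially.

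First, I would invoke the fact, already recorded in Example \ref{ex:same_CSF_different_self-CSF} (where it was used to distinguish two graphs), that $[m^n_{1^n}]X_G^G$ equals the number of automorphisms of $G$. This is also a direct special case of Proposition \ref{prop:coeff_in_general}: the only set partition $\pi\vdash V(G)$ of type $1^n$ is the partition into singletons, and $G/\pi=G$, so $[m^n_{1^n}]X_G^G = S(G,G)\cdot|\tn{Aut}(G)| = |\tn{Aut}(G)|$ (the subgraph count $S(G,G)$ being $1$ as $G$ has exactly one subgraph isomorphic to itself on all $n$ vertices). So the task reduces to showing $|\tn{Aut}(T_\lam)|=\prod_{i=1}^{n-1}r_i(\lam)!$.

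Next, I would analyze an arbitrary automorphism $\sigma:T_\lam\to T_\lam$. Since $\ell(\lam)\geq 3$, the torso $t$ is the unique vertex of degree $\geq3$, so every automorphism must fix $t$. Consequently $\sigma$ restricts to a bijection of $V(T_\lam)\setminus\{t\}$ which permutes the connected components of $T_\lam-t$; these components are precisely the legs of $T_\lam$, each a path. An automorphism must send each leg to a leg of the same length, since the lengths are graph-theoretic invariants preserved by $\sigma$. Moreover, once $\sigma$ is specified to send a particular leg $L$ to a particular leg $L'$ of the same length, there is only one way to do it: the unique endpoint of $L$ adjacent to $t$ must map to the unique endpoint of $L'$ adjacent to $t$, and then the path structure forces the images of the remaining vertices of $L$ one at a time.

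Therefore, $\sigma$ is determined by, and corresponds bijectively to, the data of one permutation of the legs of each length $i$, i.e.\ an element of $\prod_{i=1}^{n-1}S_{r_i(\lam)}$. Conversely, every such tuple of permutations clearly assembles into a well-defined automorphism of $T_\lam$. Hence $|\tn{Aut}(T_\lam)|=\prod_{i=1}^{n-1}r_i(\lam)!$, which completes the proof. The only slightly subtle point, and the main thing to be careful about, is confirming that the torso is uniquely distinguished — which is exactly where the hypothesis $\ell(\lam)\geq 3$ that is built into the definition of a spider enters.
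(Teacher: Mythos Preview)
Your proof is correct and follows essentially the same approach as the paper's: both reduce to computing $|\tn{Aut}(T_\lam)|$, observe that the torso is fixed by any automorphism via degree considerations, and then argue that automorphisms correspond bijectively to permutations of legs of each fixed length. Your write-up is slightly more explicit about why the map on a single leg is uniquely determined and about where the hypothesis $\ell(\lam)\ge 3$ enters, but the argument is the same.
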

\begin{proof}
    Recall that $[m^n_{1^n}]X_{T_\lam}^{T_\lam}=|\tn{Aut}(T_\lam)|$. An automorphism necessarily preserves degrees, so a path beginning at a leaf (the only kind of vertex with degree 1) and ending at the torso (the only one with degree 3 or more) must map to a path beginning at a leaf and ending at the torso, and as such, a leg must map to a leg. Furthermore, as automorphisms preserve lengths of paths, a leg can only map to a leg of the same length. Since permuting legs of the same length leaves the structure of $T_\lam$ unchanged, all such permutations, and compositions thereof, are valid automorphisms. With $r_i(\lam)$ legs of length $i$, there are $r_i(\lam)!$ ways to permute these legs. Since legs of different lengths permute independently, the total number of automorphisms is the product of the $r_i(\lam)$'s.
\end{proof}

\begin{prop}\label{prop:spider_m21n-2}
    For an $n$-vertex spider $T_\lam$ with $\ell(\lam)\geq4$, we have
    \begin{align*}\frac{[m^n_{2,1^{n-2}}]X_{T_\lam}^{T_\lam}}{[m^n_{1^n}]X_{T_\lam}^{T_\lam}}=r_1(\lam)\ell(\lam)+\ell(\lam)-\frac32r_1(\lam)-\frac12r_1(\lam)^2+\sum_{j=2}^{n-1}r_{j-1}(\lam)r_j(\lam).\end{align*}
\end{prop}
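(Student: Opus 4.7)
The plan is to apply Corollary \ref{cor:m21n-2_XTT} to $T = T_\lam$. That corollary expresses the target coefficient as a $J$-sum, plus a sum over leaves, minus a correction for pairs of leaves at distance 2. The purpose of the hypothesis $\ell(\lam) \geq 4$ is to kill the $J$-sum entirely: any triple $(u,v,w)$ contributing to it has $\deg(v) = k+1 \geq 3$, forcing $v$ to be the torso $t$ (the unique vertex of degree $\geq 3$ in a spider), so $k = \ell(\lam) - 1 \geq 3$; but then one of $u, w$ would need degree $k \geq 3$, which is impossible because every neighbor of $t$ is the start of a leg and so has degree at most $2$.

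Next I would group the remaining sums by leg length. There are $r_1(\lam)$ leaves sitting on length-$1$ legs (each adjacent to $t$, contributing weight $\deg(\tn{nei}(\ell))-1 = \ell(\lam)-1$ in the leaf-sum) and $r_j(\lam)$ leaves on length-$j$ legs for each $j \geq 2$ (each adjacent to a degree-$2$ interior vertex, contributing weight $1$). Deleting the leaf of a length-$j$ leg yields another spider whose leg partition is obtained from $\lam$ by replacing one part of size $j$ with a part of size $j-1$ (or simply removing a $1$ if $j=1$). Since the $(n-1)$-vertex subtrees of $T_\lam$ correspond bijectively with its leaves, this gives $S(T_\lam - \ell, T_\lam) = r_j(\lam)$ whenever $\ell$ lies on a length-$j$ leg, and Proposition \ref{prop:spider_auts} supplies $|\Aut(T_\lam - \ell)|$ as the corresponding product of factorials. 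After cancelling factorials the per-leaf contribution $S(T_\lam - \ell, T_\lam)\cdot|\Aut(T_\lam - \ell)|$ simplifies to $|\Aut(T_\lam)|$ when $j=1$ and to $(r_{j-1}(\lam)+1)\,|\Aut(T_\lam)|$ when $j \geq 2$.

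For the subtracted correction, I would note that two leaves of a spider can be at distance $2$ only when both sit on length-$1$ legs, because any degree-$2$ interior vertex has at most one leaf neighbor, so the only common neighbor two leaves can share is $t$; this produces $\binom{r_1(\lam)}{2}$ pairs, each worth $|\Aut(T_\lam)|$ in the subtracted sum. Assembling the three pieces, dividing through by $|\Aut(T_\lam)| = [m^n_{1^n}]X_{T_\lam}^{T_\lam}$, and expanding $r_j(\lam)(r_{j-1}(\lam)+1)$ and $\binom{r_1(\lam)}{2}$ yields the stated formula.

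The main obstacle is essentially bookkeeping rather than conceptual: one must carefully handle the $j=2$ boundary case where removing a length-$2$ leg's leaf simultaneously decreases $r_2(\lam)$ and increases $r_1(\lam)$, and then collect the half-integer pieces into the final form $-\frac{3}{2}r_1(\lam) - \frac{1}{2}r_1(\lam)^2$. A sanity check on a small spider such as $T_{(1,1,1,1)} = K_{1,4}$ or $T_{(2,1,1,1)}$ would confirm the arithmetic.
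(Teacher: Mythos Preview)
Your proposal is correct and follows essentially the same route as the paper: apply Corollary~\ref{cor:m21n-2_XTT}, show the $J$-sum vanishes under $\ell(\lam)\ge 4$, compute $S(T_\lam-\ell,T_\lam)\,|\Aut(T_\lam-\ell)|$ leg-length by leg-length via Proposition~\ref{prop:spider_auts}, and assemble. Your argument for the vanishing of the $J$-sum (forcing $v=t$ and then observing neighbors of $t$ have degree $\le 2$) is a slight rephrasing of the paper's observation that the only $J$-triple with at most one entry exceeding $2$ is $(2,3,2)$; and your worry about the $j=2$ boundary is unfounded, since the formula $(r_{j-1}(\lam)+1)\,|\Aut(T_\lam)|$ already covers it uniformly.
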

\begin{proof}
    Suppose a leaf $v$ (we avoid the notation $\ell$ to prevent confusion with the length $\ell(\lam)$ of the partition $\lam$) belongs to a leg of length $\lam_i$. Then, using the notation of Corollary \ref{cor:m21n-2_XTT}, we see that:
    \begin{itemize}
        \item $\deg(\tn{nei}(v))-1=
        \begin{cases}
            1&\tn{if }\lam_i>1,\\
            \ell(\lam)-1&\tn{if }\lam_i=1,
        \end{cases}$
        \item $S(T_\lam-v,T_\lam)=r_{\lam_i}(\lam)$, and
        \item $|\tn{Aut}(T_\lam-v)|=\displaystyle\prod_{j=1}^{n-1}r_j(\lam^-)!$ where for each $1\leq j\leq\ell(\lam)$ we have $\lam_j^-=
        \begin{cases}
            \lam_j&\tn{if }j\neq i,\\
            \lam_j-1&\tn{if }j=i,
        \end{cases}$\\
        or equivalently, for each $1\leq j\leq n-1$, $r_j(\lam^-)=
        \begin{cases}
            r_j(\lam)&\tn{if }j\notin\{\lam_i,\lam_i-1\},\\
            r_j(\lam)+1&\tn{if }j=\lam_i-1\tn{ with }\lam_i\geq2,\\
            r_j(\lam)-1&\tn{if }j=\lam_i.
        \end{cases}$
    \end{itemize}
    Since each leaf uniquely identifies the leg it belongs to and vice versa, summing over the leaves $v$ is equivalent to summing over the parts $\lam_i$ corresponding to each leg. The observations above allow us to perform such a reindexing of the sum through expressing the functions of the $v$'s in terms of the $\lam_i$'s. For $\lam_i\geq2$, note that
    \begin{align*}
        r_{\lam_i}(\lam)\prod_{j=1}^{n-1}r_j(\lam^-)!
        &=\tcb{r_{\lam_i}(\lam)}\left(\prod_{j=1}^{\lam_i-2}r_j(\lam)!\right)\textcolor{red}{(r_{\lam_i-1}(\lam)+1)!}\tcb{(r_{\lam_i}(\lam)-1)!}\left(\prod_{j=\lam_i+1}^{n-1}r_j(\lam)!\right)
        \\&=\textcolor{red}{(r_{\lam_i-1}(\lam)+1)}\left(\prod_{j=1}^{\lam_i-2}r_j(\lam)!\right)\textcolor{red}{r_{\lam_i-1}(\lam)!}\,\tcb{r_{\lam_i}(\lam)!}\left(\prod_{j=\lam_i+1}^{n-1}r_j(\lam)!\right)
        \\&=(r_{\lam_i-1}(\lam)+1)\prod_{j=1}^{n-1}r_j(\lam)!
        =(r_{\lam_i-1}(\lam)+1)[m^n_{1^n}]X_{T_\lam}^{T_\lam},
    \end{align*}
    and for $\lam_i=1$,
    \begin{equation*}
        r_{\lam_i}(\lam)\prod_{j=1}^{n-1}r_j(\lam^-)!
        =r_1(\lam)(r_1(\lam)-1)!\prod_{j=2}^{n-1}r_j(\lam)!
        =r_1(\lam)!\prod_{j=2}^{n-1}r_j(\lam)!
        =\prod_{j=1}^{n-1}r_j(\lam)!
        =[m^n_{1^n}]X_{T_\lam}^{T_\lam}.
    \end{equation*}
    Combining the above, we get
    \begin{align*}
        &\sum_{\substack{v\in V(T_\lam)\\\tn{deg}(v)=1}}\hspace{-4pt}(\tn{deg}(\tn{nei}(v))-1)S(T_\lam-v,T_\lam)\big|\tn{Aut}(T_\lam-v)\big|
        -\hspace{-10pt}\sum_{\substack{\{v_1,v_2\}\subset V(T_\lam)\\d(v_1,v_2)=2\\\tn{deg}(v_1)=\tn{deg}(v_2)=1}}\hspace{-10pt}S(T_\lam-v_1,T_\lam)\big|\tn{Aut}(T_\lam-v_1)\big|
        \\&=\sum_{\substack{1\leq i\leq\ell(\lam)\\\lam_i\geq2}}(r_{\lam_i-1}(\lam)+1)[m^n_{1^n}]X_{T_\lam}^{T_\lam}
        +\sum_{\substack{1\leq i\leq\ell(\lam)\\\lam_i=1}}(\ell(\lam)-1)[m^n_{1^n}]X_{T_\lam}^{T_\lam}
        -\sum_{\substack{1\leq i<j\leq\ell(\lam)\\\lam_i=\lam_j=1}}[m^n_{1^n}]X_{T_\lam}^{T_\lam}
        \\&=\bigg(\ell(\lam)-r_1(\lam)+\sum_{\substack{1\leq i\leq\ell(\lam)\\\tcb{\lam_i}\geq2}}r_{\tcb{\lam_i}-1}(\lam)+r_1(\lam)(\ell(\lam)-1)-\binom{r_1(\lam)}{2}\bigg)[m^n_{1^n}]X_{T_\lam}^{T_\lam}
        \\&\hspace{70pt}\tn{now substitute }\tcb{j=\lam_i}\tn{ in the sum}
        \\&=\bigg(r_1(\lam)\ell(\lam)+\ell(\lam)-\frac32r_1(\lam)-\frac12r_1(\lam)^2+\sum_{\tcb{j}\geq2}r_{\tcb{j}-1}(\lam)\textcolor{red}{\#\{1\leq i\leq\ell(\lam)\mid\lam_i=j\}}\bigg)[m^n_{1^n}]X_{T_\lam}^{T_\lam}
        \\&=\bigg(r_1(\lam)\ell(\lam)+\ell(\lam)-\frac32r_1(\lam)-\frac12r_1(\lam)^2+\sum_{j=2}^{n-1}r_{j-1}(\lam)\textcolor{red}{r_j(\lam)}\bigg)[m^n_{1^n}]X_{T_\lam}^{T_\lam}.
    \end{align*}
    Lastly, we claim that when $\ell(\lam)\geq4$, the first term in Corollary \ref{cor:m21n-2_XTT} vanishes, i.e.
    $$\sum_{\substack{\{u,v,w\}\subset V(T_\lam)\\d(u,v)=d(v,w)=1\\(\deg(u),\deg(v),\deg(w))\in J}}\hspace{-15pt}S(T_\lam/uw,T_\lam)\big|\tn{Aut}(T_\lam/uw)\big|=0.$$
    Indeed, $T_\lam$ contains only one vertex of degree exceeding 2, but the only element of $J$ with at most one coordinate exceeding 2 is $(2,3,2)$. But when $\ell(\lam)\geq4$, there is no vertex of degree 3 in $T_\lam$, meaning that the set being summed over is empty. Hence, applying Corollary \ref{cor:m21n-2_XTT}, the result follows.
\end{proof}

\begin{prop}\label{prop:spider_endos}
    Let $T_\lam$ be a spider, $t$ be its torso, and $v_1,\dots,v_{\ell(\lam)}$ be its leaves such that the leg $L_i$ containing the leaf $v_i$ has length $\lam_i$. Then
    $$|\tn{End}(T_\lam)|=\sum_{u\in V(T_\lam)}\prod_{i=1}^{\ell(\lam)}\sum_{w_i\in V(T_\lam)}\#\{f\in\tn{Hom}(L_i,T_\lam)\mid f(t)=u,\,f(v_i)=w_i\}.$$
\end{prop}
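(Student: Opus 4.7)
The plan is to exploit the fact that a spider $T_\lam$ is, combinatorially, just the union of its legs glued at the single common vertex $t$. I would first observe that the edge set $E(T_\lam)$ is the disjoint union $\bigsqcup_{i=1}^{\ell(\lam)} E(L_i)$, and that for $i\neq j$ the legs $L_i$ and $L_j$ share only the torso $t$. Consequently, a function $f:V(T_\lam)\to V(T_\lam)$ preserves every edge of $T_\lam$ if and only if each restriction $f|_{L_i}$ is a homomorphism from $L_i$ to $T_\lam$. In particular, $f$ is an endomorphism of $T_\lam$ if and only if the $\ell(\lam)$-tuple $(f|_{L_1},\dots,f|_{L_{\ell(\lam)}})$ lies in $\prod_i \tn{Hom}(L_i,T_\lam)$, subject only to the compatibility condition that all the $f|_{L_i}$ send $t$ to the same vertex.

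Next, I would stratify the count by $u:=f(t)$. Fixing $u\in V(T_\lam)$, the above observation shows that the restrictions $f|_{L_i}$ can be chosen independently across the legs, each subject only to the single constraint $f|_{L_i}(t)=u$. Because the legs are pairwise disjoint away from $t$, distinct choices glue uniquely to distinct endomorphisms of $T_\lam$, and every endomorphism arises this way. Hence
$$|\tn{End}(T_\lam)|=\sum_{u\in V(T_\lam)}\prod_{i=1}^{\ell(\lam)}\#\{f\in\tn{Hom}(L_i,T_\lam)\mid f(t)=u\}.$$
Finally, I would refine the inner count by further stratifying each factor according to the image $w_i:=f(v_i)$ of the leaf, i.e. writing
$$\#\{f\in\tn{Hom}(L_i,T_\lam)\mid f(t)=u\}=\sum_{w_i\in V(T_\lam)}\#\{f\in\tn{Hom}(L_i,T_\lam)\mid f(t)=u,\,f(v_i)=w_i\},$$
and substituting into the previous display to recover the stated formula.

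There is no serious obstacle here; the content is really just the structural fact that spiders decompose into legs sharing only the torso. The one point worth being careful about is that the gluing of independently chosen leg-homomorphisms $f_i:L_i\to T_\lam$ (all agreeing on $t$) really does yield a homomorphism on all of $T_\lam$, not just one whose restrictions happen to be homomorphisms; this follows immediately from the disjoint-union description of $E(T_\lam)$, since no edge of $T_\lam$ crosses between distinct legs and so no new adjacency constraint is imposed by the gluing.
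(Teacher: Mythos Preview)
Your proposal is correct and follows essentially the same approach as the paper: decompose the spider into legs meeting only at the torso, observe that an endomorphism is equivalent to a compatible tuple of leg-homomorphisms agreeing at $t$, and stratify by the image of $t$ (and of the leaves). The only cosmetic difference is the order of operations: the paper first fixes both $u$ and all the $w_i$'s, writes the count as a nested sum of products, and then invokes distributivity to pull the $w_i$-sums inside the product, whereas you obtain the product form directly from independence and only afterward insert the $w_i$-stratification into each factor.
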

\begin{proof}
    An endomorphism $f\in\tn{End}(T_\lam)$ is uniquely specified by the image $f(t)$ of the torso $t$, the images $f(v_i)$ of the leaves $v_i$, and the images of the remaining vertices $V(L_i)\setminus\{t,v_i\}$ of the legs $L_i$ for each $i$. If we fix $f(t)=u$ and $f(v_i)=w_i$, then there are precisely $\#\{f\in\tn{Hom}(L_i,T_\lam)\mid f(t)=u,\,f(v_i)=w_i\}$ ways to choose the image of the remainder of $L_i$, and since these choices are independent for each $i$ (due to the fact that distinct legs are disconnected if we remove the torso), taking their product over all the $i$'s gives the total number of endomorphisms where $f(t)=u$ and $f(v_i)=w_i$ for each $i$. Summing over all choices of $u,w_1,\dots,w_{\ell(\lam)}$, we see that the total number of endomorphisms is
    $$|\tn{End}(T_\lam)|=\sum_{u\in V(T_\lam)}\sum_{w_1\in V(T_\lam)}\cdots\sum_{w_{\ell(\lam)}\in V(T_\lam)}\prod_{i=1}^{\ell(\lam)}\#\{f\in\tn{Hom}(L_i,T_\lam)\mid f(t)=u,\,f(v_i)=w_i\}.$$
    Since all sums and products are finite, we can reorder them by using the operator identity $$\sum_{w_1\in V(T_\lam)}\cdots\sum_{w_{\ell(\lam)}\in V(T_\lam)}\prod_{i=1}^{\ell(\lam)}=\prod_{i=1}^{\ell(\lam)}\sum_{w_i\in V(T_\lam)}$$ which follows by distributivity of multiplication over addition. Hence the result follows.
\end{proof}

To see how the result above helps us, we begin by observing the following linear-algebraic interpretation.

\begin{lemma}\label{lem:A1u}
    Let $A$ be the adjacency matrix of $T_\lam$ and let $\mathbf{1}$ be the column vector of the appropriate length whose entries are all ones. Then $$\sum_{w_i\in V(T_\lam)}\#\{f\in\tn{Hom}(L_i,T_\lam)\mid f(t)=u,\,f(v_i)=w_i\}=(A^{\lam_i}\mathbf{1})_u.$$
\end{lemma}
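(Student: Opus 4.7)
The plan is to recognize that a homomorphism from a path to a graph corresponds exactly to a walk in that graph, and then to invoke the standard adjacency-matrix formula for counting walks.

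First I would parametrize the leg $L_i$ explicitly. Since $L_i$ is a path of length $\lam_i$ with endpoints $t$ and $v_i$, I can write its vertices as $t = p_0, p_1, \ldots, p_{\lam_i} = v_i$ with edges $p_{j-1}p_j$ for $1 \le j \le \lam_i$. A homomorphism $f : L_i \to T_\lam$ with $f(t) = u$ and $f(v_i) = w_i$ is then uniquely determined by the sequence $(f(p_0), f(p_1), \ldots, f(p_{\lam_i}))$, where the adjacency condition forces $f(p_{j-1})f(p_j)$ to be an edge of $T_\lam$ for each $j$, while the boundary conditions fix $f(p_0) = u$ and $f(p_{\lam_i}) = w_i$. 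This is precisely the data of a walk of length $\lam_i$ from $u$ to $w_i$ in $T_\lam$.

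Next I would apply the classical fact that the number of walks of length $k$ from vertex $x$ to vertex $y$ in a graph with adjacency matrix $A$ equals the $(x,y)$-entry of $A^k$. Applying this with $k = \lam_i$ yields
$$\#\{f \in \tn{Hom}(L_i, T_\lam) \mid f(t) = u,\, f(v_i) = w_i\} = (A^{\lam_i})_{u, w_i}.$$
Summing over all choices of $w_i \in V(T_\lam)$ then gives
$$\sum_{w_i \in V(T_\lam)} (A^{\lam_i})_{u, w_i} = \bigl(A^{\lam_i} \mathbf{1}\bigr)_u,$$
since right-multiplication of a matrix by $\mathbf{1}$ yields the vector of row sums.

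There is no substantive obstacle here; the only subtlety worth flagging in the write-up is that homomorphisms of $L_i$ correspond to \emph{walks}, not paths, so repeated vertices and backtracking in $T_\lam$ are both permitted, which is exactly what $A^{\lam_i}$ counts. The result follows immediately upon chaining these two observations together.
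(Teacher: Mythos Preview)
Your proof is correct and follows essentially the same approach as the paper's: both identify the homomorphisms from the path $L_i$ with walks of length $\lam_i$ in $T_\lam$, invoke the standard fact that $(A^{\lam_i})_{u,w_i}$ counts such walks, and then sum over $w_i$ to obtain the $u$-th row sum $(A^{\lam_i}\mathbf{1})_u$. Your write-up is slightly more explicit in parametrizing the path vertex-by-vertex, but the argument is the same.
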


\begin{proof}
    Observe that the quantity being summed over is precisely the number of walks of length $\lam_i$ (which is the length of the leg $L_i$) starting from vertex $u$ and ending at vertex $w_i$. By a classical result in algebraic graph theory (see e.g. Lemma 8.1.2 of \cite{godsilroyle}), this is precisely the $(u,w_i)^\tn{th}$ component of the $\lam_i^\tn{th}$ power of the adjacency matrix. For fixed $u$, summing this quantity over all the $w_i$'s is equivalent to summing all the entries of the $u^\tn{th}$ row of $A^{\lam_i}$, but that is the same as taking the $u^\tn{th}$ component of the vector resulting from postmultiplying by $\mathbf{1}$.
\end{proof}

Unfortunately, an exact expression for $A^{\lam_i}\tb{1}$ would be too complicated to be useful, but we can still approximate it by utilizing the eigendecomposition of $A$. To begin, we need a few results from spectral graph theory. The \emph{\tb{\tcb{spectrum}}} of a graph refers to the multiset of eigenvalues of its adjacency matrix, whereas its \emph{\tb{\tcb{spectral radius}}} refers to the maximum of the absolute values of said eigenvalues.

\begin{lemma}
    Let $G$ be an undirected graph and $A$ its adjacency matrix. Then $A$ is orthogonally diagonalizable (in particular, it has real eigenvalues and we can create an orthonormal basis of eigenvectors).
\end{lemma}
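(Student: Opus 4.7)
The plan is to invoke the classical spectral theorem for real symmetric matrices. First I would observe that because $G$ is undirected, the edge relation is symmetric, so $A_{ij}=A_{ji}$ for every pair of vertices $i,j$; that is, $A$ is a symmetric matrix over $\mathbb R$. Then the conclusion will follow immediately from the real spectral theorem, which states that any real symmetric matrix $A$ admits a decomposition $A=Q D Q^\top$ with $Q$ orthogonal and $D$ diagonal with real entries, and in particular that $\mathbb R^{|V(G)|}$ has an orthonormal basis of eigenvectors of $A$ with real eigenvalues.

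The only thing worth doing explicitly beyond citing the spectral theorem is confirming the symmetry step (which is immediate from the definition of the adjacency matrix for an undirected graph, including the case where $G$ has loops, since loops simply contribute to the diagonal and do not affect symmetry). Since this is a textbook fact, I would give a one-sentence proof citing a standard reference such as Horn and Johnson's \emph{Matrix Analysis} or Axler's \emph{Linear Algebra Done Right}, rather than reproducing the proof of the spectral theorem.

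There is no real obstacle here; the statement is a direct consequence of the spectral theorem applied to the symmetric matrix $A$. The lemma is stated mainly to fix notation and to justify using an orthonormal eigenbasis of $A$ in the subsequent analysis of $A^{\lam_i}\mathbf{1}$ via eigendecomposition.
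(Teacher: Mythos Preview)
Your proposal is correct and matches the paper's approach exactly: the paper's entire proof is the single sentence ``$A$ is symmetric whenever $G$ is undirected,'' implicitly invoking the spectral theorem just as you do.
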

\begin{proof}
    $A$ is symmetric whenever $G$ is undirected.
\end{proof}

\begin{lemma}[cf. \cite{godsilroyle}, \S8.8] \label{lem:bipartite_spectra}
    Let $G$ be a bipartite graph with $A$ its adjacency matrix. For a vector $x$ indexed by the vertex set of $G$, write $x_0$ and $x_1$ to be the subvectors of $x$ whose indices belong to the two parts $V_0$ and $V_1$ respectively of the bipartition of $V(G)$. If $\alpha$ is an eigenvalue of $A$ with corresponding eigenvector $x$, then $-\alpha$ is an eigenvalue with corresponding eigenvector $y$ where $y_0=x_0$ and $y_1=-x_1$. In particular, this means that the multiset of nonzero eigenvalues of $A$ occur in pairs that differ only by sign, and so $\alpha$ and $-\alpha$ must have the same multiplicity. Furthermore, we have $x_0^Tx_0=x_1^Tx_1=\frac12x^Tx$ in such a case, i.e. the two parts in the bipartition contribute equally to the $l^2$-norm of any eigenvector.
\end{lemma}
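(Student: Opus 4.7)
The plan is to exploit the block structure of the adjacency matrix of a bipartite graph. Ordering the vertices of $V_0$ before those of $V_1$, the fact that $G$ has no edges within either part forces
$$A = \begin{pmatrix} 0 & B \\ B^T & 0 \end{pmatrix}$$
for some $|V_0| \times |V_1|$ matrix $B$, and I would write each vector $x$ as $x = \begin{pmatrix} x_0 \\ x_1 \end{pmatrix}$ accordingly. The eigenvalue equation $Ax = \alpha x$ then splits into the paired equations $B x_1 = \alpha x_0$ and $B^T x_0 = \alpha x_1$.

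First I would verify the sign-flip correspondence directly: setting $y_0 = x_0$ and $y_1 = -x_1$, a block calculation gives
$$Ay = \begin{pmatrix} -B x_1 \\ B^T x_0 \end{pmatrix} = \begin{pmatrix} -\alpha x_0 \\ \alpha x_1 \end{pmatrix} = -\alpha y,$$
so $y$ is an eigenvector for $-\alpha$. Since the map $\sigma: x \mapsto y$ is a linear involution of $\mathbb{R}^{V(G)}$, it restricts to a vector space isomorphism between the $\alpha$-eigenspace and the $(-\alpha)$-eigenspace, which yields the claim about matching multiplicities of $\pm\alpha$ in the spectrum.

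For the $\ell^2$-norm equality (in the case $\alpha \neq 0$, which is the interesting one since otherwise $\alpha$ and $-\alpha$ coincide), I would use both halves of the eigenvalue equation to compute
$$\alpha\, x_0^T x_0 = x_0^T(\alpha x_0) = x_0^T B x_1 = (B^T x_0)^T x_1 = (\alpha x_1)^T x_1 = \alpha\, x_1^T x_1.$$
Dividing by $\alpha$ gives $x_0^T x_0 = x_1^T x_1$, and combining with the decomposition $x^T x = x_0^T x_0 + x_1^T x_1$ yields $x_0^T x_0 = x_1^T x_1 = \tfrac12 x^T x$.

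There is no real obstacle here; everything reduces to bookkeeping once the block structure is made explicit. The only subtle point worth flagging is that the norm equality genuinely requires $\alpha \neq 0$, since in the zero eigenspace one can select $x_0$ and $x_1$ independently from $\ker(B^T)$ and $\ker(B)$ with no constraint relating their norms. This is consistent with the lemma's wording, which frames the norm claim alongside the pairing of \emph{nonzero} eigenvalues.
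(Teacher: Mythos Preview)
Your proof is correct and follows essentially the same route as the paper: both set up the block form of $A$, verify $Ay=-\alpha y$ by direct block computation, and deduce the norm identity by combining the two relations $Bx_1=\alpha x_0$ and $B^Tx_0=\alpha x_1$. Your explicit mention of the involution $\sigma$ to justify equal multiplicities and your flag that the norm claim needs $\alpha\neq 0$ are nice touches that the paper leaves implicit.
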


\begin{proof}
    If we order the vertex set so that all the vertices in $V_0$ are listed before the vertices of $V_1$, the adjacency matrix would have the block form
    $$A=\begin{pmatrix}
        0_{|V_0|\times|V_0|} & B \\[4pt]
        B^T & 0_{|V_1|\times|V_1|}
    \end{pmatrix}$$
    where $B$ is a $|V_0|\times|V_1|$ matrix, so if $Ax=\alpha x$, then
    $$\begin{pmatrix}
        0_{|V_0|\times|V_0|} & B \\[4pt]
        B^T & 0_{|V_1|\times|V_1|}
    \end{pmatrix}\begin{pmatrix}
        x_0 \\[4pt] x_1
    \end{pmatrix}=\begin{pmatrix}
        Bx_1 \\[4pt] B^T x_0
    \end{pmatrix}=\begin{pmatrix}
        \alpha x_0 \\[4pt] \alpha x_1
    \end{pmatrix},$$
    and thus
    $$\begin{pmatrix}
        0_{|V_0|\times|V_0|} & B \\[4pt]
        B^T & 0_{|V_1|\times|V_1|}
    \end{pmatrix}\begin{pmatrix}
        x_0 \\[4pt] -x_1
    \end{pmatrix}=\begin{pmatrix}
        -Bx_1 \\[4pt] B^T x_0
    \end{pmatrix}=\begin{pmatrix}
        -\alpha x_0 \\[4pt] \alpha x_1
    \end{pmatrix}=-\alpha\begin{pmatrix}
        x_0 \\[4pt] -x_1
    \end{pmatrix},$$
    i.e. $Ay=-\alpha y$ as required. Lastly, observe that 
    $$\alpha x^Tx=x^TAx=x_0^TBx_1+x_1^TB^Tx_0=2x_0^TBx_1=2x_0^T(\alpha x_0)=2\alpha x_0^Tx_0,$$
    from which $x_0^Tx_0=x_1^Tx_1=\frac12x^Tx$ follows.
\end{proof}

\begin{lemma}
    Let $G$ be a graph, $A$ its adjacency matrix, and $\rho(A)$ its spectral radius. Then $\rho(A)$ is a simple eigenvalue (i.e. eigenvalue with multiplicity 1) of $A$ whose corresponding eigenvector can be made to have strictly positive entries.
\end{lemma}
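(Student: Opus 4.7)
The plan is to prove this via a standard Perron--Frobenius style argument for the non-negative symmetric matrix $A$, with connectedness of $G$ (implicit from the context, since the lemma is applied to a spider $T_\lam$) being essential: for $G = K_2\sqcup K_2$, the spectral radius $1$ has multiplicity two, so simplicity genuinely requires a connectedness hypothesis. I will assume $G$ is connected throughout.

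First I would show that $\rho(A)$ is attained as the largest eigenvalue (not merely largest in absolute value) and admits a non-negative eigenvector. Since $A$ is symmetric, $\lam_{\max}(A) = \max_{\|x\|=1} x^T A x$ by the Rayleigh principle. For any real vector $x$, non-negativity of $A$ entrywise gives $|x|^T A|x| = \sum_{i,j} A_{ij}|x_i||x_j| \geq |x^TAx|$. Applied to a unit eigenvector $x$ for some eigenvalue $\mu$ with $|\mu| = \rho(A)$, this yields $|x|^T A|x| \geq |\mu| = \rho(A) \geq \lam_{\max}(A)$, forcing equality throughout; in particular, $\rho(A) = \lam_{\max}(A)$ and $|x|$ is a non-negative eigenvector for $\rho(A)$.

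Next I would upgrade $|x|$ to a strictly positive vector using connectedness. If $|x|_i = 0$ for some vertex $i$, then $0 = \rho(A)|x|_i = (A|x|)_i = \sum_j A_{ij}|x|_j$, and since every summand is non-negative, $|x|_j = 0$ for every neighbor $j$ of $i$. Iterating along paths in $G$ forces $|x| \equiv 0$, contradicting that $|x|$ is an eigenvector. Hence $|x| > 0$ everywhere. For simplicity, let $u$ be any real eigenvector for $\rho(A)$ (real by symmetry of $A$), and set $c = \min_i u_i / |x|_i$, which is well-defined since $|x| > 0$. Then $w := u - c|x|$ is a non-negative eigenvector for $\rho(A)$ with at least one zero entry; the same propagation argument forces $w \equiv 0$, whence $u = c|x|$, so the $\rho(A)$-eigenspace is one-dimensional.

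The only real subtlety is carefully separating ``largest eigenvalue'' from ``largest eigenvalue in absolute value.'' By Lemma \ref{lem:bipartite_spectra}, for bipartite $G$ (which includes trees, the main case of interest), $-\rho(A)$ is also an eigenvalue whose eigenvectors have mixed signs, so it is important that the $|x|$-trick in the first step produces a non-negative eigenvector for $\rho(A)$ itself rather than for $-\rho(A)$. Everything else is routine once connectedness is in hand to drive the propagation argument in Steps 2 and 3.
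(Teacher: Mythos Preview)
Your proof is correct. The paper itself does not give an argument: it simply invokes the Perron--Frobenius theorem by citing Theorem 8.8.1 of Godsil--Royle, whereas you supply a self-contained proof of the relevant special case (symmetric, non-negative, irreducible matrix). You also correctly flag that the lemma as stated omits the connectedness hypothesis, which is genuinely needed (and is present in the paper's application to spiders); this is a useful observation that the paper leaves implicit.
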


\begin{proof}
    This is a variant of the Perron-Frobenius theorem; see Theorem 8.8.1 of \cite{godsilroyle}.
\end{proof}

Combining the three lemmata above, we conclude the following:

\begin{cor}\label{cor:spectral_radius_eigendecomp}
    Let $G$ be a bipartite graph with $A$ its adjacency matrix and $\rho(A)$ its spectral radius. Then $\rho(A)$ and $-\rho(A)$ are the only eigenvalues of $A$ of maximum absolute value, and both have multiplicity 1. Furthermore, following the notation of Lemma \ref{lem:bipartite_spectra}, if $x=(x_0,x_1)$ is the eigenvector with eigenvalue $\rho(A)$, then $(x_0,-x_1)$ is the eigenvector with eigenvalue $-\rho(A)$, and without loss of generality, we can make $x$ be a unit vector with strictly positive entries, and in such a case, we have $\|x_0\|_2=\|x_1\|_2=2^{-1/2}$.
\end{cor}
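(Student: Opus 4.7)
The plan is to assemble the three preceding lemmata in a direct way. First, by the Perron--Frobenius-type lemma just stated, $\rho(A)$ is a simple eigenvalue of $A$ with an eigenvector whose entries are all strictly positive; after rescaling (which preserves positivity), I can take this eigenvector $x$ to have unit $\ell^2$-norm. I would write $x = (x_0, x_1)$ in the block decomposition corresponding to the bipartition $V(G) = V_0 \sqcup V_1$, following the notation of Lemma \ref{lem:bipartite_spectra}.

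Next, I would apply Lemma \ref{lem:bipartite_spectra} to $\alpha = \rho(A)$ and the eigenvector $x$. This immediately yields that $y := (x_0, -x_1)$ is an eigenvector of $A$ with eigenvalue $-\rho(A)$, so $-\rho(A)$ lies in the spectrum. Moreover, the map $(x_0, x_1) \mapsto (x_0, -x_1)$ is a linear involution that carries the $\rho(A)$-eigenspace bijectively onto the $(-\rho(A))$-eigenspace, so the two multiplicities coincide, and simplicity of $\rho(A)$ transfers to $-\rho(A)$. Because $A$ is real symmetric (first lemma), all its eigenvalues are real, so any eigenvalue $\mu$ with $|\mu| = \rho(A)$ must be one of $\pm \rho(A)$, both of which have multiplicity $1$; thus these are the only maximum-absolute-value eigenvalues.

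Finally, the norm equality $\|x_0\|_2 = \|x_1\|_2 = 2^{-1/2}$ is an immediate consequence of the last assertion of Lemma \ref{lem:bipartite_spectra}, applied with $x^T x = 1$, which gives $x_0^T x_0 = x_1^T x_1 = \tfrac12$.

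The only step that requires any real care is showing that the multiplicity of $-\rho(A)$ is exactly $1$, as opposed to at least $1$. This is handled by the bijection between eigenspaces above; once that is observed, the rest of the corollary is essentially bookkeeping, assembling the three lemmata in sequence (diagonalizability and reality of the spectrum, bipartite sign-pairing, and Perron--Frobenius simplicity with a positive eigenvector).
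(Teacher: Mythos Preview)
Your proposal is correct and follows essentially the same approach as the paper, which simply states the corollary as a direct combination of the three preceding lemmata without spelling out the details. Your explicit involution argument for the simplicity of $-\rho(A)$ is slightly redundant, since Lemma~\ref{lem:bipartite_spectra} already records that $\alpha$ and $-\alpha$ have equal multiplicity, but the reasoning is sound and matches the intended assembly of the three lemmata.
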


The whole reason for our interest in and extensive analysis of the spectral radius comes from the observation that when a matrix is raised to a large power, the result is dominated in magnitude by the terms containing the largest eigenvalue(s). Indeed, this is the basis for our approximation below:

\begin{prop}\label{prop:spider_endos_eigenexpansion}
    For an $n$-vertex spider $T_\lam$ with adjacency matrix $A$ and spectral radius $\rho$, we have 
    \begin{align*}
        |\tn{End}(T_\lam)|&=2^{\ell(\lam)}\rho^{n-1}\big(\|x_0\|_{\ell(\lam)}^{\ell(\lam)}\|x_0\|_1^{e(\lam)}\|x_1\|_1^{o(\lam)}+\|x_1\|_{\ell(\lam)}^{\ell(\lam)}\|x_0\|_1^{o(\lam)}\|x_1\|_1^{e(\lam)}\big)+o(\rho^{n-1})
    \end{align*}
    where $\|\cdot\|_p$ denotes the $p$-norm of a vector, $e(\lam)$ and $o(\lam)=\ell(\lam)-e(\lam)$ denote the number of even and odd parts respectively in $\lam$, while $x=(x_0,x_1)$ and $y=(x_0,-x_1)$ are unit eigenvectors of $A$ corresponding to the eigenvalues $\rho$ and $-\rho$ respectively. Once again, $x_0$ and $x_1$ denote the components of $x$ indexed by vertices in $V_0$ and $V_1$ respectively, the two parts in the bipartition of $V(T_\lam)$.
\end{prop}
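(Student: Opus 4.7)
The plan is to bootstrap directly from Proposition \ref{prop:spider_endos} and Lemma \ref{lem:A1u}, which combine to give
$$|\tn{End}(T_\lam)| = \sum_{u \in V(T_\lam)} \prod_{i=1}^{\ell(\lam)} (A^{\lam_i}\mathbf{1})_u,$$
reducing the task to estimating each factor $(A^{\lam_i}\mathbf{1})_u$ and then carrying out the sum and product. To do so I would spectrally decompose $A$: since $A$ is symmetric, pick an orthonormal eigenbasis $x^{(1)} = x,\ x^{(2)} = y,\ x^{(3)},\dots,x^{(n)}$ with eigenvalues $\rho,\ -\rho,\ \alpha_3,\dots,\alpha_n$, where $|\alpha_k|<\rho$ for $k\geq 3$ by Corollary \ref{cor:spectral_radius_eigendecomp}. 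Expanding $\mathbf{1}$ in this basis yields
$$A^{\lam_i}\mathbf{1} = \rho^{\lam_i}(x^T\mathbf{1})x + (-\rho)^{\lam_i}(y^T\mathbf{1})y + \sum_{k\geq 3}\alpha_k^{\lam_i}\bigl((x^{(k)})^T\mathbf{1}\bigr)x^{(k)}.$$

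Next, using $x=(x_0,x_1)$ with all-positive entries and $y=(x_0,-x_1)$, I compute $x^T\mathbf{1}=\|x_0\|_1+\|x_1\|_1$ and $y^T\mathbf{1}=\|x_0\|_1-\|x_1\|_1$, and then extract the leading part of $(A^{\lam_i}\mathbf{1})_u$. For $u\in V_0$, this leading part equals
$$\rho^{\lam_i}(x_0)_u\bigl[(\|x_0\|_1+\|x_1\|_1)+(-1)^{\lam_i}(\|x_0\|_1-\|x_1\|_1)\bigr],$$
which collapses to $2\rho^{\lam_i}(x_0)_u\|x_0\|_1$ when $\lam_i$ is even and to $2\rho^{\lam_i}(x_0)_u\|x_1\|_1$ when $\lam_i$ is odd; the corresponding expression for $u\in V_1$ is analogous with the roles of $x_0$ and $x_1$ swapped. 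Multiplying these across $i=1,\dots,\ell(\lam)$ and using $\sum_i\lam_i = n-1$ gives a leading contribution of $2^{\ell(\lam)}\rho^{n-1}(x_0)_u^{\ell(\lam)}\|x_0\|_1^{e(\lam)}\|x_1\|_1^{o(\lam)}$ at each $u\in V_0$ and the analogous contribution at each $u\in V_1$. Summing over $u$ and recognising $\sum_{u\in V_j}((x_j)_u)^{\ell(\lam)}=\|x_j\|_{\ell(\lam)}^{\ell(\lam)}$ assembles exactly the two-term leading expression appearing in the proposition.

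The main obstacle is bounding the remainder by $o(\rho^{n-1})$. Writing $(A^{\lam_i}\mathbf{1})_u=(M_i)_u+(E_i)_u$ where $M_i$ collects the $\pm\rho$ contributions and $E_i$ the rest, one has $\|E_i\|_\infty\leq C\rho_2^{\lam_i}$ for $\rho_2:=\max_{k\geq 3}|\alpha_k|<\rho$, while $|(M_i)_u|\leq D\rho^{\lam_i}$ for some constants $C,D$. Expanding $\prod_i[(M_i)_u+(E_i)_u]$ and isolating the all-$M$ term, every other summand contains at least one factor of the smaller order $\rho_2^{\lam_i}$, so it is bounded by $O\bigl(\rho^{n-1}(\rho_2/\rho)^{\min_i\lam_i}\bigr)$. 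In the regime $\min_i\lam_i\to\infty$ (which is the one in which the asymptotic is meaningful), this is indeed $o(\rho^{n-1})$ as claimed. The delicate point is tracking the bounds uniformly in $\ell(\lam)$ and in the graph-dependent constants; I would formalise the argument through a short lemma on products of leading-plus-tail decompositions and then plug it in.
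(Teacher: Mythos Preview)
Your approach is essentially the same as the paper's: both expand $\mathbf{1}$ in an orthonormal eigenbasis of $A$, isolate the $\pm\rho$ contributions to $(A^{\lam_i}\mathbf{1})_u$, split the sum over $u$ according to the bipartition, use $y_u=\pm x_u$ to simplify, and then assemble the product using $\sum_i\lam_i=n-1$. The paper carries out the same algebra you sketch, just with slightly different bookkeeping (it writes the factors as $2\mathbbm1\{\lam_i\tn{ even}\}\mathbf{1}^Tx_0+2\mathbbm1\{\lam_i\tn{ odd}\}\mathbf{1}^Tx_1$ rather than splitting into even/odd cases explicitly).

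One point worth noting: you are actually more careful than the paper about the remainder. The paper simply declares the non-$\pm\rho$ contributions to be $o(\rho^{\lam_i})$ per factor and then writes the accumulated error as $o(\rho^{n-1})$, without specifying the asymptotic regime. Your observation that the bound $O\bigl(\rho^{n-1}(\rho_2/\rho)^{\min_i\lam_i}\bigr)$ only gives $o(\rho^{n-1})$ when $\min_i\lam_i\to\infty$ is correct and is a genuine caveat the paper glosses over; since the proposition is stated for a single fixed spider, the meaning of the little-$o$ is indeed somewhat informal in both accounts.
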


\begin{proof}
    Expressing the all-ones vector $\tb{1}$ in the eigenvector basis, we see that the coefficient of an eigenvector $x$ is the component of $\tb{1}$ along $x$, namely $(x^Tx)^{-1}\tb{1}^Tx$, but since we are assuming without loss of generality that $x$ is a unit vector, we can omit the division by $x^Tx$. Using this, we see that
    $$A^{\lam_i}\tb{1}=A^{\lam_i}((\tb{1}^Tx)x+(\tb{1}^Ty)y+\cdots)=\rho^{\lam_i}(\tb{1}^Tx)x+(-\rho)^{\lam_i}(\tb{1}^Ty)y+\cdots,$$
    and that the terms omitted by the ellipsis are of order $o(\rho^{\lam_i})$ by Corollary \ref{cor:spectral_radius_eigendecomp}. As such, by Proposition \ref{prop:spider_endos} and Lemma \ref{lem:A1u}, we have
    \begin{align*}
        |\tn{End}(T_\lam)|&=\sum_{u\in V(T_\lam)}\prod_{i=1}^{\ell(\lam)}(A^{\lam_i}\tb{1})_u=\sum_{u\in V(T_\lam)}\prod_{i=1}^{\ell(\lam)}\rho^{\lam_i}\big((\tb{1}^Tx)x_u+(-1)^{\lam_i}(\tb{1}^Ty)y_u+o(1)\big)\\
        &=\rho^{\sum_{i=1}^{\ell(\lam)}\lam_i}\sum_{u\in V(T_\lam)}\prod_{i=1}^{\ell(\lam)}\big((\tb{1}^Tx_0+\tb{1}^Tx_1)x_u+(-1)^{\lam_i}(\tb{1}^Tx_0-\tb{1}^Tx_1)y_u+o(1)\big).
        \end{align*}
        Now since $y_u=\begin{cases}
            x_u & \tn{ when }u\in V_0\tn,\\
            -x_u & \tn{ when }u\in V_1,\\
        \end{cases}$ splitting the sum over $u$ into the two cases gives
        \begin{align*}
        & \rho^{n-1}\sum_{u\in V_0}\prod_{i=1}^{\ell(\lam)}x_u\big((1+(-1)^{\lam_i})\tb{1}^Tx_0+(1-(-1)^{\lam_i})\tb{1}^Tx_1\big)\\
        &\hspace{15pt}+\rho^{n-1}\sum_{u\in V_1}\prod_{i=1}^{\ell(\lam)}x_u\big((1-(-1)^{\lam_i})\tb{1}^Tx_0+(1+(-1)^{\lam_i})\tb{1}^Tx_1\big)+o(\rho^{n-1})\\
        &=\rho^{n-1}\sum_{u\in V_0}x_u^{\ell(\lam)}\prod_{i=1}^{\ell(\lam)}\big(2\mathbbm1\{\lam_i\tn{ even}\}\tb{1}^Tx_0+2\mathbbm1\{\lam_i\tn{ odd}\}\tb{1}^Tx_1\big)\\
        &\hspace{15pt}+\rho^{n-1}\sum_{u\in V_1}x_u^{\ell(\lam)}\prod_{i=1}^{\ell(\lam)}\big(2\mathbbm1\{\lam_i\tn{ odd}\}\tb{1}^Tx_0+2\mathbbm1\{\lam_i\tn{ even}\}\tb{1}^Tx_1\big)+o(\rho^{n-1})\\
        &=2^{\ell(\lam)}\rho^{n-1}\bigg(\sum_{u\in V_0}x_u^{\ell(\lam)}(\mathbf{1}^Tx_0)^{e(\lam)}(\mathbf{1}^Tx_1)^{o(\lam)}+\sum_{u\in V_1}x_u^{\ell(\lam)}(\mathbf{1}^Tx_0)^{o(\lam)}(\mathbf{1}^Tx_1)^{e(\lam)}\hspace{-2pt}\bigg)+o(\rho^{n-1}).
    \end{align*}
    (We are writing $\mathbbm{1}\{\lam_i\tn{ odd}\}$ here to denote the indicator variable for the event that $\lam_i$ is odd, and similarly for $\mathbbm{1}\{\lam_i\tn{ even}\}$.) Lastly, since $x_0$ and $x_1$ have strictly positive entries, we have $\tb{1}^Tx_i=\|x_i\|_1$ and $\sum_{u\in V_i}x_u^{\ell(\lam)}=\|x_i\|_{\ell(\lam)}^{\ell(\lam)}$ for $i=0,1$, from which the result follows. 
\end{proof}

This leads us to suggest the following weaker version of the conjecture from \cite{EFHKY22} that the self-CSF distinguishes all trees, which we think may be easier to prove:

\begin{conjecture}
    The self-CSF distinguishes all spiders of sufficiently large size.
\end{conjecture}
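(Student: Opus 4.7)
The plan is to extend the coefficient-extraction approach used in Propositions \ref{prop:spider_legs} through \ref{prop:spider_m21n-2} to a larger family of coefficients $[m^n_\mu] X_{T_\lambda}^{T_\lambda}$, and then combine the resulting system of polynomial identities in the leg-length multiplicities $r_i(\lambda)$ so as to pin $\lambda$ down uniquely once $n$ is sufficiently large. Propositions \ref{prop:spider_legs}, \ref{prop:spider_auts}, and \ref{prop:spider_m21n-2} already let us read off from $X_{T_\lambda}^{T_\lambda}$ the values of $n$, $\ell(\lambda)$, $\prod_i r_i(\lambda)!$, and a specific polynomial in $r_1(\lambda)$ and $\sum_j r_j(\lambda) r_{j+1}(\lambda)$; combined with the trivial constraints $\sum_i r_i(\lambda) = \ell(\lambda)$ and $\sum_i i \, r_i(\lambda) = n - 1$, these already cut the candidate set for $\lambda$ down substantially.

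The first step of the proof would be to extract further polynomial identities from coefficients of the form $[m^n_{k, 1^{n-k}}] X_{T_\lambda}^{T_\lambda}$ and $[m^n_{2^k, 1^{n-2k}}] X_{T_\lambda}^{T_\lambda}$ for small $k \geq 3$. By Proposition \ref{prop:coeff_in_general}, each such coefficient equals $\sum_\pi S(T_\lambda / \pi, T_\lambda) |\tn{Aut}(T_\lambda/\pi)|$ summed over set partitions $\pi$ of the appropriate type. Because a spider has only one vertex (the torso) of degree exceeding $2$, the quotients $T_\lambda/\pi$ that embed into $T_\lambda$ are very restricted, and I expect each coefficient to evaluate to a polynomial in the $r_i(\lambda)$'s whose dominant monomial depends on the index $k$, so that a suitable linear combination of such polynomial invariants recovers the individual values of $r_i(\lambda)$.

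As a complementary ingredient, I would feed in spectral data by way of Proposition \ref{prop:spider_endos_eigenexpansion}. The total endomorphism count $|\tn{End}(T_\lambda)|$ is recoverable from $X_{T_\lambda}^{T_\lambda}$ as $\sum_\mu [m^n_\mu] X_{T_\lambda}^{T_\lambda}$, and its leading-order asymptotic in $\rho = \rho(T_\lambda)$ determines $\rho$ exactly once $n$ is large. By applying similar asymptotic analyses to other judiciously chosen linear combinations of coefficients (which should expose sub-leading eigenvalues of the adjacency matrix), one can in principle recover more of the spectrum of $T_\lambda$. Since the characteristic polynomial of a spider admits a clean product expression in terms of its leg lengths via a standard transfer-matrix computation, this spectral data yields further polynomial constraints on the $r_i(\lambda)$'s.

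The main obstacle will be closing the argument, i.e., showing that the combined system of invariants recovered above genuinely determines $\lambda$ uniquely for all sufficiently large $n$, rather than only cutting the candidate set down to a bounded but nonempty number of alternatives. This is particularly delicate because cospectral non-isomorphic spiders are known to exist, so spectral data alone cannot suffice, and because the polynomial identities coming from low-degree coefficients become combinatorially difficult to analyse once $\ell(\lambda)$ is large. A plausible strategy is to argue by induction on $n$: coefficients $[m^n_\mu] X_{T_\lambda}^{T_\lambda}$ indexed by partitions $\mu$ with one long part should preferentially detect identifications involving vertices at large distance from the torso, i.e., long legs of $T_\lambda$, so that identifying the longest leg first reduces the problem to a smaller spider and allows one to induct.
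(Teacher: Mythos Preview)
This statement is a \emph{conjecture}, and the paper does not prove it. What the paper offers under this heading is explicitly labeled ``Supporting evidence'' rather than ``Proof'': it cites a result of Oliveira that spiders on the same number of vertices are distinguished by their spectral radii, observes via Proposition~\ref{prop:spider_endos_eigenexpansion} that $\rho^{n-1}$ is the dominant factor in $|\tn{End}(T_\lambda)|$, and then notes that the hoped-for monotonicity of $|\tn{End}(T_\lambda)|$ in the lexicographic order on $\tn{rev}(\lambda)$ fails (the counterexample $T_{6,4,4,1}$ versus $T_{9,2,2,2}$ is given). The paper concludes only that the conjecture seems plausible given the additional invariants from Propositions~\ref{prop:spider_legs}--\ref{prop:spider_m21n-2}, and leaves it open. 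So there is no proof in the paper for your proposal to be compared against.

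Your proposal is, like the paper's discussion, a research outline rather than a proof, and you correctly flag its main gap: you do not establish that the collected invariants actually determine $\lambda$. Two remarks on the content. First, your assertion that ``cospectral non-isomorphic spiders are known to exist'' is false in the relevant setting: the Oliveira result cited in the paper says precisely that distinct $n$-vertex spiders have distinct spectral radii, so in particular they are never cospectral. This is good news for your spectral strategy, not an obstacle. Second, your plan to extract further coefficients $[m^n_{k,1^{n-k}}]$ and $[m^n_{2^k,1^{n-2k}}]$ is reasonable in spirit, but the combinatorics of $S(T_\lambda/\pi,T_\lambda)$ for these $\pi$ is already delicate for $k=2$ (cf.\ the case analysis in Proposition~\ref{prop:m21n-2_XTT}), and you give no argument that the resulting polynomials in the $r_i(\lambda)$'s are algebraically independent enough to recover $\lambda$. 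The inductive idea of peeling off the longest leg is appealing but would require showing that some specific coefficient isolates $\lambda_1$, which you have not done.
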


\begin{proof}[Supporting evidence]
    In Theorem 4 of \cite{Oliveira2018}, the authors showed that spiders with the same number of vertices are distinguished by their spectral radii, and in fact, an even stronger result holds: if $\rho_\lam$ is the spectral radius of $T_\lam$, then $\rho_\lam>\rho_\mu$ whenever $\tn{rev}(\lam)>\tn{rev}(\mu)$ in lexicographic order, where $\tn{rev}(\lam)$ denotes the sequence obtained by writing the parts of the partition $\lam$ in reverse (of the usual) order, i.e. in increasing rather than decreasing order. Since $\rho_\lam^{n-1}$ is, asymptotically speaking, the main factor determining the size of the right hand side of the expression in Proposition \ref{prop:spider_endos_eigenexpansion}, it may seem likely that the lexicographic ordering of the spectral radii extends to that of the size of the endomorphism monoid. Since $|\tn{End}(T_\lam)|$ can be obtained from the self-CSF through summing all of its coefficients in the $m^n$-basis, this would imply the self-distinguishability of spiders. 
    
    Unfortunately, this is not true in general, one reason being the fact that changing the bipartition sizes heavily impact the values of $\|x_0\|_{\ell(\lam)}$, $\|x_1\|_{\ell(\lam)}$, $\|x_0\|_1$, $\|x_1\|_1$, $e(\lam)$, and $o(\lam)$, with the combined effect of these changes enough for the terms in parentheses in Proposition \ref{prop:spider_endos_eigenexpansion} to hinder the desired monotonically increasing property of $|\tn{End}(T_\lam)|$ from holding in general. For spiders with the same value of the ordered pair $(|V_0|,|V_1|)$, our numerical tests show that the terms in the parentheses generally do not vary as much as between spiders with different bipartition sizes, but even then, it still is not generally true in such cases that $|\tn{End}(T_\lam)|>|\tn{End}(T_\mu)|$ whenever $\tn{rev}(\lam)>\tn{rev}(\mu)$. A counterexample is the pair of 16-vertex spiders $T_{6,4,4,1}$ and $T_{9,2,2,2}$, both of which have bipartition sizes 8 and 8: we have $|\tn{End}(T_{6,4,4,1})|=2633524>2628414=|\tn{End}(T_{9,2,2,2})|$ even though $\tn{rev}(6,4,4,1)<\tn{rev}(9,2,2,2)$, with the direction of the inequality remaining the same if we use the approximation given by Proposition \ref{prop:spider_endos_eigenexpansion} (the values for $T_{6,4,4,1}$ and $T_{9,2,2,2}$ are $3.38\times10^6$ and $3.13\times10^6$ respectively) rather than the exact value of $|\tn{End}(T_\lam)|$. However, these exceptions seem to be rare -- with the help of a few other pieces of information, such as the ones we can deduce using Propositions \ref{prop:spider_legs} through \ref{prop:spider_m21n-2}, we believe that  self-distinguishability of spiders remains to be a likely conclusion. \phantom\qedhere
\end{proof}

\subsection{Self-distinguishability of caterpillars}\label{subsec:caterpillars}

A \emph{\tb{\tcb{caterpillar}}} is a tree whose induced subtree obtained by deleting all the leaves is a non-trivial path, known as the \emph{\tb{\tcb{spine}}}. Once again, we follow the notation of \cite{MMW08}: let $v_0,\dots,v_s$ denote the vertices of the spine in this exact order (up to reversing the sequence), with $s\geq1$ being the length of the spine, and let $f_i$ be the number of leaves adjacent to $v_i$ (so $f_i=\deg(v_i)-1$ for $i=0$ and $i=s$, and $f_i=\deg(v_i)-2$ for $0<i<s$). In particular, we must have $f_0\geq1$ and $f_s\geq1$, since if otherwise, $v_0$ or $v_s$ would be a leaf, contradicting the assumption that they are part of the spine. Denote by $f$ the sequence $(f_0,\dots,f_s)$, and let $T_f$ denote the caterpillar with these $f_i$'s.

Clearly, for any $f$ we have $T_f\cong T_{\tn{rev}(f)}$, where $\tn{rev}(f)$ denotes the sequence $f$ written in reverse order, where we write $\cong$ to mean that the two graphs are isomorphic. As we shall show below, $\tn{rev}(f)$ plays a role in determining the order of $\tn{Aut}(T_f)$.

\begin{prop}\label{prop:caterpillar_auts}
    For a caterpillar with spine length $s$, $[m^n_{1^n}]X_{T_f}^{T_{f}}=(1+\mathbbm1\{\tn{rev}(f)=f\})\prod_{i=0}^{s}f_i!$, where $$\mathbbm1\{\tn{rev}(f)=f\}=
    \begin{cases}
        1&\tn{if }\tn{rev}(f)=f\\
        0&\tn{if }\tn{rev}(f)\neq f
    \end{cases}$$
    is the indicator function of the event $\tn{rev}(f)=f$.
\end{prop}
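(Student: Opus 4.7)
The plan is to interpret $[m^n_{1^n}]X_{T_f}^{T_f}$ as $|\Aut(T_f)|$, which is already noted at the start of Section 2, and then enumerate $\Aut(T_f)$ by its action on the spine.

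First I would note that any $\sigma\in\Aut(T_f)$ must preserve the spine setwise: automorphisms preserve degree, so leaves get permuted among leaves, and hence the non-leaf vertices (which for a caterpillar are exactly the spine vertices $v_0,v_1,\dots,v_s$) get permuted among themselves. The restriction $\sigma|_{\{v_0,\dots,v_s\}}$ is then a graph automorphism of a path of length $s\ge 1$, so it is either the identity or the reversal $v_i\mapsto v_{s-i}$.

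Next I would count the extensions of each spine action. In the identity case, every leaf adjacent to $v_i$ must map to some leaf adjacent to $\sigma(v_i)=v_i$, and the resulting leaf-permutations at distinct $v_i$'s are independent, so this case contributes $\prod_{i=0}^{s} f_i!$ automorphisms. In the reversal case, the action extends to the leaves only if $f_i=f_{s-i}$ for every $i$, i.e.\ $\tn{rev}(f)=f$; otherwise there is no extension. When this condition holds, for each unordered pair $\{v_i,v_{s-i}\}$ with $i\ne s-i$ one independently chooses a bijection from the leaves of $v_i$ to those of $v_{s-i}$ and a bijection in the opposite direction, contributing $f_i!\cdot f_{s-i}!$; when $s$ is even, the fixed middle vertex $v_{s/2}$ additionally contributes a factor of $f_{s/2}!$. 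A short parity case-check, using $f_i=f_{s-i}$, shows that in both parities the total product telescopes to $\prod_{i=0}^{s} f_i!$.

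Summing the two contributions yields $(1+\mathbbm{1}\{\tn{rev}(f)=f\})\prod_{i=0}^{s} f_i!$, which is the claimed identity. The only spot requiring any care is the reversal case: one might naively expect only $\prod_{i=0}^{\lfloor s/2\rfloor} f_i!$ automorphisms, but the bijections between each paired leaf-set and its mirror image are chosen independently in the two directions, so each such pair contributes $(f_i!)^2=f_i!\,f_{s-i}!$, making the reversal contribution equal to (rather than the square root of) the identity contribution.
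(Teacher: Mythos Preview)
Your proof is correct and follows essentially the same approach as the paper: identify $[m^n_{1^n}]X_{T_f}^{T_f}$ with $|\Aut(T_f)|$, show the spine is preserved setwise so the induced spine action is either the identity or the reversal, and count leaf-extensions in each case. The paper's only difference is that in the reversal case it counts the extensions by indexing directly over all $i=0,\dots,s$ (choosing one bijection from the leaves at $v_i$ to those at $\sigma(v_i)$, giving $f_i!$ choices each), which yields $\prod_{i=0}^s f_i!$ immediately without your pair-grouping and parity case-check.
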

\begin{proof}
    Since automorphisms preserve degrees and map paths to paths, the spine, being the unique path of maximal length consisting of vertices with degree greater than 1, must map to itself under an automorphism $\sigma$. There are two ways to make this happen: the first being $\sigma(v_i)=v_i$, which is valid for any caterpillar, and the second being $\sigma(v_i)=v_{s-i}$, which is possible if and only if $v_{s-i}$ has the same number of leaves attached to it as $v_i$ for all $i$ (since automorphisms preserve degrees), which is the case when and only when $\tn{rev}(f)=f$. This explains the $1+\mathbbm1\{\tn{rev}(f)=f\}$ factor. Once the image of the spine is fixed, it remains to specify where the leaves map to. Since automorphisms preserve distances, each leaf can only map to other leaves attached to the same spine vertex. Since all $f_i!$ permutations of the leaves attached to vertex $v_i$ are possible, and permutations of different spine vertices are independent of each other, the total number of automorphisms is the product of all the aforementioned factors.
\end{proof}

Our next result shows that we can almost determine the multiset of the $f_i$'s through considering the degrees $d_i=\deg(v_i)$ of the spine vertices. The $d_i$'s and $f_i$'s are related via $d_i=f_i+2$ if $i\notin\{0,s\}$, while $d_0=f_0+1$ and $d_s=f_s+1$.

\begin{prop}\label{prop:caterpillar_degrees}
    $X_{T_f}^{T_f}$ uniquely determines the multiset $\{d_i\}_{0\leq i\leq s}$ for a caterpillar $T_f$ of spine length $s$, provided that $s$ is known beforehand.
\end{prop}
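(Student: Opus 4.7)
The plan is to extract enough power sums of the spine degrees $d_0,\ldots,d_s$ from $X_{T_f}^{T_f}$ to recover their multiset via Newton's identities. From the self-CSF I first read off $n$ (as the total degree), the bipartition sizes $k\le n-k$ (from the unique length-$2$ monomial $m_{n-k,k}^n$), and hence the leaf count $n-s-1$. Because the caterpillar has $\lfloor s/2\rfloor+1$ even-indexed and $\lceil s/2\rceil$ odd-indexed spine vertices, with one bipartition class containing the spine vertices of one parity together with the leaves adjacent to spine vertices of the opposite parity, knowing $s$ and the bipartition sizes determines the separate leaf totals $E=\sum_{i\tn{ even}}f_i$ and $O=\sum_{i\tn{ odd}}f_i$. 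Using $d_i=f_i+c_i$ with $c_0=c_s=1$ and $c_i=2$ for interior $i$, I also recover the partial sums $\sum_{i\tn{ even}}d_i=E+s$ and $\sum_{i\tn{ odd}}d_i=O+s$ separately.

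Since $T_f$ is a connected bipartite graph, Proposition \ref{prop:bipartite_degrees} supplies $\sum_{v\in V(T_f)}\deg(v)^{\ell}$ for every $1\le\ell\le k$, and each leaf contributes $1^{\ell}=1$, so subtracting $n-s-1$ yields the spine power sums $\til p_{\ell}:=\sum_{i=0}^s d_i^{\ell}$ for $\ell=1,\ldots,k$. When $k\ge s+1$, Newton's identities applied to $\til p_1,\ldots,\til p_{s+1}$ recover the elementary symmetric polynomials of $\{d_i\}$ and hence the multiset itself, finishing the proof in this case.

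The remaining case $k\le s$ demands a very unbalanced bipartition. Without loss of generality the smaller class has size $k=\lfloor s/2\rfloor+1+O$, forcing $O\le\lceil s/2\rceil-1$; since the $\lceil s/2\rceil$ nonnegative integers $\{f_i\}_{i\tn{ odd}}$ sum to $O$, at least $\lceil s/2\rceil-O\ge 1$ of them must vanish (and the odd endpoint $v_s$ with $f_s\ge 1$, if $s$ is odd, is not among these), forcing the corresponding interior odd-indexed $d_i$ to equal $2$. Subtracting these forced degree-$2$ contributions from the power sums and from the odd partial sum reduces the problem to determining a multiset of at most $\lfloor s/2\rfloor+1+O=k$ remaining spine degrees (each $\ge 2$), and the $k$ power-sum equations together with the even/odd sum split should pin down this reduced system via Newton's identities.

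The main obstacle is precisely closing this reduced system in the regime $k\le s$: after the degree-$2$ reduction I must check carefully that the remaining equations are linearly independent, which becomes tightest when $O$ is maximal. In such borderline cases my fallback is to extract an additional piece of information from $X_{T_f}^{T_f}$: by Proposition \ref{prop:coeff_in_general}, the coefficient $[m_{n-k,1^k}^n]X_{T_f}^{T_f}$ expands as a sum involving $\binom{\deg(v)}{k}$, which vanishes whenever $\deg(v)<k$; since spine vertices in the smaller bipartition class have degree at most $k$, only those of degree exactly $k$ contribute from that side, separating the contributions of the two classes and supplying the missing equation needed to guarantee uniqueness.
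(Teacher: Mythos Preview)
Your overall strategy is the paper's strategy: read off $n$ and the bipartition sizes, use Proposition~\ref{prop:bipartite_degrees} to get the power sums $\til p_\ell=\sum_i d_i^\ell$ for $1\le \ell\le k$, finish immediately by Newton when $k\ge s+1$, and in the deficient regime $k\le s$ show that enough of the $d_i$'s are forced to equal $2$ so that only $k$ unknowns remain. The paper does exactly this, using $\ell_i:=f_i-1$ to write the part sizes as $s+1+\sum_{i\text{ even}}\ell_i$ and $s+1+\sum_{i\text{ odd}}\ell_i$; if $k=s+1-c$ then one of these sums equals $-c$, which forces at least $c$ interior $f_i$'s to vanish, hence at least $c$ of the $d_i$'s to equal $2$.

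Where your write-up goes astray is in the last two paragraphs. First, a small point: you cannot in general read off $E$ and $O$ \emph{separately} from the unordered pair $\{k,n-k\}$ together with $s$ (for $s$ odd the two classes have the same ``base'' size $(s+1)/2$, so you only recover $\{E,O\}$ as a set). This is harmless, because your argument never actually uses $E$ and $O$ individually; the ``WLOG'' just means ``run the same count with the parity of the smaller class,'' and the computation is symmetric.

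The real issue is that your ``main obstacle'' is not an obstacle at all, and your fallback is unnecessary. After subtracting exactly $c:=\lceil s/2\rceil-O=s+1-k$ copies of $2$ (legitimate, since at least that many $d_i$'s equal $2$), you are left with a multiset of \emph{exactly} $k$ real numbers whose first $k$ power sums you know. Newton's identities convert $p_1,\dots,p_k$ bijectively into $e_1,\dots,e_k$, which are the coefficients of the monic degree-$k$ polynomial with that multiset as roots; hence the multiset is determined. There is no ``linear independence'' to check and no ``borderline'' case: $k$ power sums always determine a size-$k$ multiset. The paper states this in one line (``knowing the first $k$ power sums of the remaining $s+1-c=k$ unknowns'' suffices), and that is the end of the proof. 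Drop the appeal to the coefficient $[m_{n-k,1^k}^n]$ entirely.
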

\begin{proof}
    If $T_f$ has $n$ total vertices and spine length $s$, then it has $s+1$ spine vertices and therefore $n-s-1$ leaf vertices. As such, the degree sequence of $T_f$ is $(d_0,d_1,\dots,d_s,1^{n-s-1})$, and in particular, this is fully determined by the $s+1$ parameters $d_0,\dots,d_s$. We know from Proposition \ref{prop:bipartite_degrees} that the self-CSF determines $\sum_{v\in V(T_\lam)}\deg(v)^j=n-s-1+\sum_{i=0}^sd_i^j$ for every integer $1\leq j\leq k$ where $k$ is the size of the smaller part in the bipartition of $T_f$. Since $n$ and $s$ are already known (recall that $\deg\big(X_{T_f}^{T_f}\big)=n$), this is equivalent to knowing $\sum_{i=0}^sd_i^j$ for these values of $j$. By Newton's identities, knowing the first $k$ power-sums of an $(s+1)$-element multiset $\{d_0,\dots,d_s\}$ is equivalent to knowing the first $k$ elementary symmetric polynomials $e_1(d_0,\dots,d_s),\dots,e_k(d_0,\dots,d_s)$ thereof, which by Vieta's formulae is in turn equivalent to knowing the first $k$ non-leading coefficients of the unique monic polynomial $$\prod_{i=0}^s(x-d_i)=x^{s+1}+\sum_{i=1}^{s+1}(-1)^ie_i(d_0,\dots,d_s)x^{s+1-i}$$ whose roots form the required multiset. As such, if $k\geq s+1$, then we are done, since we can deduce the polynomial and therefore its multiset of roots. It thus remains to prove the cases where $k<s+1$.

    For each spine vertex $v_i$ $(0\leq i\leq s)$, let $L_i$ denote the set of leaves attached to $v_i$. Then the vertex set of $T_f$ bipartitions into
    $$B_0=\bigg(\bigsqcup\limits_{i\tn{ even}}L_i\bigg)\sqcup\{v_i\mid i\tn{ odd}\}\hspace{30pt}\tn{ and }\hspace{30pt}B_1=\bigg(\bigsqcup\limits_{i\tn{ odd}}L_i\bigg)\sqcup\{v_i\mid i\tn{ even}\}.$$
    Letting $\ell_i=|L_i|-1=f_i-1$, we see that
    $$|B_0|=\sum_{i\tn{ odd}}|\{v_i\}|+\sum_{i\tn{ even}}|L_i|=\sum_{i\tn{ odd}}1+\sum_{i\tn{ even}}(1+\ell_i)=\sum_{0\leq i\leq s}1+\sum_{i\tn{ even}}\ell_i=s+1+\sum_{i\tn{ even}}\ell_i,$$
    and similarly,
    $$|B_1|=s+1+\sum_{i\tn{ odd}}\ell_i.$$
    By definition, $k=\min(|B_0|,|B_1|)$, so if $k=s+1-c$ with $c>0$, then either $\sum\limits_{i\tn{ odd}}\ell_i=-c$ or $\sum\limits_{i\tn{ even}}\ell_i=-c$. But the only way $\ell_i$ could be negative is if $f_i=0$, in which case $\ell_i=-1$. In either case, by the pigeonhole principle, this requires $f_i=0$ for at least $c$ values of $i$. Since $f_0\neq0\neq f_s$, these $c$ values of $i$ must be between 1 and $s-1$, for which $d_i=f_i+2=2$. Consequently, when $k=s+1-c$, we can already deduce the values of $c$ of the $s+1$ parameters $d_0,\dots,d_s$, and thus knowing the first $k$ power-sums of all the $d_i$'s in this case is equivalent to knowing the first $k$ power sums of the remaining $s+1-c$ unknowns. But since $s+1-c=k$, we know the values of as many power-sums as the size of the unknown multiset, so by the polynomial-root argument mentioned above, we are done.
\end{proof}

Combining both results above, we obtain the following corollary.

\begin{cor}\label{cor:caterpillar_f0fs}
    $X_{T_f}^{T_f}$ tells us the value of $(1+\mathbbm1\{\tn{rev}(f)=f\})f_0f_s$.
\end{cor}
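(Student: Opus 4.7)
The plan is to combine Propositions \ref{prop:caterpillar_auts} and \ref{prop:caterpillar_degrees} via a short algebraic manipulation. Proposition \ref{prop:caterpillar_auts} tells us that the coefficient
$$M := [m^n_{1^n}]X_{T_f}^{T_f} = (1+\mathbbm{1}\{\tn{rev}(f)=f\})\prod_{i=0}^{s}f_i!$$
can be read off directly from $X_{T_f}^{T_f}$, while Proposition \ref{prop:caterpillar_degrees} (under the same convention that $s$ is known) recovers the multiset $\{d_0,d_1,\dots,d_s\}$ of spine-vertex degrees. So the task reduces to extracting $f_0 f_s$ from $\prod_{i=0}^s f_i!$ given the multiset of $d_i$'s.

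The key manipulation is to introduce the auxiliary quantity
$$Q := \prod_{i=0}^{s}(d_i - 2)!,$$
which, being a symmetric product over all $s+1$ indices, depends only on the multiset $\{d_i\}$ and is therefore computable from the self-CSF. Substituting $f_i = d_i - 1$ for $i\in\{0,s\}$ and $f_i = d_i - 2$ for $0<i<s$, the internal factors $(d_i-2)!$ cancel, leaving
\begin{align*}
\frac{\prod_{i=0}^{s} f_i!}{Q}
= \frac{(d_0-1)!\,(d_s-1)!}{(d_0-2)!\,(d_s-2)!}
= (d_0-1)(d_s-1) = f_0 f_s.
\end{align*}
Consequently $M/Q = (1+\mathbbm{1}\{\tn{rev}(f)=f\})\,f_0 f_s$, which is the desired quantity.

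There is no real obstacle here: the one subtle point is that we know only the \emph{unordered} multiset $\{d_i\}$ and cannot single out the endpoint degrees $d_0$ and $d_s$. This is resolved automatically, since $Q$ is built symmetrically from all $s+1$ degrees, and the division cleanly produces the endpoint-only product $f_0 f_s$ without ever requiring us to identify which two elements of $\{d_i\}$ are the endpoint degrees.
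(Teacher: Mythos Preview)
Your proof is correct and follows essentially the same approach as the paper: both arguments divide the automorphism count $(1+\mathbbm1\{\tn{rev}(f)=f\})\prod_i f_i!$ by the symmetric product $\prod_i(d_i-2)!$ to isolate $(d_0-1)(d_s-1)=f_0f_s$. Your explicit remark that $Q$ is symmetric in the $d_i$'s (and hence computable from the multiset without identifying the endpoints) is a helpful clarification that the paper leaves implicit.
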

\begin{proof}
    Observe that
    $$\prod_{i=0}^sf_i!=(d_0-1)!(d_s-1)!\prod_{i=1}^{s-1}(d_i-2)!=(d_0-1)(d_s-1)\prod_{i=0}^s(d_i-2)!=f_0f_s\prod_{i=0}^s(d_i-2)!,$$
    and note that by Proposition \ref{prop:caterpillar_auts} the left hand side is $(1+\mathbbm1\{\tn{rev}(f)=f\})^{-1}[m^n_{1^n}]X_{T_f}^{T_f}$, while the product on the right hand side can be deduced from knowledge of the multiset $\{d_0,\dots,d_s\}$, which we know from Proposition \ref{prop:caterpillar_degrees} to be obtainable from $X_{T_f}^{T_f}$.
\end{proof}

\subsection{Self-CSFs for forests}\label{subsec:self-CSF_forests}

For forests, we show that the self-CSF can usually detect the number of connected components, again with the $X_{P_3}^{P_3}=X_{K_1\sqcup K_2}^{K_1\sqcup K_2}$ example as a special exception:

\begin{prop}\label{prop:forest_components}
    If $F_1$ and $F_2$ are forests with $X_{F_1}^{F_1}=X_{F_2}^{F_2}$, then $\kappa(F_1)=\kappa(F_2)$ unless $F_1=K_1\sqcup K_2$ and $F_2=P_3$, where $\kappa(G)$ is the number of connected components of $G$.
\end{prop}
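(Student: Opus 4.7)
The plan is to extract the pair $(|E(F)|,\kappa(F))$ almost entirely from the length-$2$ monomials of $X_F^F$, and then resolve a small handful of ambiguous cases using the automorphism count $[m_{1^n}^n]X_F^F$.

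First I would handle the degenerate case. Since any endomorphism $f$ of $F$ with $|f(V(F))|=1$ collapses every edge of $F$ to a single vertex (impossible once $F$ has at least one edge), $[m_n^n]X_F^F$ equals $n$ if $F$ is edgeless and $0$ otherwise. So $X_{F_1}^{F_1}=X_{F_2}^{F_2}$ forces $F_1$ and $F_2$ to be simultaneously edgeless (in which case both equal $\ol{K_n}$, with $\kappa=n$) or both to have at least one edge. Assume we are in the latter situation from now on.

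Next, arguing just as in the proof of Proposition \ref{prop:detect_trees}, any endomorphism $f$ of $F$ with $|f(V(F))|=2$ must send $F$ into some edge $\{v_1,v_2\}\in E(F)$, and for each of the $\kappa(F)$ connected components we can independently pick which part of its (unique) bipartition lands on $v_1$. Summing over edges and components, the total number of such endomorphisms is $|E(F)|\cdot 2^{\kappa(F)}=(n-\kappa(F))\cdot 2^{\kappa(F)}$, and this equals $\sum_{\lam\vdash n,\,\ell(\lam)=2}[m_\lam^n]X_F^F$. Setting $g(k):=(n-k)\cdot 2^k$, the hypothesis $X_{F_1}^{F_1}=X_{F_2}^{F_2}$ therefore gives $g(\kappa_1)=g(\kappa_2)$. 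Since $g(k+1)-g(k)=2^k(n-k-2)$, $g$ is strictly increasing on $\{0,\dots,n-2\}$ and satisfies $g(n-2)=g(n-1)=2^{n-1}$, so either $\kappa_1=\kappa_2$ (and we are done) or, without loss of generality, $(\kappa_1,\kappa_2)=(n-1,n-2)$.

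In that remaining case, $F_1$ has exactly one edge and so $F_1=(n-2)K_1\sqcup K_2$, while $F_2$ has exactly two edges and so $F_2$ is either $(n-3)K_1\sqcup P_3$ or, if $n\ge 4$, $(n-4)K_1\sqcup 2K_2$. I would finish by comparing the automorphism counts $|\Aut(F_i)|=[m_{1^n}^n]X_{F_i}^{F_i}$: a direct enumeration gives $|\Aut((n-2)K_1\sqcup K_2)|=2(n-2)!$, $|\Aut((n-3)K_1\sqcup P_3)|=2(n-3)!$, and $|\Aut((n-4)K_1\sqcup 2K_2)|=8(n-4)!$. The identity $2(n-2)!=2(n-3)!$ holds only at $n=3$, which recovers exactly the stated exception $F_1=K_1\sqcup K_2$, $F_2=P_3$; and $2(n-2)!=8(n-4)!$ reduces to $(n-2)(n-3)=4$, which has no integer solutions. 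Hence for $n\ge 4$ the candidates for $F_1$ and $F_2$ have different $m_{1^n}^n$-coefficients, contradicting $X_{F_1}^{F_1}=X_{F_2}^{F_2}$. The main obstacle is really just this enumeration of two-edge forests and the arithmetic check; the bipartite-homomorphism counting machinery was already set up in the proof of Proposition \ref{prop:detect_trees}.
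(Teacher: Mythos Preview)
Your proof is correct and follows the same overall strategy as the paper---derive the equation $(n-\kappa_1)2^{\kappa_1}=(n-\kappa_2)2^{\kappa_2}$ from length-$2$ monomial data, pin down the residual case $(\kappa_1,\kappa_2)=(n-1,n-2)$, and finish by comparing $|\Aut(F_i)|=[m_{1^n}^n]X_{F_i}^{F_i}$---but your execution of the first step is more direct. The paper looks only at the single coefficient $[m_{n-k,k}^n]$ with $n-k$ maximal, which equals $2^{e+1}|E(F)|$ with $e$ the number of components having equal bipartition parts; it then invokes Proposition~\ref{prop:diff_sequences} to show the number of components with \emph{unequal} parts is the same in $F_1$ and $F_2$, and only after multiplying through by that common factor does it reach $(n-\kappa)2^\kappa$. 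You bypass this detour entirely by summing \emph{all} length-$2$ coefficients, which counts all endomorphisms onto an edge and gives $(n-\kappa)2^\kappa$ immediately. Your monotonicity analysis of $g(k)=(n-k)2^k$ is also cleaner than the paper's divisibility argument (solving for $n$ and requiring $2^{\kappa_1-\kappa_2}-1\mid \kappa_1-\kappa_2$). The endgame---enumerating the one-edge and two-edge forests and comparing automorphism counts---is identical to the paper's. Your explicit treatment of the edgeless case via $[m_n^n]$ is a nice touch the paper leaves implicit.
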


\begin{proof}
    Consider the length 2 monomial $m_{n-k,k}^n$ showing up in $X_{F_1}^{F_1}=X_{F_2}^{F_2}$ such that $n-k$ is maximal. By the argument in the proof of Proposition \ref{prop:detect_trees}, $[m_{n-k,k}^n]X_{F_1}^{F_1} = 2^{e_1+1}\cdot |E(F_1)|$, where $e_1$ is the number of components in $F_1$ with equal part sizes, and $[m_{n-k,k}^n]X_{F_2}^{F_2} = 2^{e_2+1}\cdot |E(F_2)|$, with $e_2$ defined similarly. By Proposition \ref{prop:diff_sequences}, the number of components with unequal parts must be the same in $F_1$ as in $F_2$, so we can multiply both coefficients by the same power of 2 to get $$2^{\kappa(F_1)}\cdot |E(F_1)|=2^{\kappa(F_2)}\cdot |E(F_2)|.$$ Since $F_1$ and $F_2$ are forests, we have $|E(F_1)|=n-\kappa(F_1)$ and $|E(F_2)|=n-\kappa(F_2),$ so $$(n-\kappa(F_1))\cdot 2^{\kappa(F_1)} = (n-\kappa(F_2))\cdot 2^{\kappa(F_2)}.$$ Assuming without loss of generality that $\kappa(F_1)>\kappa(F_2),$ we can solve for $n$ to get $$n = \frac{\kappa(F_1)\cdot 2^{\kappa(F_1)}-\kappa(F_2)\cdot 2^{\kappa(F_2)}}{2^{\kappa(F_1)}-2^{\kappa(F_2)}}=\kappa(F_1)+\frac{\kappa(F_1)-\kappa(F_2)}{2^{\kappa(F_1)-\kappa(F_2)}-1}.$$ For $n$ to be an integer, we thus need $2^{\kappa(F_1)-\kappa(F_2)}-1 \le \kappa(F_1)-\kappa(F_2),$ which is only possible if $\kappa(F_1)-\kappa(F_2)=1.$ In that case, we get $$n = \kappa(F_1)+\frac{1}{2^1-1}=\kappa(F_1)+1,$$ so $\kappa(F_1)=n-1$ and $\kappa(F_2)=n-2.$ This implies that $F_1$ has a single edge and $n-2$ lone vertices, while $F_2$ has 2 edges, so it is either a length 3 path plus $n-3$ lone vertices, or 2 disconnected edges plus $n-4$ lone vertices.

    However, we know that $$|\tn{Aut}(F_1)|=[m_{1^n}^n]X_{F_1}^{F_1}=[m_{1^n}^n]X_{F_2}^{F_2}=|\tn{Aut}(F_2)|,$$ where $|\Aut(G)|$ is the number of automorphisms of a graph $G$. Thus, $F_1$ and $F_2$ must have the same number of automorphisms. We have $$|\Aut(F_1)|=2\cdot (n-2)!,$$ since the endpoints of the edge can map to themselves or each other, while the $n-2$ lone vertices can be permuted in any way. 
    
    In the case where $F_2$ has a length 3 path, we get $$|\Aut(F_2)|=2\cdot (n-3)!,$$ since the path can be flipped or not, and the lone vertices can be permuted in any order. Setting these equal gives $(n-2)!=(n-3)!$, which can only happen if $n=3$, giving $F_1=K_1\sqcup K_2$ and $F_2=P_3.$ 
    
    The other case is that $F_2$ has the 2 disconnected edges, giving $$|\Aut(F_2)|=8\cdot(n-4)!,$$ since we can choose whether the edges map to themselves or each other and whether or not to flip each edge, and then there are $(n-4)!$ permutations of the lone vertices. Setting $2\cdot(n-2)!=8\cdot(n-4)!$ gives $(n-2)(n-3)=4,$ which does not have integer solutions. Thus, this case is impossible, so we must have either $\kappa(F_1)=\kappa(F_2)$, or $F_1=K_1\sqcup K_2$ and $F_2=P_3.$
\end{proof}

\section{Power sum expansions}\label{sec:p-expansions}

In this section, we focus on power sum expansions for $H$-CSFs in certain cases. For a partition $\lam=\lam_1\dots\lam_{\ell(\lam)},$ the \emph{\tb{\tcb{power sum symmetric function}}} $p_\lam$ is defined by $$p_\lam := \prod_{i=1}^{\ell(\lam)}(x_1^{\lam_i}+x_2^{\lam_i}+\dots).$$ The involution $\omega$ is the linear map on symmetric functions sending $p_\lam$ to $$\omega(p_\lam)=(-1)^{|\lam|-\ell(\lam)}p_\lam.$$ For any graph $G$, $\omega(X_G)$ is always \emph{\tb{\tcb{$p$-positive}}}, meaning the coefficients when $\omega(X_G)$ is expanded in the $p$-basis are all nonnegative. The authors of \cite{EFHKY22} point out that $\omega(X_G^H)$ is not always $p$-positive, or even \emph{\tb{\tcb{$p$-monotone}}} (meaning the nonzero $p$-coefficients are either all positive or all negative), citing $\omega(X_{P_4}^{C_7})$ as a counterexample. However, they conjecture that the $H$-CSF is always $p$-monotone in the case where $H=S_{n+1}$ is the $(n+1)$-vertex \emph{\tb{\tcb{star graph}}}, consisting of one central vertex and $n$ other vertices connected only to the central vertex. In \S\ref{subsec:p_monotonicity_stars}, we prove their conjecture as long as $n$ is sufficiently large compared to $G$. Then in the rest of \S\ref{sec:p-expansions}, we look at power sum expansions for other cases where $H$ is still a complete bipartite graph.

\subsection{Power sum monotonicity for \texorpdfstring{$H$}{H} a star}\label{subsec:p_monotonicity_stars}

In this section we prove the $p$-monotonicity of $\omega(X_G^H)$ for $H$ a star graph that is sufficiently large compared to $G$. As the authors of \cite{EFHKY22} note, the result is automatic when $G$ is not bipartite, since then there are no homomorphisms from $G$ to a star graph, so we may assume $G$ is bipartite.

\begin{prop}\label{prop:p_monotonicity_stars}
    As long as $n$ is at least as large as the sizes of both parts in any bipartition of $G$, the $p$-coefficients in $\omega(X_G^{S_{n+1}})$ all have the same sign.
\end{prop}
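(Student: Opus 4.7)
The plan is to give an explicit formula for $X_G^{S_{n+1}}$ by exploiting the fact that every edge in $S_{n+1}$ joins the center to a leaf. Since $G$ is bipartite, in each connected component $C$ with bipartition $A_C \sqcup B_C$, a homomorphism $f\colon G \to S_{n+1}$ must send one bipartition part entirely to the center and map the other part freely to the leaves. Combining with the labeling $\phi$ and parameterizing each pair $(f,\phi)$ by the choice of a center-label $c' \in \{1,\dots,n+1\}$ together with an assignment of which side of each component maps to $c'$, I would obtain
\[
X_G^{S_{n+1}} \;=\; n!\sum_{c'=1}^{n+1}\prod_{C}\Bigl[x_{c'}^{a_C}(P-x_{c'})^{b_C}+x_{c'}^{b_C}(P-x_{c'})^{a_C}\Bigr],
\]
where $P = x_1+\dots+x_{n+1}$, $a_C = |A_C|$, and $b_C = |B_C|$ (with isolated vertices treated as degenerate components having bipartition $(1,0)$).

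Expanding the product over components via a sign vector $\vec\epsilon$ with one bit per component yields a sum of terms $x_{c'}^{S(\vec\epsilon)}(P-x_{c'})^{T(\vec\epsilon)}$, with $S(\vec\epsilon) + T(\vec\epsilon) = |V(G)|$. Applying the binomial theorem to $(P-x_{c'})^T$ and summing over $c'$ turns each contribution into $\sum_k \binom{T}{k}(-1)^k p_1^{T-k}p_{S+k}$. Applying $\omega$ then converts the alternating $(-1)^k$ into a uniform factor $(-1)^{S-1}$ out front, and the whole expression turns out to be supported only on hook partitions $(m,1^{|V(G)|-m})$. Collecting, the coefficient $c_m$ of $(m,1^{|V(G)|-m})$ in $\omega(X_G^{S_{n+1}})/n!$ comes out to be
\[
c_m \;=\; \sum_{S=1}^{m} N_S\,(-1)^{S-1}\binom{|V(G)|-S}{m-S}, \quad \text{where}\quad N_S := [t^S]\prod_C(t^{a_C}+t^{b_C}).
\]

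The key step is to show the $c_m$ all share a common sign, for which I would use a generating-function argument. Setting
\[
Q(z) \;:=\; \prod_C\Bigl[(-1)^{a_C}(1+z)^{b_C}+(-1)^{b_C}(1+z)^{a_C}\Bigr],
\]
a short computation via $\binom{N}{k} = [z^k](1+z)^N$ and the substitution $t = -1/(1+z)$ yields $c_m = -[z^{|V(G)|-m}]Q(z)$. For each factor of $Q(z)$, assuming WLOG $a_C \leq b_C$, one factors out $(-1)^{a_C}(1+z)^{a_C}$ to rewrite the bracket as $(-1)^{a_C}(1+z)^{a_C}\bigl[(1+z)^{b_C-a_C}+(-1)^{b_C-a_C}\bigr]$; since $(1+z)^d \pm 1$ has nonnegative coefficients in both parities of $d$, each factor of $Q(z)$ has all coefficients of the single sign $(-1)^{\min(a_C,b_C)}$. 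Multiplying shows $Q(z)$ has all coefficients of the sign $(-1)^{\sum_C\min(a_C,b_C)}$, and hence the $c_m$ all share the opposite sign, establishing $p$-monotonicity.

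The main obstacle I expect is bookkeeping at the boundary of the extension to countably many variables: the step summing $\sum_{c'} x_{c'}^S(P-x_{c'})^T$ would produce the ill-defined $p_0$ whenever $S(\vec\epsilon)=0$. Fortunately, this only happens when $G$ is edgeless, since any non-trivial bipartite component has both parts of size $\geq 1$, forcing $S(\vec\epsilon) \geq 1$; and the edgeless case degenerates to $X_G^{S_{n+1}} = (n+1)!\,p_1^{|V(G)|}$, which is trivially $p$-monotone. The hypothesis that $n$ is at least as large as both bipartition parts of $G$ is what ensures the finite-variable $p$-expansion used throughout agrees with the intended symmetric-function extension of $X_G^{S_{n+1}}$.
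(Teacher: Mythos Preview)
Your proof is correct and follows essentially the same approach as the paper: both derive the hook-shaped $p$-expansion by sending one side of each component's bipartition to the center and expanding, then factor the resulting coefficient over components and observe that each factor $(-1)^{a_C}\binom{b_C}{\cdot}+(-1)^{b_C}\binom{a_C}{\cdot}$ has the constant sign $(-1)^{\min(a_C,b_C)}$. Your packaging via the generating function $Q(z)=\prod_C\bigl[(-1)^{a_C}(1+z)^{b_C}+(-1)^{b_C}(1+z)^{a_C}\bigr]$ and the identity $(1+z)^{d}+(-1)^{d}\ge 0$ coefficientwise is a clean way to phrase exactly the domination argument the paper spells out coefficient-by-coefficient.
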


\begin{proof}
    We can write out explicitly what the $p$-expansion is in this case:

    \begin{lemma}\label{lem:star_p_expansion}
        If $G^1,\dots,G^{\ell}$ are the connected components of $G$, $k_1^i\ge k_2^i$ are the sizes of the two parts $G_1^i$ and $G_2^i$ in the bipartition of $G^i$, and $n\ge k_1^1+\dots + k_1^\ell$, then
        \begin{align*}
            X_G^{S_{n+1}} = \sum_{(b_1,\dots,b_\ell)\in\{1,2\}^\ell}n!\sum_{j=k_{b_1}^1+\dots+k_{b_\ell}^\ell}^{|V(G)|}(-1)^{j-(k^1_{b_1}+\dots+k^\ell_{b_\ell})}p_{j,1^{|V(G)|-j}}\sum_{\substack{j_1,\dots,j_\ell:\\ k_{b_i}^i\le j_i\le k^i_1 + k^i_2,\\ j_1+\dots+j_\ell=j}} \prod_{i=1}^\ell \binom{k_{3-b_i}^i}{j_i-k_{b_i}^i}.
        \end{align*}
    \end{lemma}

    \begin{proof}
        Recall that we defined the $H$-CSF by first defining it in terms of the $|V(H)|$ variables $x_1,x_2,\dots,x_{|V(H)|},$ and then using the $m$-expansion to extend it to a countably infinite variable set. Since $|V(H)|=n+1$ for $H=S_{n+1}$, this means we should start with the variable set $x_1,x_2,\dots,x_{n+1}$, and then extend to a countably infinite variable set by keeping the $m$-expansion the same. All our monomials will thus have length at most $n+1$. The span of the $m_\lam$'s with $\ell(\lam)\le n+1$ is the same as the span of the $p_\lam$'s with $\ell(\lam)\le n+1$, since we can write each $m_\lam$ as a linear combination of power sums of the same length or shorter, and similarly we can write each $p_\lam$ as a sum of monomials of the same length or shorter. Thus, there will be a unique way to write our $H$-CSF $X_G^{S_{n+1}}(x_1,x_2,\dots,x_{n+1})$ as a linear combination of $p_\lam$'s with $\ell(\lam)\le n+1$, and the $p$-expansion will then stay the same when we extend to infinitely many variables. Thus, the $p$-expansion for $X_G^{S_{n+1}}(x_1,x_2,\dots)$ will be the same as the unique way to write $X_G^{S_{n+1}}(x_1,x_2,\dots,x_{n+1})$ as a sum of power sum terms that all have length at most $n+1$, so we will consider the $p$-expansion for $X_G^{S_{n+1}}(x_1,x_2,\dots,x_{n+1})$ using monomials of length at most $n+1$. (We need to say this because if we allow power sum terms of length more than $n+1$, then there will actually be multiple possible $p$-expansions when we restrict to $n+1$ variables, but only the one where all $p$-terms have length at most $n+1$ will match the $p$-expansion when we extend to an infinite variable set.)

        Note that in any homomorphism from $G$ to $S_{n+1}$, for every $i$, one of $G_1^i$ or $G_2^i$ must map entirely onto the central vertex $v$ of the star, while the other part of $G^i$ must map onto some of the remaining vertices $S_{n+1}-v$. The tuple $(b_1,\dots,b_\ell)\in\{1,2\}^n$ corresponds to the case where $G_{b_i}^i$ maps to $v$ and the other part $G_{3-b_i}^i$ maps to $S_{n+1}-v$, so we will sum over those cases. (We can write $G_{3-b_i}^i$ to denote the other part of the bipartition of $G^i$ that is not $G_{b_i}^i$, since $3-1=2$ and $3-2=1$.)
        
        We then use inclusion-exclusion on the events that some of the vertices from the wrong part $G_{3-b_i}^i$ of one or more of the $G^i$'s also get mapped onto $v$. We start by adding the term $p_{j,1^{|V(G)|-j}}$ with $j=k_{b_1}^1+\dots+k_{b_\ell}^\ell$, because this corresponds to mapping all of the desired $j$ vertices onto $v$ (since $p_j$ represents mapping $j$ vertices of $G$ to the same label, or equivalently to the same vertex of $S_{n+1}$), and then assigning the remaining vertices of $G$ arbitrarily to any vertices of $S_{n+1}$ (since $p_{1^{|V(G)|-j}}$ represents independently assigning a label to each of the $|V(G)|-j$ vertices of $G$).  Note that these $p$-terms so far all satisfy $\ell(j,1^{|V(G)|-j})\le n+1$ by our assumption that $n\ge k_1^1+\dots+k_1^\ell$, since the  length is $$\ell(j,1^{|V(G)|-j})=1+|V(G)|-j=1+|V(G)|-(k_{b_1}^1+\dots+k_{b_\ell}^\ell)=1+k_{3-b_1}^1+\dots+k_{3-b_\ell}^\ell\le 1 + n,$$ using the fact that $$k_{b_1}^1+\dots+k_{b_\ell}^\ell +k_{3-b_1}^1 + \dots + k_{3-b_\ell}^\ell = |V(G)|,$$ since both sides count each vertex of $G$ exactly once.
        
        This enforces the condition that the desired vertices map to $v$, but it includes terms where additional vertices also map to $v$, so we need to alternately add or subtract the terms where a particular set of extra vertices also map to $v$. The terms we add or subtract are all of the form $p_{j,1^{|V(G)|-j}}$, since that is the generating series for maps sending at least $j$ total vertices to $v$. In that case, there are $j-(k_{b_1}^1+\dots+k_{b_\ell}^\ell)$ extra vertices getting sent to $v$, so this case falls into $j-(k_{b_1}^1+\dots+k_{b_\ell}^\ell)$ of the sets that each correspond to the maps sending one particular extra vertex to $v$. Thus, the sign of this term when we use inclusion-exclusion is $(-1)^{j-(k_{b_1}^1+\dots+k_{b_\ell}^\ell)}$. These new $p$-terms also all have length at most $n+1$ because of the assumption $n\ge k_1^1+\dots+k_1^\ell$.
        
        We also need to distribute the $j$ vertices mapping to $v$ among the $G^i$'s, so if we want $j_i$ of the vertices to come from $G^i$ for each $i$, we need $j_1+\dots+j_\ell=j.$ We also need $k_{b_i}^i\le j_i\le k_1^i+k_2^i$, because we require the $k_{b_i}$ vertices in $G_{b_i}^i$ to all map to $v$, and then the remaining $j_i-k_{b_i}^i$ vertices must be chosen from among the $k_{3-b_i}^i$ vertices in $G_{3-b_i}^i$. Thus, there are $\binom{k_{3-b_i}^i}{j_i-k_{b_i}^i}$ ways to choose which extra vertices in $G^i$ map to $v$ for a particular $j_i$, and we then need to multiply over all $i$ and then sum over all possible tuples of $j_i$'s.
        
        Finally, to explain the $n!$, recall that we are currently using the variables $x_1,x_2,\dots,x_{n+1}$. A given $p_{j,1^{|V(G)|-j}}$ term corresponds to assigning to a particular $j$ vertices in $G$ the same \emph{label} between 1 and $n+1$, and then assigning arbitrary labels to the remaining $|V(G)|-j$ vertices of $G$. However, we also need to choose which labels correspond to which vertices of $S_{n+1}$. For each monomial $x_{i_1}^jx_{i_2}x_{i_3}\dots x_{i_{|V(G)|-j+1}}$ that we get from expanding $p_{j,1^{|V(G)|-j}}$, we need $x_{i_1}$ to correspond to the central vertex $v$ of $S_{n+1}$, so we need to assign label $i_1$ to vertex $v$, but the remaining $n$ labels can be assigned arbitrarily to the other $n$ vertices of $S_{n+1}$. Thus, each of our $p_{j,1^{|V(G)|-j}}$ terms actually corresponds to $n!$ different labelings of $S_{n+1}$, so we need to multiply the whole expression by $n!$.
    \end{proof}

    Now to prove Proposition \ref{prop:p_monotonicity_stars}, we need to apply $\omega$ to the terms from Lemma \ref{lem:star_p_expansion}. For $\lam = j,1^{|V(G)|-j}$, we have $|\lam|=|V(G)|$ and $\ell(\lam)=|V(G)|-j+1$, so $|\lam|-\ell(\lam)=j-1$. Thus, $\omega(p_{j,1^{|V(G)|-j}}) = (-1)^{j-1}p_{j,1^{|V(G)|-j}}.$ Now if we apply $\omega,$ factor out the $n!$, and then group the terms in the $p$-expansion from Lemma \ref{lem:star_p_expansion} first according to $j$ and then according to the tuple $(j_1,\dots,j_\ell)$, we get $$[p_{j,1^{|V(G)|-j}}]\omega(X_G^{S_{n+1}})=n!\sum_{\substack{j_1,\dots,j_\ell:\\ k_{2}^i\le j_i\le k^i_1 + k^i_2,\\ j_1+\dots+j_\ell=j}}\sum_{(b_1,\dots,b_\ell)\in\{1,2\}^\ell}(-1)^{k^1_{b_1}+\dots+k^\ell_{b_\ell}-1}\prod_{i=1}^\ell \binom{k_{3-b_i}^i}{j_i-k_{b_i}^i}.$$ We can replace the $k_{b_i}^i\le j_i$ condition with $k_2^i\le j_i$, since we assumed $k_2^i\le k_1^i$ and hence $k_2^i\le k_{b_i}^i$ for either choice of $b_i\in\{1,2\}.$ This does not introduce any extra terms, because if $j_i<k_i^1$ and $b_i=1$, then $j_i-k_{b_i}^i=j_i-k_1^i$ will be negative, so the binomial coefficient $\binom{k_{3-b_i}^i}{j_i-k_{b_i}^i}$ simply becomes 0 and the extra term we might have introduced thus goes away.

    Now for a fixed choice of $j_1,\dots,j_\ell$, each $b_i$ can be chosen independently from all the other $b_i$'s, so we can actually factor the resulting sum of terms over $i=1,2,\dots,\ell$, where the two terms in the $i^{\text{th}}$ factor correspond to choosing either $b_i=1$ or $b_i=2$. This gives $$[p_{j,1^{|V(G)|-j}}]\omega(X_G^{S_{n+1}})=-n!\sum_{\substack{j_1,\dots,j_\ell:\\ k_{2}^i\le j_i\le k^i_1 + k^i_2,\\ j_1+\dots+j_\ell=j}}\prod_{i=1}^\ell\left((-1)^{k_1^i}\binom{k_2^i}{j_i-k_1^i}+(-1)^{k_2^i}\binom{k_1^i}{j_i-k_2^i}\right).$$ Rewriting the combinations using the identity $\binom ab = \binom a{a-b}$ gives $$[p_{j,1^{|V(G)|-j}}]\omega(X_G^{S_{n+1}})=-n!\sum_{\substack{j_1,\dots,j_\ell:\\ k_{2}^i\le j_i\le k^i_1 + k^i_2,\\ j_1+\dots+j_\ell=j}}\prod_{i=1}^\ell\left((-1)^{k_1^i}\binom{k_2^i}{k_1^i+k_2^i-j}+(-1)^{k_2^i}\binom{k_1^i}{k_1^i+k_2^i-j}\right).$$ If $k_i^1$ and $k_2^i$ have the same parity, then both terms have the same sign. If they have opposite parities, then the term with sign $(-1)^{k_2^i}$ dominates, since $k_1^i\ge k_2^i$ and $\binom ac\ge \binom bc$ whenever $a\ge b$. Thus, the overall sign is $-(-1)^{k_2^1}\dots(-1)^{k_2^\ell}=(-1)^{k_2^1+\dots+k_2^\ell-1}.$ This sign is independent of $j$, so all $p$-terms of $\omega(X_G^{S_{n+1}})$ have the same sign, and thus $\omega(X_G^{S_{n+1}})$ is $p$-monotone, as claimed.
\end{proof}

The difference in cases where $n<k_1^1+\dots+k_1^\ell$ is that we can still write $X_G^{S_{n+1}}(x_1,x_2,\dots,x_{n+1})$ as a sum of the same $p_\lam$'s as above, but the resulting partitions $\lam$ will not all have length $n+1$ or less, so that $p$-expansion will not be the same as the $p$-expansion for the $X_G^{S_{n+1}}(x_1,x_2,\dots)$ when we extend to infinitely many variables. 

As an example to illustrate this issue, note that if we restrict to just the single variable $x_1$, then $p_{1^2}(x_1)=p_2(x_1)=x_1^2$ does not have a unique $p$-expansion, but it does have a unique $m$-expansion $x_1^2=m_2(x_1)$. The corresponding symmetric function in infinitely many variables would then be $m_2(x_1,x_2,\dots)=x_1^2+x_2^2+\cdots.$ The correct $p$-expansion is then $p_2$, not $p_{1^2}$, since $p_2(x_1,x_2,\dots)=x_1^2+x_2^2+\cdots$ has exactly the terms we want, while $p_{1^2}(x_1,x_2,\dots)=(x_1+x_2+\dots)^2=x_1^2+x_2^2+\dots+2(x_1x_2+\cdots)$ has extra terms that we do not want. The correct $p$-expansion to use is thus $p_2$ rather than $p_{1^2}$, which corresponds to 2 being a partition of length 1 (which equals our starting number of variables), while $1^2$ is a partition of length 2 and so should not be allowed (since its length exceeds our starting number of variables).

If $n<k_1^1+\dots+k_1^\ell$, one way we could instead find the $p$-expansion of $X_G^{S_{n+1}}$ would be to start with the $p$-expansion from Lemma \ref{lem:star_p_expansion}, and then convert each $p$-term into a sum of $p$-terms of length at most $n+1$, although the resulting expression will be more complicated.

\subsection{Power sum expansion for \texorpdfstring{$X_G^H$}{XGH} with \texorpdfstring{$H=K_{2,n}$}{H=K(2,n)}}\label{subsec:p_K_2,n}

We can use similar ideas to the star case to write down a $p$-expansion for $X_G^{K_{2,n}}$, for $n$ sufficiently large. Again, we have $X_G^{K_{2,n}}=0$ unless $G$ is bipartite, so we may assume $G$ is bipartite.

\begin{prop}\label{prop:K_2,n_p-expansion}
    With the notation from Lemma \ref{lem:m21n-2_XTT} and again assuming $n\ge k_1^1+\dots+k_1^\ell$, for each pair $j\ge j'\ge 2$ with $j+j'\ge k_2^1+\dots+k_2^\ell,$ we have $$[p_{j,j',1^{|V(G)|-(j+j')}}]X_G^{K_{2,n}} = 2\cdot n!\binom{j+j'}{j}\sum_{\substack{(b_1,\dots,b_\ell)\in\{1,2\}^\ell,\\
    j_1+\dots+j_\ell=j+j',\\k_{b_i}^i\le j_i\le k_1^i+k_2^i}}(-1)^{j+j'-(k_{b_1}^1+\dots+k_{b_\ell}^\ell)}\prod_{i=1}^\ell \binom{k_{3-b_i}^i}{j_i-k_{b_i}^i},$$ while for each $j\ge k_2^1+\dots+k_2^\ell,$ we have
    \begin{align*}
        [p_{j,1^{|V(G)|-j}}]X_G^{K_{2,n}} = &\ 2\cdot (n+1)!\sum_{\substack{(b_1,\dots,b_\ell)\in\{1,2\}^\ell,\\ j_1+\dots+j_\ell=j,\\ k_{b_i}^i\le j_i\le k_1^i+k_2^i}}(-1)^{j-(k_{b_1}^1+\dots+k_{b_\ell}^\ell)}\prod_{i=1}^\ell \binom{k_{3-b_i}^i}{j_i-k_{b_i}^i} 
        \\
        &+ 2\cdot n! \cdot (j+1)\sum_{\substack{(b_1,\dots,b_\ell)\in\{1,2\}^\ell, \\ j_1+\dots+j_\ell=j+1,\\ k_{b_i}^i\le j_i\le k_1^i+k_2^i}}(-1)^{j+1-(k_{b_1}^1+\dots+k_{b_\ell}^\ell)}\prod_{i=1}^\ell\binom{k_{3-b_i}^i}{j_i-k_{b_i}^i}\\ 
        &-\sum_{j'=2}^{j-2}[p_{j',j-j',1^{|V(G)|-(j+j')}}]X_G^{K_{2,n}},
    \end{align*} 
    and all other $p$-coefficients are 0.
\end{prop}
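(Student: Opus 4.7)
The plan is to adapt the inclusion--exclusion argument from the proof of Lemma \ref{lem:star_p_expansion}, with the size-$2$ side $\{u_1,u_2\}$ of $K_{2,n}$ playing the role of the single center vertex of the star. As in that proof, I would first work in $|V(K_{2,n})|=n+2$ variables and invoke the uniqueness of $p$-expansions with parts of length $\le n+2$; the hypothesis $n\ge k_1^1+\dots+k_1^\ell$ ensures that every $p$-partition produced satisfies this bound.

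For each choice $(b_1,\dots,b_\ell)\in\{1,2\}^\ell$, a compatible homomorphism $G\to K_{2,n}$ is determined by a free assignment in $\{u_1,u_2\}$ for each of the $a(b):=\sum_i k_{b_i}^i$ vertices of $\bigcup_i G_{b_i}^i$ together with a free assignment in $\{w_1,\dots,w_n\}$ for each of the $c(b):=|V(G)|-a(b)$ vertices of $\bigcup_i G_{3-b_i}^i$. Summing over all $2\,n!$ labelings $\phi$ of $V(K_{2,n})$, this yields
\[
X_G^{K_{2,n}} = 2\,n!\sum_{|T|=2}\sum_{(b_i)} x_T^{a(b)}\,x_{\bar T}^{c(b)},
\]
where $T$ ranges over $2$-element subsets of $\{1,\dots,n+2\}$ and $x_T=\sum_{i\in T}x_i$, $x_{\bar T}=\sum_{i\notin T}x_i$.

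Next, using $x_{\bar T}=p_1-x_T$ and the binomial theorem, each summand becomes $\sum_m(-1)^m\binom{c(b)}{m}p_1^{c(b)-m}x_T^{a(b)+m}$. The Vandermonde identity $\binom{c(b)}{m}=\sum_{(j_i)}\prod_i\binom{k_{3-b_i}^i}{j_i-k_{b_i}^i}$, over tuples with $\sum_i j_i=a(b)+m$ and $k_{b_i}^i\le j_i\le k_1^i+k_2^i$, gives the $(b_i),(j_i)$ sum structure appearing in the claim. Finally, a direct computation of $\sum_{|T|=2}x_T^J$ in the $p$-basis (splitting the $(x_s+x_t)^J$ expansion into diagonal $s=t$ contributions and off-diagonal ones) yields
\[
\sum_{|T|=2}x_T^J=(n+2-2^{J-1})\,p_J+\tfrac{1}{2}\sum_{l=1}^{J-1}\binom{J}{l}p_l\,p_{J-l}.
\]

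The final step is to collect coefficients of each $p$-partition. With $J:=a(b)+m$, the $p_J$ term contributes to the single-big-part partition $(J,1^{|V(G)|-J})$, while $p_l p_{J-l}$ contributes to $(l,J-l,1^{|V(G)|-J})$ for $1\le l\le J-1$. Thus the two-big-part family $(j,j',1^{|V(G)|-(j+j')})$ with $j\ge j'\ge 2$ receives contributions only from $J=j+j'$ with $\{l,J-l\}=\{j,j'\}$, yielding the clean $2\,n!\binom{j+j'}{j}$-weighted formula. The single-big-part family $(j,1^{|V(G)|-j})$ instead receives contributions from both $J=j$ (via the $p_J$ term with coefficient $n+2-2^{j-1}$, plus the $p_l p_{J-l}$ terms with $l\in\{1,j-1\}$) and $J=j+1$ (via $p_l p_{J-l}$ with $l\in\{1,j\}$, combined coefficient $j+1$). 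A key observation is that the total coefficient aggregated over all partitions coming from $J=j$ equals $n+1$, explaining the $2(n+1)!$ prefactor in the first term; the formula is then naturally organized as ``total $J=j$ aggregate $+$ $J=j+1$ contribution $-$ aggregated two-big-part partitions sharing this $J$.'' The main obstacle is precisely this combinatorial bookkeeping: the same partition $(j,1^{|V(G)|-j})$ can arise from two distinct values of $J$, and collapses between ``two-big-part with a part equal to $1$'' and ``single-big-part'' partitions must be tracked carefully. Verifying that all $p$-partitions outside the two listed families have zero coefficient is then immediate from the fact that the expansion only produces partitions of the forms $(J,1^{|V(G)|-J})$ and $(l,J-l,1^{|V(G)|-J})$.
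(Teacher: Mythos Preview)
Your approach is correct in outline but takes a genuinely different route from the paper. The paper argues combinatorially throughout: for each $(b_1,\dots,b_\ell)$ it writes down $p$-terms directly encoding ``$j$ vertices go to $v$, $j'$ go to $w$, the rest go anywhere,'' runs inclusion--exclusion on the number of \emph{extra} vertices landing in $\{v,w\}$, and then separately corrects for the event that the $p_j$ and $p_{j'}$ factors pick the same variable (this correction is the source of the subtracted sum of two-big-part coefficients in the hook formula). You instead package the whole thing into the closed form $2\,n!\sum_{|T|=2}\sum_{(b_i)}x_T^{a(b)}(p_1-x_T)^{c(b)}$ and read off $p$-coefficients from your explicit identity for $\sum_{|T|=2}x_T^J$. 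This is more systematic---once that identity is in hand, the ``same-variable'' correction is built in and no separate step is needed---while the paper's argument attaches a direct combinatorial meaning to each piece of the formula.

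One point in your final bookkeeping needs care. You claim the hook coefficient organizes as ``total $J{=}j$ aggregate $+$ $J{=}j{+}1$ contribution $-$ two-big-part partitions with this $J$,'' using that the $p$-coefficients of $\sum_{|T|=2}x_T^{\,j}$ sum to $n+1$. But the $J=j$ aggregate does not split cleanly into the $(j,1^{|V(G)|-j})$ piece plus two-big-part pieces: the $l=1$ and $l=j-1$ terms of $\tfrac12\sum_{l}\binom{j}{l}p_lp_{j-l}$ collapse to $j\cdot p_{j-1,1^{|V(G)|-j+1}}$, a \emph{hook} with big part $j-1$, not a two-big-part partition. So your reorganization is off by exactly this $2\,n!\,j\,D_j$ contribution, and you need to track it. Relatedly, for $j=j'$ the set $\{l,J-l\}=\{j,j'\}$ is a singleton, so your own derivation gives $\tfrac12\binom{2j}{j}$ there rather than $\binom{2j}{j}$; the ``clean $2\,n!\binom{j+j'}{j}$'' summary is only literal when $j>j'$. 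These are bookkeeping issues, not flaws in the method, but they are precisely where a reader will want to see the details.
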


\begin{proof}
    As noted above, the $p$-expansion for $X_G^{K_{2,n}}$ is the same as the $p$-expansion if we restrict to $|V(H)|=n+2$ variables and only allow $p$-terms of length at most $n+2$, and all the $p$-terms above do have the required length, because for all our choices of $j$ and $j'$ we have $$n\ge k_1^1+\dots+k_1^\ell=|V(G)|-(k_2^1+\dots+k_2^\ell)\ge |V(G)|-(j+j'),$$ which implies that our partitions $j,j',1^{|V(G)|-(j+j')}$ and $j,1^{|V(G)|-j}$ all have length at most $n+2$, since $$\ell(j,j',1^{|V(G)|-(j+j')}) = |V(G)|-(j+j')+2\le n+2.$$ Then, like for $H=S_{n+1}$, we will sum over tuples  $(b_1,\dots,b_\ell)\in\{1,2\}^\ell$, corresponding to the case where $G_{b_i}^i$ maps to the size 2 part $\{v,w\}$ of $H$, while $G_{3-b_i}^i$ maps onto the other part of $H$. In our initial $p$-terms, we will enforce the condition that all vertices of $G_{b_i}^i$ map onto $\{v,w\}$ while allowing the vertices of $G_{3-b_i}^i$ to be mapped arbitrarily, and then we will use inclusion-exclusion to eliminate the unwanted terms where some vertices of $G_{3-b_i}^i$ also map to $v$ or $w$.

    Our inclusion-exclusion is again based on the overlapping events where each particular vertex from one of the $G_{3-b_i}^i$'s is required to map to $v$ or $w$. The $p_{j,j',1^{|V(G)|-(j+j)'}}$ terms will correspond to cases where $j$ total vertices map to $v$ and $j'$ vertices map to $w$ or vice versa, meaning $j+j'-(k_{b_1}^1+\dots+k_{b_\ell}^\ell)$ extra vertices map to $\{v,w\}$ in addition to the $k_{b_1}^1+\dots+k_{b_\ell}^\ell$ that are supposed to. Thus, each such term will have sign $(-1)^{j+j'-(k_{b_1}^1+\dots+k_{b_\ell}^\ell)}$. We then need to choose how to distribute those $j+j'$ vertices among the $G^i$'s, so we can sum over cases such that $j_i$ of them come from $G^i$. In such a case, we need $j_1+\dots+j_\ell=j+j'$ since there are $j+j'$ total vertices mapping to $\{v,w\}$. 
    
    If we want $j_i$ of the vertices to come from $G^i$, then we must have $j_i\ge k_{b_i}^i$, since we are already mapping the $k_{b_i}^i$ vertices in $G_{b_i}^i$ to $\{v,w\}$, and we must then choose an additional $j_i-k_{b_i}^i$ vertices from the $k_{3-b_i}^i$ vertices in $G_{3-b_i}^i$ to also map to $\{v,w\}$, which can be done in $\binom{k_{3-b_i}^i}{j_i-k_{b_i}^i}$ ways. Then we need to distribute those $j+j'$ vertices such that $j$ of them map to $v$ and $j'$ to $w$ or vice versa, which can be done in $2\cdot\binom{j+j'}j$ ways, where the extra 2 is to choose which vertex set maps to $v$ and which vertex set to $w$. For each such $p_{j,j',1{|V(G)}-(j+j')}$ term, we need to choose which of the remaining $n$ vertices corresponds to which label, which can be done in $n!$ ways. The labels of $v$ and $w$ are already determined for each monomial we get when expanding $p_{j,j',1{|V(G)}-(j+j')}=p_jp_{j'}p_1^{|V(G)|-(j+j')}$, because if we choose the $x_i^j$ term from the $p_j$ factor, then $v$ must have label $i$, and if we choose $x_{i'}^{j'}$ from the $p_{j'}$ factor, then $w$ must have label $i'$. In the case $j=j'$, we can still say that there are initially 2 ways to choose which vertex set maps onto $v$ and which one onto $w$, and then we can require that in the product $p_jp_jp_1^{|V(G)|-2j},$ the $x_{i}^j$ chosen from first $p_j$ factor corresponds to the label $i$ assigned to $v$ while the $x_{i'}^j$ chosen from the second $p_j$ factor corresponds to the label $i'$ assigned to $w$, so the same formula holds when $j=j'$.
    
    However, for each such term, we also want to enforce that the variable $x_i$ chosen from the $p_j$ factor is not the same as the variable $x_{i'}$ chosen from the $j'$ factor, because then we would have $j+j'$ vertices mapping onto one of $v$ or $w$, instead of $j$ mapping onto $v$ and $j'$ onto $w$. Thus, we need to subtract a $p_{j+j',1^{|V(G)|-(j+j')}}$ term for each of the above pairs $j,j'$, which explains the subtracted sum on the final line of our expression for the $p_{j,1^{|V(G)|-j}}$ coefficient.
    
    On the other hand, we do also want to get $p_{j,1^{|V(G)|-j}}$ terms from cases where $j$ vertices map onto one of $v$ or $w$ and none onto the other, or where $j$ vertices map onto one of $v$ or $w$ and 1 vertex maps onto the other. In the latter case, we get the term on the second line of our expression for the $p_{j,1^{|V(G)|-j}}$ coefficient, by the same argument as for the $p_{j,j',1^{|V(G)|-(j+j')}}$ case, but with $j'=1$.

    In the other case, we need to map $j$ of the vertices to just one of $v$ or $w$, and the other vertices can be mapped arbitrarily. The sign is then $(-1)^{j-(k_{b_1}^1+\dots+k_{b_\ell}^\ell)}$, since $j-(k_{b_1}^1+\dots+k_{b_\ell}^\ell)$ extra vertices are mapping onto $v$ or $w$ in addition to the ones that are supposed to. There are 2 ways to choose which of $v$ or $w$ those $j$ vertices map onto, and then $(n+1)!$ ways to assign labels to the remaining $n+1$ variables. We must then choose how to distribute the $j$ vertices among the $G^i$'s, so we can assume $j_i$ of them come from $G^i$ for each $i=1,\dots,\ell$, in which case we need $j_1+\dots+j_\ell=j$. Then we need to choose for each $i$ which $j_i-k_{b_i}^i$ vertices out of the $k_{3-b_i}^i$ vertices in $G_{3-b_i}^i$ map onto $v$ or $w$ in addition to the $k_{b_i}^i$ vertices from $G_{b_i}^i$, which can be done in $\binom{k_{3-b_i}^i}{j_i-k_{b_i}^i}$ ways for each $i$. Putting this together gives the first line in our expression for the coefficient $[p_{j,1^{|V(G)|-j}}]X_G^{K_{2,n}}$. No other terms arise from our inclusion-exclusion, so these are the only terms we need.
\end{proof}

While it is not so easy to tell the signs of the $p_{j,1^{|V(G)|-j}}$ terms from this formula since they are a combination of several different sums, we can deduce that the $p_{j,j',1^{|V(G)|-(j+j')}}$ terms all have the same sign in $\omega(X_G^{K_{2,n}})$ by the same argument as in the $H=S_{n+1}$ case:

\begin{cor}\label{cor:p_monotonicity_K(2,n)}
    If $n\ge k_1^1+\dots+k_1^\ell$, then all power sum terms in $X_G^{K_{2,n}}$ have at most two parts not equal to 1, and all the ones with two parts not equal to 1 have the same sign in $\omega(X_G^{K_{2,n}})$.
\end{cor}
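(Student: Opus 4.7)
The plan is to read off both assertions directly from Proposition \ref{prop:K_2,n_p-expansion}, using the same sign-analysis trick that proved Proposition \ref{prop:p_monotonicity_stars}. The first assertion is immediate: Proposition \ref{prop:K_2,n_p-expansion} enumerates every nonzero $p$-coefficient of $X_G^{K_{2,n}}$, and each indexing partition has shape $(j,j',1^{|V(G)|-(j+j')})$ or $(j,1^{|V(G)|-j})$, so at most two parts exceed $1$.

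For the second assertion, I would first compute the $\omega$-sign of $p_{j,j',1^{|V(G)|-(j+j')}}$: its length is $|V(G)|-(j+j')+2$, so $|\lam|-\ell(\lam)=j+j'-2$, giving $\omega(p_{j,j',1^{|V(G)|-(j+j')}})=(-1)^{j+j'}p_{j,j',1^{|V(G)|-(j+j')}}$. Plugging this into the formula of Proposition \ref{prop:K_2,n_p-expansion} and factoring out the two copies of $(-1)^{j+j'}$, the sign of $[p_{j,j',1^{|V(G)|-(j+j')}}]\omega(X_G^{K_{2,n}})$ is controlled by
$$\sum_{\substack{j_1+\dots+j_\ell=j+j'\\ k_2^i\le j_i\le k_1^i+k_2^i}}\prod_{i=1}^\ell\left((-1)^{k_1^i}\binom{k_2^i}{j_i-k_1^i}+(-1)^{k_2^i}\binom{k_1^i}{j_i-k_2^i}\right),$$
where, exactly as in the star-case proof, I have relaxed $k_{b_i}^i\le j_i$ to $k_2^i\le j_i$ (the extra summands vanish because a binomial with negative bottom is $0$) and factored the inner sum over the independent choices $b_i\in\{1,2\}$.

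The remaining step is the dominance argument from Proposition \ref{prop:p_monotonicity_stars}: rewrite the two binomials in each factor using $\binom{a}{b}=\binom{a}{a-b}$ to put them on the common bottom $k_1^i+k_2^i-j_i$, and note that since $k_1^i\ge k_2^i$ the second binomial dominates the first in absolute value, so each factor is either $0$ or of sign $(-1)^{k_2^i}$. Every surviving summand therefore contributes sign $(-1)^{k_2^1+\dots+k_2^\ell}$, and combined with the two factored-out $(-1)^{j+j'}$'s this shows that every coefficient $[p_{j,j',1^{|V(G)|-(j+j')}}]\omega(X_G^{K_{2,n}})$ carries the common sign $(-1)^{k_2^1+\dots+k_2^\ell}$, independent of $j$ and $j'$.

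I do not expect any serious obstacle: the only item to double-check is that the dominance argument still goes through when the effective bottom $k_1^i+k_2^i-j_i$ varies with $i$ rather than being one fixed value, but that is a purely local check in each factor that uses only $k_1^i\ge k_2^i$. In effect, the content of the corollary is that the $p$-expansion in Proposition \ref{prop:K_2,n_p-expansion} is already in precisely the shape where the star-case sign argument transfers almost verbatim, with the single parameter $j$ from Proposition \ref{prop:p_monotonicity_stars} replaced by the pair $(j,j')$ (equivalently, by their sum $j+j'$).
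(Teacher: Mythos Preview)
Your proposal is correct and follows essentially the same route as the paper's proof: both read the first claim directly off the shape of the partitions in Proposition \ref{prop:K_2,n_p-expansion}, compute $\omega(p_{j,j',1^{|V(G)|-(j+j')}})=(-1)^{j+j'}p_{j,j',1^{|V(G)|-(j+j')}}$, factor the sum over $(b_1,\dots,b_\ell)$ into the same product of two-term brackets, and then apply the dominance argument $k_1^i\ge k_2^i\Rightarrow\binom{k_1^i}{c}\ge\binom{k_2^i}{c}$ to conclude each nonzero bracket has sign $(-1)^{k_2^i}$, giving overall sign $(-1)^{k_2^1+\cdots+k_2^\ell}$. Your phrasing about ``two copies of $(-1)^{j+j'}$'' is slightly awkward (they simply cancel, so there is nothing to combine back at the end), but the computation underneath is exactly the paper's.
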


\begin{proof}
    For each $j,j'\ge 2,$ since the partition $j,j',1^{|V(G)|-(j+j')}$ has size $|V(G)|$ and length $|V(G)|-(j+j')+2,$  applying $\omega$ multiplies $p_{j,j',1^{|V(G)|-(j+j')}}$ by $$(-1)^{|j,j',1^{|V(G)|-(j+j')}|-\ell(j,j',1^{|V(G)|-(j+j')})}=(-1)^{|V(G)|-(|V(G)|-(j+j')+2)}=(-1)^{j+j'-2}=(-1)^{j+j'}.$$ Thus, applying $\omega$ just removes the $(-1)^{j+j'}$ from the $p_{j,j',1^{|V(G)|-(j+j')}}$ coefficient. Then we can factor $[p_{j,j',1^{|V(G)|}}]\omega(X_G^{K_{2,n}})$ just like in the $H=S_{n+1}$ case as $$[p_{j,j',1^{|V(G)|}}]\omega(X_G^{K_{2,n}}) = 2\cdot n!\binom{j+j'}{j}\sum_{\substack{j_1+\dots+j_\ell=j+j',\\ k_2^i \le j_i\le k_1^i+k_2^i}} \prod_{i=1}^\ell \left((-1)^{k_1^i}\binom{k_2^i}{j_i-k_1^i}+(-1)^{k_2^i}\binom{k_1^i}{j_i-k_2^i}\right).$$ No matter what $j_1,\dots,j_\ell$ are, the $(-1)^{k_2^i}$ term always dominates, so the overall sign is $(-1)^{k_2^1+\dots+k_2^\ell}$ regardless of what $j$ and $j'$ are.
\end{proof}

\subsection{Some properties of the power sum expansion for \texorpdfstring{$X_G^H$}{XGH} with \texorpdfstring{$H=K_{m,n}$}{H=K(m,n)}}\label{subsec:p_K_m,n}

Now we can extend some of the above ideas to cases where $H=K_{m,n}$ is a general complete bipartite graph. Again, we assume $G$ is bipartite, we let $G=G^1\sqcup \dots \sqcup G^\ell$ be the partition of $G$ into connected components, and we let $k_1^i\ge k_2^i$ be the sizes of the two parts $G_1^i$ and $G_2^i$ in the bipartition of $G^i$.

\begin{prop}\label{prop:K_m,n_p-expansion}
    If $H=K_{m,n}$ with $n\ge k_1^1+\dots+k_\ell^1$ and $n\ge m$, then:
    \begin{enumerate}[label=(\arabic*)]
        \item For each $p_\lam$ term in $X_G^{K_{m,n}}$, at most $m$ parts of $\lam$ are greater than 1.
        \item For each $p_\lam$ term in $X_G^{K_{m,n}}$, the sum of the largest $m$ parts of $\lam$ must be at least $k_2^1+\dots+k_\ell^1$.
        \item All $p_\lam$ terms with exactly $m$ parts greater than 1 have the same sign $(-1)^{m+k_2^1+\dots+k_2^\ell}$ in $\omega(X_G^{K_{m,n}}).$
    \end{enumerate}
\end{prop}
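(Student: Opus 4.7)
The plan is to generalize the inclusion-exclusion argument used in Proposition~\ref{prop:K_2,n_p-expansion}. Each homomorphism $f : G \to K_{m,n}$ is determined by a choice $(b_1, \dots, b_\ell) \in \{1,2\}^\ell$ specifying which part $G_{b_i}^i$ of each connected component maps into the small side $V_m$ of $K_{m,n}$, together with independent assignments of each vertex to a specific vertex on the appropriate side. The pre-symmetric generating polynomial in the label-variables is
\[
\tilde P(V,W) \;=\; \sum_{b \in \{1,2\}^\ell} V^{K_b}\, W^{|V(G)| - K_b}, \qquad K_b = \sum_i k_{b_i}^i,
\]
where $V$ and $W$ denote the sums of label-variables on $V_m$ and $V_n$ respectively. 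The hypothesis $n \ge k_1^1 + \dots + k_1^\ell$ guarantees that every partition $\lam$ arising in the $p$-expansion satisfies $\ell(\lam) \le m+n$, so the $(m+n)$-variable $p$-expansion agrees with the infinite-variable one.

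Following the strategy of Proposition~\ref{prop:K_2,n_p-expansion}, I will derive the $p$-expansion of $X_G^{K_{m,n}}$ via inclusion-exclusion on the events ``vertex $v_s \in V_m$ receives a preimage of size exactly $\lam_s$''. For a partition $\lam$ with $r$ parts $\lam_1 \ge \dots \ge \lam_r > 1$ and the remaining parts equal to $1$, a $p_\lam$-contribution arises from selecting $r$ distinct vertices of $V_m$ to host the ``large'' preimages of sizes $\lam_1, \dots, \lam_r$, distributing the required vertices of $G$ among them, and letting the remaining vertices map freely through the $p_1$-factor. Since the $r$ host vertices must be distinct elements of $V_m$, we immediately get $r \le m$, proving claim~(1). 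For claim~(2), any valid homomorphism sends at least $\min_b K_b = k_2^1 + \dots + k_2^\ell$ vertices of $G$ to $V_m$; these are accounted for by the sum $\lam_1 + \dots + \lam_r$ from the large parts together with one for each of the $m-r$ unused vertices of $V_m$ (tracked by ones in the $p_1$-factor), so the sum of the $m$ largest parts of $\lam$ is at least $k_2^1 + \dots + k_2^\ell$.

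For claim~(3), when $r=m$ the coefficient $[p_\lam] X_G^{K_{m,n}}$ factors over the connected components exactly as in Corollary~\ref{cor:p_monotonicity_K(2,n)}: each factor is a sum over $b_i \in \{1,2\}$ whose two terms carry signs $(-1)^{k_1^i}$ and $(-1)^{k_2^i}$, and the $b_i=2$ term dominates in magnitude by $k_1^i \ge k_2^i$ and monotonicity of binomial coefficients. Combined with the inclusion-exclusion sign $(-1)^{(\lam_1 + \dots + \lam_m) - K_b}$ evaluated at the dominant $b=(2,\dots,2)$ with $K_b = k_2^1 + \dots + k_2^\ell$, this yields an overall sign of $(-1)^{(\lam_1 + \dots + \lam_m) + (k_2^1 + \dots + k_2^\ell)}$ for $[p_\lam] X_G^{K_{m,n}}$. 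Applying $\omega$ multiplies by $(-1)^{|\lam| - \ell(\lam)} = (-1)^{(\lam_1 + \dots + \lam_m) - m}$ (using $|\lam| = |V(G)|$ and $\ell(\lam) = m + |V(G)| - (\lam_1 + \dots + \lam_m)$), and the $\lam$-dependent exponents cancel, leaving the uniform sign $(-1)^{m + k_2^1 + \dots + k_2^\ell}$ for every $p_\lam$ with exactly $m$ large parts.

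The main obstacle will be rigorously justifying that the inclusion-exclusion produces no $p$-terms with more than $m$ large parts: a priori, preimages of size $\ge 2$ on the large side $V_n$ could appear to contribute additional large parts, but as in the $K_{2,n}$ case these contributions must cancel out, which requires careful bookkeeping. The remaining work (combinatorial factors, the dominant-binomial argument per component, and the parity tracking through $\omega$) is a direct generalization of the $K_{2,n}$ analysis.
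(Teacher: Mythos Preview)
Your overall strategy---generalize the $K_{2,n}$ inclusion-exclusion---is the same as the paper's, and your treatment of claim~(3) is essentially right once (1) is in hand. But your arguments for (1) and (2) have a genuine gap, and the way you describe the ``main obstacle'' at the end suggests you have not yet seen the mechanism that resolves it.

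You write that ``the $r$ host vertices must be distinct elements of $V_m$, [so] we immediately get $r\le m$.'' This is not immediate: the $p$-expansion is an algebraic identity, not a direct count of homomorphisms, and a factor $p_j$ in a term of the expansion does not come labeled as ``a vertex of $V_m$.'' Likewise your argument for (2), which reasons about how many vertices an actual homomorphism sends to $V_m$ and then tries to read this off from the parts of $\lam$, conflates the combinatorics of homomorphisms with the shape of the $p$-terms. In particular, ``one for each of the $m-r$ unused vertices of $V_m$ (tracked by ones in the $p_1$-factor)'' does not make sense: a $p_1$ factor is indistinguishable whether it corresponds to a vertex landing in $V_m$ or in $V_n$.

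The paper's resolution is structural rather than a cancellation argument. One sets up the inclusion-exclusion so that the \emph{initial} $p$-terms are $p_{j_1',\dots,j_m',1^{|V(G)|-(j_1'+\cdots+j_m')}}$, with exactly one factor $p_{j_s'}$ (possibly $j_s'=0$ or $1$) attached to each vertex of $V_m$, and all remaining vertices of $G$ carried by $p_1$ factors (allowed to land anywhere). There are then two layers of correction: (a) enforcing that the $m$ labels are distinct, which \emph{merges} some of the first $m$ parts; and (b) excluding the events that extra vertices from $G_{3-b_i}^i$ land in $V_m$, which \emph{increases} some of the first $m$ parts (and may again require merging). The point is that every correction term is obtained from an initial term by merging or enlarging among the first $m$ parts, never by introducing a new large part. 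Claims (1) and (2) then follow because these operations can only decrease the number of large parts and can only increase the sum $j_1'+\cdots+j_m'$, which starts at $k_{b_1}^1+\cdots+k_{b_\ell}^\ell\ge k_2^1+\cdots+k_2^\ell$. And claim (3) follows because a $\lam$ with exactly $m$ large parts can \emph{only} arise from the unmerged initial terms, so its coefficient is given by the clean product formula you wrote down. Your phrase ``these contributions must cancel out'' is the wrong picture: they never appear.
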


\begin{proof}
    Let $H_m\sqcup H_n$ be the bipartition of $H=K_{m,n}$, where $H_m$ is the size $m$ part and $H_n$ the size $n$ part. The idea is that we can compute the $p$-expansion for $X_G^{K_{m,n}}$ using an inclusion-exclusion argument similar to the ones in Propositions \ref{prop:p_monotonicity_stars} and \ref{prop:K_2,n_p-expansion}. Like before, we can start by computing the $p$-expansion when we restrict the $H$-CSF to $|V(H)|=m+n$ variables, and then as long as all $p$-terms showing up have length at most $m+n$, our resulting $p$-expansion will be the same as the actual $p$-expansion for $X_G^H$ when we extend to an infinite variable set.
    
    To apply inclusion-exclusion, we can start with the maps where for each connected component $G^i$, we force one of $G_1^i$ or $G_2^i$ (which we can call $G_{b_i}^i$) to map entirely onto $H_m$, and then we map the remaining vertices arbitrarily. We can split these colorings into cases based on how many vertices of $G$ map to each of the $m$ vertices of $H_m$. Each such case can be represented by a power sum term $$p_{j_1',\dots,j_m',1^{|V(G)|-(j_1'+\dots+j_m')}},$$ where the first $m$ factors $p_{j_1'},\dots,p_{j_m'}$ each represent mapping $j_i'$ vertices of $G$ onto a specific vertex of $H_m$, and the remaining factors represent mapping the remaining vertices of $G$ arbitrarily. Then since the $j_1'+\dots+j_m'$ vertices mapping to $H_m$ are precisely the vertices in $G_{b_1}^1\sqcup \dots \sqcup G_{b_\ell}^\ell$, we have $$j_1'+\dots+j_m'=k_{b_1}^1+\dots+k_{b_\ell}^\ell=|V(G)|-(k_{3-b_1}^1+\dots+k_{3-b_\ell}^\ell)\ge |V(G)|-n,$$ hence $$\ell(j_1',\dots,j_m',1^{|V(G)|-(j_1'+\dots+j_m')})\le m+n=|V(H)|.$$ Thus, all our power sum terms so far correspond to partitions of length $m+n,$ as needed. We should actually also include terms here where $j_1'+\dots+j_m'=k_{b_1}^1+\dots+k_{b_\ell}^\ell$ but some of the $j_i'$ values are 0, because we do not require all vertices in $H_m$ to actually get used. In that case, we have fewer parts in our partition, so fewer than $m+n$ total parts, and we also certainly have fewer than $m$ parts that are greater than 1. Thus, (1) and (2) hold for all our terms so far.

    The first issue is that these current power sum terms do not enforce that the variables chosen from the first $m$ factors are all different from each other. To fix this, we can use inclusion-exclusion to alternately add or subtract cases where certain subsets of the first $m$ parts of the partition get merged with each other. Each added or subtracted case corresponds to a power sum with some of the first $m$ parts merged, such as $$p_{j_1'+j_2',j_3'+j_4',\dots,j_m',1^{|V(G)|-(j_1'+\dots+j_m')}}$$ if the first two parts get merged and the next two parts also get merged, or eventually $$p_{j_1'+\dots+j_m',1^{|V(G)|-(j_1'+\dots+j_m')}}$$ if all $m$ parts get merged. Any such term has strictly fewer than $m+n$ total parts and strictly fewer than $m$ parts that are greater than 1. If we choose the correct coefficients for these added and subtracted $p$-terms, what remains will be the generating series for all ways to choose a tuple $(b_1,\dots,b_\ell)$ together with a map from $G$ to $H$ and a labeling of $V(H)$ such that all vertices of $G_{b_i}^i$ map onto $H_m$ for every $i$, while the remaining vertices of $G$ may be mapped arbitrarily. Since all we are doing to our partitions here is merging some of the $m$ largest parts together, (1) and (2) will still hold for all $p$-terms introduced so far.

    The second issue is that we are not enforcing that vertices from the $G_{3-b_i}^i$'s all map to $H_n$ instead of to $H_m,$ so we can again use inclusion-exclusion to subtract the unwanted terms. That is, we will alternately add and subtract terms where the sum of the first $m$ parts $j_1'+\dots+j_m'$ is greater than our desired value $k_{b_1}^1+\dots+k_{b_\ell}^\ell$, with the sign being given by the excess $(-1)^{(j_1'+\dots+j_m')-(k_{b_1}^1+\dots+k_{b_\ell}^\ell)},$ since that excess represents the number of extra vertices from the $G_{3-b_i}^i$'s that map onto $H_m$. In these new terms, the sum of the parts greater than 1 is larger than in our initial, so the total number of parts is less than for our initial terms, and hence is still always at most $m+n$. We will also again need to introduce additional terms to ensure that we are not double-counting cases where some of the $m$ parts corresponding to the vertices is $H_m$ get merged. However, again, all the additional terms introduced will just be formed by either increasing the sizes of some of the $m$ largest parts, or merging some of the $m$ largest parts together. Thus, all partitions introduced will be valid since they will still have length less than $m+n$, and also (1) and (2) will hold for all of them.

    It remains to check (3). The key here is that a partition $j_1',\dots,j_m',1^{|V(G)|-(j_1'+\dots+j_m')}$ with exactly $m$ parts greater than 1 can only represent a case where at least $j_1'+\dots+j_m'$ vertices of $G$ get assigned to the $m$ vertices of $H_m$, and $j_i'$ of them get assigned to the $i^{\text{th}}$ vertex of $H_m$ for some ordering on the vertices in $H_m$. Such a term cannot get introduced in any of the additional added or subtracted terms corresponding to merging some of the parts together by actually assigning multiple groups of vertices to the same vertex of $H_m$ instead of to different vertices of $H_m$. Thus, in the $(b_1,\dots,b_\ell)$ case, this term must have sign $$(-1)^{(j_1'+\dots+j_m')-(k_{b_1}^1+\dots+k_{b_\ell}^\ell)},$$ since that sign represents the number of extra vertices that get mapped to $H_m$ in addition to the desired ones. We can also compute the coefficient of such a term explicitly in the same way as in Lemma \ref{lem:star_p_expansion} and Proposition \ref{prop:K_2,n_p-expansion}. We can sum over tuples $(j_1,\dots,j_\ell)$ with $j_1+\dots+j_\ell=j_1'+\dots+j_m'$ such that $j_i$ represents the number of vertices mapped onto $H_m$ that come from $G^i$. For such a case, there are $\binom{k_{3-b_i}^i}{j_i-k_i}$ ways to choose which extra vertices in $G^i$ get mapped to $H_m$. We also need to multiply by $m!\cdot n!$ to account for the ways to order the vertices within each part of $H$, and by $\binom{j_1'+\dots+j_m'}{j_1',\dots,j_m'}$ to choose which vertices of $G$ map to which vertex of $H_m$. This gives a $[p_{j_1',\dots,j_m',1^{|V(G)|-(j_1'+\dots+j_m')}}]X_G^H$ coefficient of $$m!\cdot n!\cdot\binom{j_1'+\dots+j_m'}{j_1',\dots,j_m'}\sum_{(b_1,\dots,b_\ell)\in\{1,2\}^\ell} (-1)^{(j_1'+\dots+j_m')-(k_{b_1}^1+\dots+k_{b_\ell}^\ell)} \sum_{j_1+\dots+j_\ell=j_1'+\dots+j_m'} \ \prod_{i=1}^\ell\binom{k_{3-b_i}^i}{j_i-k_{b_i}^i}.$$ Our partition $j_1',\dots,j_m',1^{|V(G)|-(j_1'+\dots+j_m')}$ has size $|V(G)|$ and length $m+|V(G)|-(j_1'+\dots+j_m'),$ so the difference between its size and its length is $j_1'+\dots+j_m'-m.$ Thus, applying $\omega$ multiplies the power sum coefficient by $(-1)^{j_1'+\dots+j_m'-m}$. If we then reorganize the terms so they are first grouped by $(j_1,\dots,j_\ell)$ and then by $(b_1,\dots,b_\ell)$, we get that the coefficient of our $p$-term in $\omega(X_G^{K_{m,n}})$ is $$m!\cdot n!\cdot\binom{j_1'+\dots+j_m'}{j_1',\dots,j_m'}\sum_{j_1+\dots+j_\ell=j_1'+\dots+j_m'}(-1)^m\sum_{(b_1,\dots,b_\ell)\in\{1,2\}^\ell} (-1)^{k_{b_1}^1+\dots+k_{b_\ell}^\ell} \prod_{i=1}^\ell\binom{k_{3-b_i}^i}{j_i-k_{b_i}^i}.$$ We can factor the portion corresponding to each tuple $(j_1,\dots,j_\ell)$ to get $$m!\cdot n!\cdot\binom{j_1'+\dots+j_m'}{j_1',\dots,j_m'}\sum_{j_1+\dots+j_\ell = j_1'+\dots+j_m'}(-1)^m \prod_{i=1}^\ell \left((-1)^{k_{1}^i}\binom{k_2^i}{k_1^i+k_2^i-j_i}+(-1)^{k_2^i}\binom{k_1^i}{k_1^i+k_2^i-j_i}\right).$$ Since $k_1^i\ge k_2^i,$ the $(-1)^{k_2^i}$ term dominates for every $i$ regardless of what $j_i$ is, so the overall sign is always $$(-1)^{m+k_2^1+\dots+k_2^\ell},$$ proving (3).
\end{proof}

\section{Bases for \texorpdfstring{$\Lam$}{Lambda} using \texorpdfstring{$H$}{H}-CSFs}\label{sec:bases}

The authors of \cite{EFHKY22} show that it is not possible to construct a basis for the vector space $\Lam^n$ of degree $n$ symmetric functions using $H$-CSFs $X_G^{H_i}$ with $G$ fixed and $H$ varying. On the other hand, they point out that the chromatic bases of Cho and van Willigenburg \cite{cho2015chromatic} form bases of $\Lam^n$ using normal CSFs, and hence also using $H$-CSFs for $H$ a complete graph, since the normal CSF is just a scalar multiple of the $H$-CSF for $H=K_n$. However, they ask what other bases can be formed using $H$-CSFs, and in particular whether there are bases with $H$ fixed but not a complete graph. We investigate those questions in this section.

\subsection{A basis of self-CSFs of complete multipartite graphs}\label{subsec:self-CSF_basis}

Inspired by the $H$-CSF bases discussed in \cite{EFHKY22} and the $r_\lam$ basis studied in \cite{penaguiao2020kernel} and \cite{crew2021complete} of CSFs of complete multipartite graphs, we observe that the self-CSFs of complete multipartite graphs also form a basis for the ring $\Lam$ of symmetric functions:

\begin{prop}\label{prop:multipartite_basis}
    The self-CSFs $X_{K_\lam}^{K_\lam}$ form a basis for $\Lam,$ where $\lam$ ranges over all partitions and $K_\lam$ is the complete multipartite graph whose part sizes are the parts of $\lam$.
\end{prop}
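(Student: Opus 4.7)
The plan is to show that for each $n$, the set $\{X_{K_\lam}^{K_\lam} : \lam \vdash n\}$ is a basis of $\Lam^n$; since $\#\{\lam\vdash n\}=\dim\Lam^n$, it suffices to prove linear independence. I would do this by expanding each $X_{K_\lam}^{K_\lam}$ in the $m_\mu^n$ basis---recalling that $[m_\mu^n]X_{K_\lam}^{K_\lam}$ counts homomorphisms $K_\lam\to K_\lam$ of type $\mu$---and showing that the resulting change-of-basis matrix is triangular with nonzero diagonal when partitions are ordered by any total extension of dominance (say reverse lexicographic).

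The key structural observation is that for any homomorphism $f:K_\lam\to K_\lam$, the preimage $f^{-1}(v)$ of any target vertex $v$ lies in a single part of the source: if $u,w\in f^{-1}(v)$ then $f(u)=f(w)=v$, and since $K_\lam$ is loopless, $u$ and $w$ cannot be adjacent, which in $K_\lam$ forces them to lie in the same part. From this, if $f$ has type $\mu=(\mu_1\ge\cdots\ge\mu_{\ell(\mu)})$ and the preimages of the $k$ image vertices with largest preimage-size lie in source parts $A_{\pi(1)},\dots,A_{\pi(k)}$ (where $\pi$ need not be injective), then disjointness of preimages gives
$$\mu_1+\cdots+\mu_k=\sum_{j=1}^k|f^{-1}(w_j)|\le\sum_{i\in\pi(\{1,\dots,k\})}\lam_i\le\lam_1+\cdots+\lam_k,$$
using $|\pi(\{1,\dots,k\})|\le k$ and the fact that $\lam$ is ordered decreasingly. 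Hence $\mu\le\lam$ in dominance, and therefore $[m_\mu^n]X_{K_\lam}^{K_\lam}=0$ whenever $\mu\not\le\lam$.

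For the diagonal entry $[m_\lam^n]X_{K_\lam}^{K_\lam}$, I would exhibit an explicit homomorphism of type $\lam$: index the parts of the source and target by $\{1,\dots,\ell(\lam)\}$, pick any vertex $v_i$ in the target part $B_i$, and send all of the source part $A_i$ to $v_i$. The chosen images $v_1,\dots,v_{\ell(\lam)}$ lie in distinct target parts and are therefore pairwise adjacent, so this is a valid homomorphism; its preimages are the source parts $A_1,\dots,A_{\ell(\lam)}$, yielding type $\lam$. Thus $[m_\lam^n]X_{K_\lam}^{K_\lam}\ge1>0$.

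Combining the two bounds, the matrix $M_{\lam\mu}:=[m_\mu^n]X_{K_\lam}^{K_\lam}$ is lower triangular with nonzero diagonal with respect to the chosen total extension of dominance, hence invertible; this proves $\{X_{K_\lam}^{K_\lam}:\lam\vdash n\}$ is a basis of $\Lam^n$, and since the self-CSFs are graded, taking the union over $n$ gives a basis of $\Lam$. I don't anticipate a serious obstacle: all pieces are short, and the only delicate step is the dominance bound, which rests entirely on the single-part-preimage lemma.
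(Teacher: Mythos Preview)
Your proof is correct and rests on the same structural observation as the paper's: any homomorphism $K_\lam\to K_\lam$ has each fiber contained in a single part of the source (equivalently, vertices from distinct parts map to distinct vertices). The difference is only in how much you squeeze out of this. You push it to the dominance bound $\mu\trianglelefteq\lam$ and then triangularize with respect to a linear extension of dominance; the paper instead notes the weaker consequence $\ell(\mu)\ge\ell(\lam)$, observes that equality forces $\mu=\lam$ (since each part must collapse to a single image vertex), and triangularizes by ordering partitions by length alone. Your route proves a sharper support statement for $X_{K_\lam}^{K_\lam}$ at the cost of the short partial-sum inequality, while the paper's route is a line shorter but yields less information about which $m_\mu^n$ can appear. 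Either triangularity suffices for the basis claim.
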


\begin{proof}
    A homomorphism from $X_{K_\lam}^{K_\lam}$ to itself must send vertices from different parts of $K_\lam$ to different vertices, so the shortest length of any monomial in $X_{K_\lam}^{K_\lam}$ is $\ell(\lam)$, and the only monomial of that length that shows up is $m_\lam^n$, because the only way to map $K_\lam$ onto $\ell(\lam)$ vertices is to send all the vertices in each part to the same image vertex. Thus, if we order the set of $X_{K_\lam}^{K_\lam}$'s in some order such that $\ell(\lam)$ is nondecreasing, and we order the $m_\lam$'s in the same order, we get a triangular transition matrix with nonzero diagonal entries from the $m_\lam$ basis to the $X_{K_\lam}^{K_\lam}$'s. Such a triangular matrix must be invertible, so since the $m_\lam$'s form a basis for $\Lam$, the $X_{K_\lam}^{K_\lam}$'s do as well.
\end{proof}

\subsection{Bases \texorpdfstring{$X_{G_\lam}^H$}{XGH} for \texorpdfstring{$\Lam^n$}{Lambda n} with fixed \texorpdfstring{$H$}{H}}\label{subsec:clique_containing_basis}

One question asked in \cite{EFHKY22} is whether there exist non-complete graphs $H$ for which one can build a basis for $\Lam^n$ of self-CSFs $X_{G_\lam}^H$ with $H$ fixed but $G_\lam$ varying. We show that the answer to this question is yes. Here is one construction where $H$ has more than $n$ vertices:

\begin{prop}\label{prop:clique_containing_basis}
    If $H$ is any graph containing an $n$-vertex clique, the self-CSFs $X_{K_\lam}^H$ form a basis for $\Lam^n$, where $\lam$ ranges over all partitions of $n$.
\end{prop}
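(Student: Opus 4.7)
The plan is to mirror the triangularity argument from Proposition \ref{prop:multipartite_basis}. Since $\dim \Lam^n = p(n)$ equals the number of partitions of $n$, it suffices to show linear independence of $\{X_{K_\lam}^H : \lam \vdash n\}$. I will do this by analyzing the expansion $X_{K_\lam}^H = \sum_{\mu \vdash n} d_\mu(\lam)\, m_\mu^{|V(H)|}$ from \cite{EFHKY22}. Since every $\mu \vdash n$ satisfies $\ell(\mu) \le n \le |V(H)|$ (the second inequality uses the assumption that $H$ contains an $n$-clique), the $m_\mu^{|V(H)|}$ are nonzero scalar multiples of the $m_\mu$ and hence form a basis of $\Lam^n$.

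The core combinatorial claim I will establish is: for any $\lam \vdash n$ and any $\mu \vdash n$ with $\ell(\mu) \le \ell(\lam)$, the coefficient $d_\mu(\lam)$ is zero unless $\mu = \lam$, and $d_\lam(\lam) > 0$. To prove the first half, let $f: K_\lam \to H$ be any homomorphism. Any two vertices from different parts of $K_\lam$ are adjacent, so their $f$-images must be distinct and adjacent in $H$. In particular, for each image vertex $w$, the preimage $f^{-1}(w)$ must lie entirely inside a single part of $K_\lam$, since two vertices from different parts of $K_\lam$ are adjacent and cannot share an image (no loops in $H$). If the image of $f$ has size $\ell(\lam)$, then since each of the $\ell(\lam)$ parts must contribute at least one image vertex, each part contributes exactly one image vertex, giving $f$ type $\lam$. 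To prove $d_\lam(\lam) > 0$, note that $H$ contains an $\ell(\lam)$-clique $Q$ (since $\ell(\lam) \le n$ and $H$ contains an $n$-clique); choosing any bijection from the $\ell(\lam)$ parts of $K_\lam$ to the vertices of $Q$ and collapsing each part to its assigned vertex yields a valid homomorphism of type $\lam$, since any two vertices of $K_\lam$ in different parts are adjacent and their images in $Q$ are adjacent.

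Ordering the partitions of $n$ so that $\ell(\lam)$ is nondecreasing, the resulting transition matrix from $(X_{K_\lam}^H)_{\lam \vdash n}$ to $(m_\mu^{|V(H)|})_{\mu \vdash n}$ is block upper-triangular when grouped by the value of $\ell$, with each diagonal block being a diagonal matrix with nonzero entries. This matrix is therefore invertible, so the $X_{K_\lam}^H$ form a basis of $\Lam^n$.

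The only nontrivial step is the preimage-in-a-single-part claim, but this is forced by the fact that $K_\lam$ is complete between different parts and edgeless within each part; everything else is parallel to the proof of Proposition \ref{prop:multipartite_basis}. The clique hypothesis on $H$ enters only at the very end, to guarantee that the required $\ell(\lam)$-cliques exist so that the diagonal coefficients $d_\lam(\lam)$ are actually nonzero.
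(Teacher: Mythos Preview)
Your proof is correct and takes essentially the same approach as the paper's: both argue that the minimal-length monomial appearing in $X_{K_\lam}^H$ is exactly $m_\lam^{|V(H)|}$ (using that preimages under any homomorphism must lie in a single part of $K_\lam$), that the $n$-clique in $H$ guarantees this coefficient is nonzero, and then conclude by the same triangularity argument as in Proposition~\ref{prop:multipartite_basis}. Your write-up is in fact slightly more explicit than the paper's in justifying why the $m_\mu^{|V(H)|}$ are nonzero and in spelling out the preimage argument; the only cosmetic point is that your sentence ``If the image of $f$ has size $\ell(\lam)$\dots'' could be preceded by the observation (which you do state inside it) that the image always has size at least $\ell(\lam)$, so that the case $\ell(\mu)<\ell(\lam)$ is visibly handled.
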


\begin{proof}
    The argument is essentially the same as for Proposition \ref{prop:multipartite_basis}. Since $H$ contains an $n$-vertex clique, there is a map from $K_\lam$ to $H$ of type $\lam$ for every $\lam\vdash n$. (The requirement that $H$ contain an $n$-vertex clique is needed to ensure this works for $\lam = 1^n$.) However, for any $H$, $X_{K_\lam}^H$ can never contain a monomial of length less than $\ell(\lam)$, and the only monomial of length exactly $\ell(\lam)$ it can contain is $m_\lam^{|V(H)|}$, since only vertices from the same part of $K_\lam$ can map to the same vertex of $H$. Thus, each $X_{K_\lam}^H$ has a different unique monomial of minimal length, so the $X_{K_\lam}^H$'s form a basis for $\Lam^n$ by the same triangularity argument as in Proposition \ref{prop:multipartite_basis}.
\end{proof}

If $H$ has fewer than $n$ vertices, then it is not possible to form a basis for $\Lam^n$ using $H$-CSFs, because there is no way to get the basis vector $m_{1^n}^n.$ The remaining question is thus whether it is possible when $H$ has exactly $n$ vertices but $H\ne K_n.$ It turns out the answer here is also yes. In particular, computations in Sage \cite{sage} show that for $n=3$, it is possible when $H=K_1\sqcup K_2$ but not when $H=P_3$, and for $n=4$, it is possible when $H$ is 7 out of the 11 order 4 graphs: all of them except the edgeless graph, the claw, the 4-cycle, and the complete graph minus an edge. 

We can also give a construction that works for any $n\ge 3$ of a family bases for $\Lam^n$ using $X_{G_\lam}^H$'s with $H$ a fixed $n$-vertex graph that is not a complete graph:

\begin{prop}\label{prop:union_of_cliques_basis}
    Suppose $H$ is a disjoint union of some number of cliques at least one of which has size 3 or more, and at most $\lceil n/2\rceil $ of which have size 1. Then there exist $n$-vertex graphs $G_\lam$ such that the $X_{G_\lam}^H$'s form a basis for $\Lam^n$.
\end{prop}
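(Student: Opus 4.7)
The plan is to mimic the triangularity argument used in Propositions \ref{prop:multipartite_basis} and \ref{prop:clique_containing_basis}: for each $\lambda\vdash n$, construct an $n$-vertex graph $G_\lambda$ such that, under a suitable partial order on partitions (e.g.\ dominance or reverse-lex), $X_{G_\lambda}^H$ has a ``leading'' monomial of type $\lambda$ with nonzero coefficient, while no $X_{G_\mu}^H$ with $\mu\neq\lambda$ breaks this triangular structure. A crucial structural observation is that since $H=K_{a_1}\sqcup\cdots\sqcup K_{a_k}$ with $a_1\geq a_2\geq\cdots\geq a_k$ and $a_1\geq 3$, every homomorphism $f:G\to H$ sends each connected component of $G$ into a single clique of $H$, and its restriction to each such component is a proper coloring using the clique's vertices as colors; thus the type of $f$ is determined by how components of $G$ are distributed across cliques of $H$ together with the multiset of color-class sizes within each clique.

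For the construction I would proceed by cases on $\ell(\lambda)$. If $\ell(\lambda)\leq a_1$, take $G_\lambda=K_\lambda$ so that the argument from Proposition \ref{prop:clique_containing_basis} applies verbatim and yields $m_\lambda^{|V(H)|}$ as the unique minimum-length monomial in $X_{G_\lambda}^H$. If $\ell(\lambda)>a_1$, partition the parts of $\lambda$ into blocks $\lambda^{(1)},\lambda^{(2)},\dots$, each block having length equal to the size of some clique $K_{a_{i_j}}$ of $H$, and take $G_\lambda$ to be the disjoint union $K_{\lambda^{(1)}}\sqcup K_{\lambda^{(2)}}\sqcup\cdots$ of complete multipartite graphs built from these blocks. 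Forcing each block to ``fill'' its target clique (i.e.\ $\ell(\lambda^{(j)})=a_{i_j}$) prevents two distinct blocks from being sent into the same clique under a minimum-type homomorphism, so the minimum type recorded by $X_{G_\lambda}^H$ is exactly $\lambda$. The hypothesis that at most $\lceil n/2\rceil$ of the cliques of $H$ are singletons guarantees that enough non-singleton cliques are available to realize such a blocking for every $\lambda\vdash n$, while $a_1\geq 3$ is precisely what lets us handle partitions like $(1^n)$ and, more generally, $\lambda$ with large $\ell(\lambda)$ that would otherwise fail to embed.

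The main obstacle is verifying both the combinatorial existence of the required blocking and the nonvanishing and uniqueness of the leading coefficient. For the combinatorial part, one needs to check that for \emph{every} partition $\lambda\vdash n$, the parts can actually be grouped into blocks whose lengths match the available clique sizes of $H$ (with at most one block per clique used in the minimum homomorphism); the singleton bound is used here to ensure non-singleton cliques always suffice, but the argument seems to require careful case analysis on both the shape of $\lambda$ and the multiset of clique sizes in $H$. For the leading-coefficient part, one must show using Proposition \ref{prop:coeff_in_general} that the natural type-$\lambda$ homomorphisms survive in $[m_\lambda^{|V(H)|}]X_{G_\lambda}^H$ without cancellation, and that no $X_{G_\mu}^H$ with $\mu$ greater than $\lambda$ in the chosen order contributes an $m_\lambda^{|V(H)|}$ term --- which amounts to ruling out ``accidental'' merged homomorphisms between different pieces of $G_\mu$ that would collapse a longer partition $\mu$ into the shorter partition $\lambda$. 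This second point is the most delicate, since a disjoint-union $G_\mu$ naturally produces homomorphisms of many different types, so fixing a partial order under which the transition matrix is genuinely triangular is the step I expect to require the most care.
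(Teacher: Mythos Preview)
Your construction is essentially the paper's: build $G_\lambda$ as a disjoint union of complete multipartite graphs, one per clique of $H$, with part sizes drawn from $\lambda$. However, two of your specific choices create real gaps.

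First, your requirement that each block ``fill'' its target clique exactly (i.e.\ $\ell(\lambda^{(j)})=a_{i_j}$) is too restrictive: it forces $\ell(\lambda)$ to be expressible as a sum of clique sizes of $H$, which fails for many $\lambda$ (e.g.\ $H=K_3\sqcup K_3$, $\lambda=(2,1,1,1,1)$ with $\ell(\lambda)=5$). The paper instead greedily assigns the first $h_1$ parts of $\lambda$ to the largest clique, the next $h_2$ to the second, and so on, allowing the \emph{last} block to be strictly smaller than its clique. This is why the hypothesis $a_1\geq 3$ matters: when the last block has only one part of size $>1$ (hence disconnected), the paper moves one part from the first block to join it, and $a_1\geq 3$ guarantees the first block stays connected after losing a part. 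Your reading of the role of $a_1\geq 3$ (handling $\lambda=(1^n)$) is off.

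Second, neither dominance nor reverse-lex will give triangularity here, because $X_{G_\lambda}^H$ contains monomials both longer and shorter than $\lambda$: one can map all components of $G_\lambda$ to distinct cliques (giving monomials of length up to $h_1+\cdots+h_k$ where $k$ is the number of components of $G_\lambda$), or collapse two components into the same clique (giving monomials of length at most $h_1+\cdots+h_{k-1}$). The paper's ordering exploits exactly this dichotomy: it groups partitions into bands $(h_1+\cdots+h_{k-1},\,h_1+\cdots+h_k]$ by length, lists the bands in increasing $k$, and \emph{within each band} lists partitions in decreasing length; finally $\lambda=(n)$ is placed at the end with $G_n=\overline{K_n}$. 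Under this ordering the longer monomials of $X_{G_\lambda}^H$ land in the same band but earlier, while the shorter ones land in a strictly earlier band, yielding the required triangularity. You correctly flagged the ordering as the delicate step, but the actual ordering needed is this non-standard one rather than any of the usual partial orders.
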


\begin{proof}
    For $\ell(\lam)=1$, we define $G_\lam=G_n:=\ol{K_n}$ to be the edgeless graph on $n$ vertices. To construct $G_\lam$ for each $\lam$ with $\ell(\lam)\ge 2$, order the parts of $H$ in decreasing order by size, and call their sizes $h_1\ge h_2 \ge \dots \ge h_\ell$. Then we will let $G_\lam$ be a disjoint union of complete multipartite graphs such that the first one has $h_1$ parts whose sizes are the $h_1$ largest parts of $\lam$, the next has $h_2$ parts whose sizes are the next $h_2$ largest parts of $\lam$, and so on.  If we end up at a point where $h_1+\dots+h_{k-1}\le \ell(\lam)\le h_{k}$, we will simply let the last complete multipartite graph have $\ell(\lam)-(h_1+\dots+h_{k-1})$ parts instead of $h_k$ parts, whose sizes are the remaining parts of $\lam$. 
    
    The exception is that if $\ell(\lam)-(h_1+\dots+h_{k-1})=1$ and $\lam_{\ell(\lam)}>1$, we will want to modify the construction slightly so that this final complete multipartite graph is actually connected. To do that, we can remove one of the parts (say, the size $\lam_1$ part) from the first component of $G_\lam$, and instead add it to the last one, so the last one is now a complete bipartite graph with part sizes $\lam_1$ and $\lam_{\ell(\lam)}$. Then since we assumed $h_1\ge 3$, the first component of $G_\lam$ still has at least 2 parts, so it is still connected.

    The other thing to be careful of is that potentially multiple components of $G_\lam$ could end up being disconnected, if for some $k$ we have $h_1+\dots+h_{k-1}<\ell(\lam)$ and $\lam_{h_1+\dots+h_{k-1}+1}>1$ but $h_k=1$, since then the $k^{\text{th}}$ component of $G_\lam$ would consist of multiple isolated vertices and so would not be connected. However, we assumed $H$ has at most $\lceil n/2\rceil$ isolated vertices, meaning that if $h_k=1$, $h_1+\dots+h_{k-1}\ge \lfloor n/2\rfloor.$ If the $(h_1+\dots+h_{k-1}+1)^{\text{st}}$ part of $\lam$ is greater than 1, all parts before that must also be greater than 1, meaning their sum is more than $2\cdot(h_1+\dots+h_{k-1}+1)\ge 2\cdot(\lfloor n/2\rfloor +1)>n.$ This is a contradiction, since $\lam$ is supposed to be a partition of $n$. Thus, this issue cannot actually happen, so all components of $G_\lam$ will actually be connected, although some of them may be singleton vertices.

    To show that the $X_{G_\lam}^H$'s form a basis, first note that $\lam=n$ is the only partition for which $X_{G_\lam}^H$ contains an $m_n^n$ term, because that means mapping all vertices onto the same vertex of $H$, which can only happen if $G_\lam$ is edgeless. 

    We will then order the remaining partitions as follows. We will first list all the partitions with length at most $h_1$ in some order that is \emph{decreasing by length}, so we first list the length $h_1$ partitions, then the length $h_1-1$ partitions, and so on down to the length 2 partitions. Next we list the partitions with length between $h_1+1$ and $h_1+h_2$, again in an order that is decreasing by length, followed by the ones with length between $h_1+h_2+1$ and $h_1+h_2+h_3$ in an order decreasing by length, and so on.

    We claim that each $X_{G_\lam}^H$ in our list includes an $m_\lam^n$ term, and aside from that only includes $m_\mu^n$ terms with $\mu$ showing up \emph{earlier} in the list than $\lam$. To see this, suppose $\ell(\lam)=h_1+\dots+h_{k-1}+r$ where $1\le r \le h_{k}$. We can get an $m_\lam^n$ term by mapping the component of $G_\lam$ with $h_1$ parts onto the $K_{h_1}$ in $H$, then the component with $h_2$ parts onto the $K_{h_2}$, and so on until we map the component with $r$ parts onto $r$ vertices of the $K_{h_k}$ in $H$. For each part of some size $\lam_i$ in of one of these $K_{h_j}$'s, all $\lam_i$ vertices of that part will map onto the same vertex of $H$, while vertices from different parts will map onto different vertices of $H$. This gives a map from $G_\lam$ to $H$ of type $\lam$, so $[m_\lam^n]X_{G_\lam}^H\ne 0$. If we are in an exceptional case where $G_\lam$ instead has a component with $h_1-1$ parts and the last component has 2 parts, we essentially do the same thing except we map the first component onto $h_1-1$ of the vertices in the $K_{h_1}$ in $H$, and the last component onto 2 of the vertices in the $K_{h_k}$ in $H$.

    Now we need to show that any longer monomial $m_\mu^n$ showing up in $X_{G_\lam}^H$ has $\ell(\mu)\le h_1+h_2+\dots+h_k,$ and any shorter monomial has $\ell(\mu)\le h_1+h_2+\dots+h_{k-1}.$ For the longer monomials, note that to maximize the length of a monomial in $X_{G_\lam}^H$, the best we can do is map each component of $G_\lam$ onto a distinct clique in $H$. Since $G_\lam$ has $k$ connected components, we can use at most $k$ of the cliques in $H$, so to maximize the number of vertices we can use, we should choose the $k$ largest cliques, in which case we have at most $h_1+h_2+\dots+h_k$ available vertices. Thus, the longest monomial in $X_{G_\lam}^H$ has length at most $h_1+h_2+\dots+h_k$. 
    
    For the next longest monomial after $m_\lam^n$, note that to get a monomial shorter than $\lam$, we must map two components of $G_\lam$ onto the same component of $H$, because otherwise we would need a distinct vertex of $H$ for each part of each of the multipartite graphs making up $G$, since vertices from different parts of the same component of $G$ must map to distinct vertices of $H$. Then we can at best use the vertices within the $k-1$ largest components $K_{h_1},K_{h_2},\dots,K_{h_{k-1}}$, so at most $h_1+h_2+\dots+h_{k-1}$ vertices of $H$ can be used.

    Then if we add the partition $n$ of length 1 at the very \emph{end} of our ordering, it will still be the case that every $X_{G_\lam}^H$ only contains monomials corresponding to partitions earlier in the ordering, since no $X_{G_\lam}^H$ except for $X_{G_n}^H$ contains an $m_n^n$. It will also still be the case that every $X_{G_\lam}^H$ \emph{does} contain an $m_\lam^n$, since $X_{G_n}^H$ does contain an $m_n^n$ term. So, we have a triangular transition matrix from the $m_\lam^n$'s to the $X_{G_\lam}^H$'s, and all the diagonal entries are nonzero. Thus, we can invert this matrix to write the $m_\lam^n$'s in terms of the $X_{G_\lam}^H$'s, so since the $m_\lam^n$'s form a basis for $\Lam^n$, the $X_{G_\lam}^H$'s do as well.
\end{proof}

On the other hand, there are certain families of $n$-vertex graphs $H$ for which we can show that the $X_G^H$'s do not span $\Lam^n$ as $G$ ranges over all $n$-vertex graphs. The simplest case is when $H$ is edgeless, since then $X_G^H=0$ unless $G$ is also the edgeless graph. A somewhat more interesting example is that when $H=S_n$ is a star graph, Lemma \ref{lem:star_p_expansion} implies that for all $n$-vertex graphs $G$, the $p$-expansion of $X_G^{S_n}$ can only include $p_\lam$ terms where $\lam$ has at most one part larger than 1, so for $n\ge 3$ they will not span $\Lam^n$. There are some other cases we can similarly rule out using power sums. For instance:

\begin{prop}\label{prop:matching_non_basis}
    Let $H$ be the disjoint union of $k$ edges and $n-2k$ isolated vertices, where $k\le \lfloor n/4\rfloor -1$. Then the $X_G^H$'s do not span $\Lam^n$.
\end{prop}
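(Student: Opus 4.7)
The plan is to exhibit a proper subspace of $\Lam^n$ containing every $X_G^H$. Specifically, I will show that for every $n$-vertex graph $G$, the $p$-expansion of $X_G^H$ only involves partitions $\mu$ of $n$ with at most $2k$ parts of size $\ge 2$. Since $k\le\lfloor n/4\rfloor-1$ gives $n\ge 4k+4$, the partition $\mu_0:=2^{2k+1}\,1^{\,n-4k-2}$ is a valid partition of $n$ with $2k+1$ parts of size $\ge 2$, so $p_{\mu_0}$ lies outside this span and the $X_G^H$'s cannot span $\Lam^n$.

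First, following Proposition~\ref{prop:coeff_in_general}, I would start from
\[
X_G^H \;=\; \sum_{\pi}E(G/\pi,H)(n-|\pi|)!\,m_{\tn{type}(\pi)},
\]
where $\pi$ ranges over set partitions of $V(G)$ with $G/\pi$ loopless. For $H=kK_2\sqcup(n-2k)K_1$, the condition $E(G/\pi,H)\ne 0$ forces $G/\pi$ to consist of $k'\le k$ copies of $K_2$ together with isolated super-vertices; a direct enumeration of embeddings yields $E(G/\pi,H)(n-|\pi|)! = \frac{k!}{(k-k')!}\,2^{k'}\,(n-2k')!$, depending only on $k'=k'(\pi)$. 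Grouping by $k'$ reduces the problem to analyzing, for each $k'\le k$, the sum $A_{k'}(G):=\sum_{\pi\,:\,k'(\pi)=k'} m_{\tn{type}(\pi)}$.

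Next I would describe a valid $\pi$ with $k'(\pi)=k'$ as the combined data of (i) a partition of the non-singleton (necessarily bipartite) connected components of $G$ into $k'$ nonempty groups with an orientation for each component, producing bipartition-size pairs $(a_1,b_1),\ldots,(a_{k'},b_{k'})$ for the $2k'$ ``edge blocks'' of $\pi$; and (ii) a distribution of the singleton vertices of $G$ among those $2k'$ edge blocks together with a set partition of any leftovers into new isolated blocks. For fixed data (i), summing over all choices in (ii) collapses by the slot-merging identity
\[
m_{(a_1,b_1,\ldots,a_{k'},b_{k'})}\cdot p_1^{\,s}
\;=\; \sum_{\substack{s_j^{\pm}\ge0,\,\pi_0'\\ \sum_j(s_j^++s_j^-)\le s}}\binom{s}{s_1^+,s_1^-,\ldots,s_{k'}^+,s_{k'}^-,\mathrm{rest}}\,m_{(a_1+s_1^+,b_1+s_1^-,\ldots,a_{k'}+s_{k'}^+,b_{k'}+s_{k'}^-)\cup\tn{type}(\pi_0')},
\]
which I would verify by expanding both sides as sums over ordered distinct index tuples and grouping them by the overlap pattern of the $s$ ``$p_1$-slots'' with the $2k'$ mutually distinct ``$m$-slots''. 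Hence each $A_{k'}(G)$ is a combinatorial linear combination of terms of the form $m_{(a_1,b_1,\ldots,a_{k'},b_{k'})}\cdot p_1^{\,s}$.

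Finally, Möbius inversion on the partition lattice expresses $m_{(a_1,b_1,\ldots,a_{k'},b_{k'})}$ as a signed sum of $p_{\lambda(\mu,\sigma)}$'s, where $\sigma$ ranges over set partitions of the $2k'$ positions of $\mu$; each such $\lambda(\mu,\sigma)$ has $|\sigma|\le 2k'$ parts, so at most $2k'$ parts of size $\ge 2$. Multiplying by $p_1^{\,s}$ appends only $1$-parts, so every $p$-term appearing in $A_{k'}(G)$, and therefore in $X_G^H$, has at most $2k'\le 2k$ parts of size $\ge 2$, proving the claim. The hardest step is the slot-merging identity above: the careful accounting requires tracking both the ``ordered distinct'' convention of $m_\mu$ (which enforces that the $2k'$ $m$-slots land in distinct blocks) and the multinomial choices for how each of the $s$ singleton $p_1$-slots either joins an existing edge block or starts a new isolated block, so it is cleanest to phrase the identity in terms of weighted set partitions on $2k'+s$ positions subject to this distinctness constraint on the first $2k'$.
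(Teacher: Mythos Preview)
Your approach is correct and reaches the same key claim as the paper—that every $p_\mu$ appearing in $X_G^H$ has at most $2k$ parts greater than $1$—but by a genuinely different route. The paper argues this by sketching an inclusion-exclusion computation of the $p$-expansion directly, in the style of \S\ref{sec:p-expansions}: one starts from power-sum terms indexed by the (at most $2k$) edge-endpoints of $H$, and all correction terms arise only by merging some of those parts, so the count of parts $>1$ can never grow. You instead work on the $m$-side: you classify the contributing set partitions $\pi$ by the number $k'$ of edges in $G/\pi$, observe that the embedding count depends only on $k'$, collapse the sum over ``isolated-vertex data'' via the slot-merging identity to obtain terms of the form $\tilde m_\nu\cdot p_1^{\,s}$ with $\ell(\nu)=2k'$, and then invoke the triangularity of the $m$-to-$p$ change of basis. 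Your argument is more explicit and self-contained (it does not lean on the machinery of \S\ref{sec:p-expansions}), at the cost of needing the slot-merging identity; the paper's argument is shorter but relies on the reader having internalized the earlier inclusion-exclusion template.

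Two small corrections you should make. First, the opening formula should read $X_G^H=\sum_\pi E(G/\pi,H)\,(n-|\pi|)!\,\tilde m_{\text{type}(\pi)}$ with the \emph{augmented} monomial $\tilde m_\lambda=\prod_i r_i(\lambda)!\cdot m_\lambda$, since $m_\lambda^{|V(H)|}=(|V(H)|-\ell(\lambda))!\,\tilde m_\lambda$; your slot-merging identity is likewise only correct with $\tilde m$ in place of $m$ (e.g.\ $m_{1}\cdot p_1=m_2+2m_{11}$, not $m_2+m_{11}$). This is harmless for your conclusion because $m_\lambda$ and $\tilde m_\lambda$ differ by a nonzero scalar. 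Second, your parametrization of ``data (i)'' overcounts each $\pi$ by a factor of $2^{k'}k'!$ (from relabeling the $k'$ groups and swapping the two sides within each group); again this does not affect the qualitative conclusion that $A_{k'}(G)$ lies in the span of $\tilde m_\nu\cdot p_1^{\,s}$ with $\ell(\nu)=2k'$, but it is worth noting if you want the identity to be exact.
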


\begin{proof}
    If $G$ is not bipartite, then $X_G^H=0$. For $G$ bipartite, we can imagine computing the $p$-expansion for $X_G^H$ using an inclusion-exclusion approach similar to our approach to the $p$-expansions in \S\ref{sec:p-expansions}. We need each connected component of $G$ of size 2 or more to independently map onto one of the edges of $H$. We can use casework based on which components of size 2 or more in $G$ map onto the same edge of $H$ as each other. For each such case, we get an initial power sum term with parts corresponding to how many of the non-singleton vertices of $G$ are supposed to map onto each endpoint of an edge of $H$, and then the rest of the parts will be $p_1$'s, corresponding to mapping any remaining isolated vertices of $G$ arbitrarily. 
    
    The issue is that among the factors in our $p_\lam$ term that are supposed to correspond to endpoints of edges in $H$, we need to ensure that the same variable does not get chosen multiple times, so that all the endpoints of edges in $H$ get distinct labels. This means we need to alternately subtract and add various power sum terms formed by merging some of the parts in the partitions for our starting power sum terms. However, these are the \emph{only} sorts of additional power sum terms that will arise in our inclusion-exclusion process, so in particular, for any power sum term showing up in our $p$-expansion, the number of parts of size greater than 1 will never exceed the number of parts of size greater than 1 in our starting terms. 
    
    Thus, for any $p$-term in our expansion, the number of parts of size greater than 1 is always at most twice the number of edges in $H$, or at most $2\lfloor n/4\rfloor -2<n/2-2.$ So, for every $n$-vertex graph $G$, $X_G^H$ will only include power sum terms with fewer than $n/2-2$ parts of size 2 or more. This means it is impossible to take a linear combination of these $H$-CSFs to get $p_{2^{n/2}}$ for $n$ even, or to get $p_{2^{(n-1)/2},1}$ for $n$ odd, so the $H$-CSFs do not span all of $\Lam^n$.
\end{proof}

Since it is possible to form a basis for $\Lam^n$ with $X_G^H$'s when $H=K_n$ is the complete graph on $n$-vertices, it is natural to ask whether it is possible to form a basis for $\Lam^n$ with $X_G^H$'s when $H$ is the graph formed by removing one edge from $K_n$, since that graph seems to be closest to the complete graph. However, somewhat surprisingly, this turns out not to be possible:

\begin{prop}\label{prop:complete_minus_edge_non_basis}
    If $H$ is the complete graph $K_n$ with one edge deleted, the $X_G^H$'s do not span $\Lam^n$, and in fact they span a subspace of $\Lam^n$ of codimension 1.
\end{prop}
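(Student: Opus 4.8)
\section*{Proof proposal}

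The plan is to sandwich the dimension of $V:=\mathrm{span}\{X_G^H : |V(G)|=n\}$ between $p(n)-1$ and $p(n)-1$, where $H=K_n\setminus e$ and $p(n)=\dim\Lam^n$. For the lower bound $\dim V\ge p(n)-1$ I would run exactly the triangularity argument of Proposition \ref{prop:clique_containing_basis}: since $H$ contains a clique of size $n-1$ but none of size $n$, for every $\lam\vdash n$ with $\ell(\lam)\le n-1$ there is a homomorphism $K_\lam\to H$ of type $\lam$, while $m_\lam^n$ is the unique shortest monomial occurring in $X_{K_\lam}^H$ (vertices in different parts of $K_\lam$ need distinct images). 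Ordering partitions by length then shows that the $p(n)-1$ functions $\{X_{K_\lam}^H:\ell(\lam)\le n-1\}$, that is all $\lam\neq 1^n$, are linearly independent.

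For the upper bound $\dim V\le p(n)-1$ the key is a substitution identity realizing $X_G^H$ as the image of Stanley's CSF $X_G$ under one fixed linear operator. Folding the non-edge $e=\{a,b\}$ to a point gives a bijection between $\tn{Hom}(G,K_n\setminus e)$ and pairs consisting of a homomorphism $h\colon G\to K_{n-1}$ together with an \emph{arbitrary} $2$-coloring of the fibre $h^{-1}(m)$ over the folded vertex $m$ (legal because this fibre is independent). Translating this through the labelling sum that defines the $H$-CSF, I would prove
\[
X_G^{K_n\setminus e} \;=\; \mathcal M(X_G), \qquad
\mathcal M(F) := 2(n-2)!\sum_{1\le p<q\le n} F\big(x_p+x_q,\,(x_r)_{r\ne p,q}\big),
\]
where $\mathcal M\colon\Lam^n\to\Lam^n$ is the linear ``merge'' operator summing, over all pairs of variables, the evaluation of $F$ on the $(n-1)$-letter alphabet obtained by replacing $x_p,x_q$ with the single letter $x_p+x_q$. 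As with the definition of $X_G^H$ itself, one checks $\mathcal M(F)$ is a well-defined degree $n$ symmetric function by working with symmetric polynomials in the $n$ variables $x_1,\dots,x_n$.

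The payoff is that $\mathcal M$ is \emph{singular}. Indeed $e_n$ vanishes on any alphabet of fewer than $n$ letters, so $\mathcal M(e_n)=0$; equivalently, since $X_{K_n}=n!\,e_n\ne 0$ and $K_n$ admits no homomorphism into $K_n\setminus e$ (whose clique number is $n-1$), we get $\mathcal M(X_{K_n})=X_{K_n}^{H}=0$. Hence $\ker\mathcal M\ne 0$, so $\dim(\mathrm{im}\,\mathcal M)\le p(n)-1$, and since $V=\mathrm{span}\{\mathcal M(X_G)\}\subseteq\mathrm{im}\,\mathcal M$ this forces $\dim V\le p(n)-1$, so $V\ne\Lam^n$. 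Combining the two bounds yields $\dim V=p(n)-1$, i.e.\ codimension exactly $1$. The main obstacle is the substitution identity, where all the structural content lives: the folding bijection is elementary, but converting it to the displayed equality of symmetric functions requires care with (i) the sum over labellings $\phi$ of $V(H)$, which symmetrizes the merged expression and produces the constant $2(n-2)!$, and (ii) the finite-versus-infinite variable passage flagged after Lemma \ref{lem:star_p_expansion}, handled here because every term has degree $n$ and lives among symmetric polynomials in $x_1,\dots,x_n$. I expect the triangularity and singularity steps to be routine once the identity is established.
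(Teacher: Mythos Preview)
Your proof is correct and takes a genuinely different route from the paper's. Both halves of your sandwich differ from the paper's argument. For the lower bound, the paper does not exhibit an explicit linearly independent family; instead it builds an invertible (triangular) linear map between the coefficient vectors $\big([m_\lam^n]X_G^H\big)_{\lam\neq 1^n}$ and $\big([m_\lam^{n-1}]X_G^{K_{n-1}}\big)_\lam$, then composes with the scalar relation to ordinary CSF coefficients and invokes the fact that the $X_G$'s span $\Lambda^n$. Your direct use of the $X_{K_\lam}^H$'s for $\ell(\lam)\le n-1$ is cleaner and avoids any appeal to the Cho--van Willigenburg chromatic bases. For the upper bound, the paper again works coefficient-by-coefficient, doing casework on which of the two nonadjacent vertices $u,v$ are hit by a homomorphism, to derive a single fixed linear relation among the $[m_\lam^n]X_G^H$'s. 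Your folding bijection and the resulting substitution identity $X_G^{K_n\setminus e}=\mathcal M(X_G)$ package the same underlying idea (identifying $u$ and $v$ to reach $K_{n-1}$) into one global operator, so that the nonspanning follows immediately from $e_n\in\ker\mathcal M$. The paper's coefficient-level approach has the advantage that it generalizes verbatim to Propositions~\ref{prop:clone_non_basis} and~\ref{prop:k_clones_non_basis} (clone vertices over a vertex-transitive base, and $K_n$ minus a matching), where your merge operator would need to be iterated or modified; your approach is shorter and more conceptual for the single-edge case.
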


\begin{proof}
    For each $n$-vertex graph $G$, we will relate the coefficients $[m_\lam^n]X_G^H$ to the coefficients $[m_\lam^{n-1}]X_G^{K_{n-1}}$, and then use that to show that there is a fixed linear relation between the coefficients of $X_G^H$ that is independent of $G$, hence implying that all $X_G^H$'s lie in the proper subspace of $\Lam^n$ consisting of linear combinations of $m_\lam^n$'s whose coefficients satisfy that linear relation.

    Let $u$ and $v$ be the two nonadjacent vertices of $H$. We know that $[m_\lam^n]X_G^H$ counts the number of maps from $G$ to $H$ of type $\lam$. We can split those maps into cases based on which of $u$ or $v$ or both have vertices of $G$ mapping onto them. 
    \begin{itemize}
        \item If $u$ is not used, we can choose any map of type $\lam$ from $G$ to $H-u\cong K_{n-1}$, which can be done in $[m_\lam^{n-1}]X_G^{K_{n-1}}$ ways.
        \item If $u$ is used but $v$ is not, we need to choose a map of type $\lam$ from $G$ to $H-v\cong K_{n-1},$ of which there are $[m_\lam^{n-1}]X_G^{K_{n-1}}$ total. Of those maps to $K_{n-1}$ of type $\lam$, $\ell(\lam)/(n-1)$ of them use $u$, since each map uses $\ell(\lam)/(n-1)$ of the vertices and all vertices are equally likely. Thus, there are $\ell(\lam)/(n-1)\cdot [m_\lam^{n-1}]X_G^{K_{n-1}}$ maps using $u$ but not $v$.
        \item If both $u$ and $v$ are used, we can imagine contracting them into one vertex to get a map from $G$ to $H/uv\cong K_{n-1}$ of type $\mu$, where $\mu$ is obtained from $\lambda$ by combining the two parts $\lam_i$ and $\lam_j$ of $\lam$ corresponding to the numbers of vertices of $G$ mapping onto $u$ and $v.$ The number of such maps is $[m_\mu^{n-1}]K_{n-1}$. The fraction of such maps that actually use the merged vertex $uv$ as a vertex of the desired weight $\lam_i+\lam_j$ is $r_{\lam_i+\lam_j}(\mu)/(n-1)$, since each map uses $r_{\lam_i+\lam_j}(\mu)$ vertices out of the $n-1$ vertices as vertices with $\lam_i+\lam_j$ vertices of $G$ mapping to them, and all vertices of $H$ are equally likely to be used for that. Then if $\lam_i$ and $\lam_j$ were the two merged parts, there are $\binom{\lam_i+\lam_j}{\lam_i}$ ways to choose which of the vertices of $G$ mapping onto $uv$ map onto $u$ and which ones map onto $v$. Putting this together, we get $r_{\lam_i+\lam_j}(\mu)/(n-1)\cdot \binom{\lam_i+\lam_j}{\lam_i}\cdot [m_\mu^{n-1}]K_{n-1}$ maps for each choice of $\mu$.
    \end{itemize}
    Combining these cases gives $$[m_\lam^n]X_G^H = \left(1+\frac{\ell(\lam)}{n-1}\right)\cdot[m_\lam^{n-1}]X_G^{K_{n-1}}+\sum_{\substack{\mu\tn{ from merging }\\ \lam_i\tn{ and }\lam_j}}\frac{r_{\lam_i+\lam_j}(\mu)}{n-1}\cdot\binom{\lam_i+\lam_j}{\lam_i}\cdot[m_\mu^{n-1}]X_G^{K_{n-1}}.$$ The key point is that for every $n$-vertex graph $G$, we can write $[m_\lam^n]X_G^H$ as a linear combination of the form 
    \begin{equation}\label{eqn:transition_formula}
        [m_\lam^n]X_G^H = a_\lam \cdot [m_\lam^{n-1}]X_G^{K_{n-1}}+\sum_{\ell(\mu)<\ell(\lam)}b_{\lam\mu}\cdot [m_\mu^{n-1}]X_G^{K_{n-1}}
    \end{equation} where $a_\lam\ne 0$, $b_{\lam\mu}= 0$ unless $\ell(\mu)<\ell(\lam)$, and the constants $a_\lam$ and $b_{\lam\mu}$ do not depend on the graph $G$. If we let $\lam$ range over all partitions of $n$ except $1^n$ and order the partitions by length, this gives a triangular transition matrix with nonzero diagonal entries from the coefficients $[m_\lam^{n-1}]X_G^{K_{n-1}}$ to the coefficients $[m_\lam^n]X_G^H$. This transition matrix must then be invertible, so we can multiply by the inverse matrix to write every $[m_\lam^{n-1}]X_G^{K_{n-1}}$ with $\lam\ne 1^n$ as a linear combination of the coefficients $[m_\lam^n]X_G^H$ with $\lam\ne 1^n.$ 
    
    Then if we plug in $\lam=1^n$ in (\ref{eqn:transition_formula}), the $[m_{1^n}^{n-1}]X_G^{K_{n-1}}$ term is 0, since there can be no map from $G$ to $K_{n-1}$ sending all $n$ vertices of $G$ to different vertices of $K_{n-1}$, as $K_{n-1}$ only has $n-1$ vertices. The remaining terms can then all be written as linear combinations of the $[m_\lam^n]X_G^H$'s with $\lam \ne 1^n$, as we just noted. Thus, we can plug in those linear combinations to write $[m_{1^n}^n]X_G^H$ as a linear combination of other coefficients $[m_\lam^n]X_G^H$ with $\lam\ne 1^n$, giving a nontrivial linear relation among the $[m_\lam^n]X_G^H$ coefficients that holds for every $n$-vertex graph $G$, since none of the coefficients in the linear relation depend on $G$. This show that the $X_G^H$'s all live in the subspace of $\Lam^n$ consisting of elements whose coefficients satisfy that linear relation, so the $X_G^H$'s do not span $\Lam^n$.

    To see that the $X_G^H$'s span a subspace of $\Lam^n$ of codimension 1, we note that since (\ref{eqn:transition_formula}) describes an invertible linear map between the coefficients of $X_G^{K_{n-1}}$ and the coefficients of $X_G^H$ excluding the $m_{1^n}^n$ coefficient, this also gives us a fixed linear map from $X_G^H$ to $X_G^{K_{n-1}}$, where we can just ignore the $m_{1^n}^n$ coefficient of $X_G^H$, and this map is the same for all $G$. But the coefficients of $X_G^{K_{n-1}}$ are related by another invertible linear map to the coefficients of the ordinary CSF $X_G$, excluding the $m_{1^n}$ coefficient of $X_G$, since for $\ell(\lam)<n$, $[m_\lam]X_G^{K_{n-1}}$ is a scalar multiple of $[m_\lam]X_G$, with the scale factor depending only on $\lam$ and $n$ and not on $G$. It is know from \cite{cho2015chromatic} that the ordinary CSFs $X_G$ span $\Lam^n$. The image of the map from $\Lam^n\to \Lam^n$ sending $m_{1^n}$ to 0 and fixing all other monomials has codimension 1, so the images of the $X_G$'s under that map span that subspace of codimension 1. Then we can apply an invertible linear map to this subspace that scales all the remaining $m_\lam$'s such that $X_G$ maps to $X_G^{K_{n-1}}$, showing that the $X_G^{K_{n-1}}$'s span the same subspace of codimension 1. Then the map taking the $X_G^{K_{n-1}}$'s to the $X_G^H$'s is also invertible, as noted above, so the $X_G^H$'s also span a subspace of $\Lam^n$ of codimension 1.
\end{proof}

We can also generalize this argument a bit further:

\begin{prop}\label{prop:clone_non_basis}
    Suppose $H$ is an $n$-vertex graph containing two nonadjacent vertices $u$ and $v$ both with the same set of neighbors in $H$, such that the graph $H'=H-u\cong H-v\cong H/uv$ is vertex-transitive (meaning for any two of its vertices $w$ and $x$, there is an automorphism taking $w$ to $x$).Then the $H$-CSFs $X_G^H$ do not span $\Lam^n$, and in fact their span is a subspace of the codimension 1 subspace from Proposition \ref{prop:complete_minus_edge_non_basis} with dimension equal to the span of the $H$-CSFs $X_G^{H'}$.
\end{prop}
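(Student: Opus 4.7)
The plan is to mimic the proof of Proposition \ref{prop:complete_minus_edge_non_basis} almost verbatim, with the vertex-transitive graph $H'$ playing the role that $K_{n-1}$ played there. Since $u$ and $v$ are nonadjacent and share the same neighborhood, swapping $u$ and $v$ is an automorphism of $H$, so $H-u\cong H-v$; identifying $u$ and $v$ then produces a vertex with exactly this common neighborhood, so $H/uv\cong H-u$ as well, confirming that all three contracted/deleted graphs coincide with $H'$. This is what allows the same three-case decomposition to go through.

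First I would split the type-$\lam$ maps $f:G\to H$ into the three cases (a) $u\notin f(V(G))$, (b) $u\in f(V(G))$ and $v\notin f(V(G))$, and (c) $u,v\in f(V(G))$. Case (a) contributes exactly $[m_\lam^{n-1}]X_G^{H'}$. Case (b) is, by the $u\leftrightarrow v$ symmetry of $H$, the number of type-$\lam$ maps to $H-v\cong H'$ that hit the specific vertex playing the role of ``$u$''. Case (c) is handled by contracting $\{u,v\}$: it records type-$\mu$ maps to $H/uv\cong H'$, where $\mu$ is a partition of $n$ obtained from $\lam$ by merging two parts $\lam_i,\lam_j$, together with $\binom{\lam_i+\lam_j}{\lam_i}$ ways to split the preimage of the merged vertex back into preimages of $u$ and $v$.

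The key ingredient — and what I expect to be the only real subtlety — is the averaging step based on vertex-transitivity. I would show that among all type-$\mu$ maps $G\to H'$, the fraction that use a fixed vertex $w\in V(H')$ as a vertex with exactly $k$ preimages equals $r_k(\mu)/(n-1)$. The point is that $\Aut(H')$ acts on $\tn{Hom}(G,H')$ preserving the type $\mu$ of each map, so the count of type-$\mu$ maps using $w$ in the ``size-$k$ slot'' depends only on the $\Aut(H')$-orbit of $w$; vertex-transitivity collapses all orbits to one, and averaging over the $n-1$ vertices gives the claimed fraction. Applied to case (b) (with $k=1$) and case (c) (with $k=\lam_i+\lam_j$), this reproduces the factors $\ell(\lam)/(n-1)$ and $r_{\lam_i+\lam_j}(\mu)/(n-1)$ that appeared in Proposition \ref{prop:complete_minus_edge_non_basis}.

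Summing the three cases yields a formula identical in shape to (\ref{eqn:transition_formula}) but with $X_G^{H'}$ in place of $X_G^{K_{n-1}}$, and crucially with the \emph{same} coefficients $a_\lam$ and $b_{\lam\mu}$, since those depend only on $\lam$, $\mu$, and $n$, not on the internal structure of $H'$. Ordering partitions by length gives the same triangular invertible transition between the vectors $([m_\mu^{n-1}]X_G^{H'})_{\mu\ne 1^n}$ and $([m_\lam^n]X_G^H)_{\lam\ne 1^n}$. Since $[m_{1^n}^{n-1}]X_G^{H'}=0$ (there are only $n-1$ vertices in $H'$), no information is lost, so this triangular map induces a linear isomorphism between $\langle X_G^{H'}\rangle$ and $\langle X_G^H\rangle$ as $G$ ranges over $n$-vertex graphs, establishing the dimension equality. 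Substituting $\lam=1^n$ into the new formula expresses $[m_{1^n}^n]X_G^H$ in terms of the $([m_\mu^{n-1}]X_G^{H'})_{\mu\ne 1^n}$; inverting the triangular system rewrites this as a linear relation among the $([m_\lam^n]X_G^H)_{\lam\vdash n}$ whose coefficients are identical to those in Proposition \ref{prop:complete_minus_edge_non_basis}, placing $\langle X_G^H\rangle$ inside the very same codimension-$1$ hyperplane.
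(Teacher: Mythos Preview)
Your proposal is correct and follows essentially the same approach as the paper: the same three-case split by which of $u,v$ lie in the image, the same appeal to vertex-transitivity of $H'$ for the averaging factors, and hence the identical triangular transition to $X_G^{H'}$ and the same codimension-one linear relation. One small slip: in case (b) you need the fraction of type-$\lambda$ maps using the fixed vertex \emph{at all}, namely $\sum_{k\ge 1} r_k(\lambda)/(n-1)=\ell(\lambda)/(n-1)$, rather than the ``$k=1$'' instance of your general formula (which would give $r_1(\lambda)/(n-1)$), though you clearly have the right factor in mind.
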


\begin{proof}
    The argument is essentially the same as for Proposition \ref{prop:complete_minus_edge_non_basis}. For each $n$-vertex graph $G$ and each partition $\lam$ of $n$, we can compute $[m_\lam^n]X_G^H$ by splitting the maps from $G$ to $H$ into cases based on whether or not each of $u$ or $v$ is used. The number of maps with $u$ not used is $[m_\lam^{n-1}]X_G^{H-u}$. The number of maps with $v$ not used but $u$ used is $\ell(\lam)/(n-1)\cdot[m_\lam^{n-1}]X_G^{H-v}$, since the vertex-transitive property guarantees that $u$ has the same chance of being used as all other vertices in $H-v$, so it is used in $\ell(\lam)/(n-1)$ of the possible maps of type $\lam$, since $\ell(\lam)$ of the $n-1$ vertices are used in each such map. Finally, the number of maps with $u$ and $v$ both used can be split into cases as before, with each case corresponding to a partition $\mu$ formed by merging two parts of $\lam$. Since $H-u\cong H-v\cong H/uv \cong H'$, we get $$[m_\lam^n]X_G^H = \left(1+\frac{\ell(\lam)}{n-1}\right)\cdot[m_\lam^{n-1}]X_G^{H'}+\sum_{\substack{\mu\tn{ from merging }\\ \lam_i\tn{ and }\lam_j}}\frac{r_{\lam_i+\lam_j}(\mu)}{n-1}\cdot\binom{\lam_i+\lam_j}{\lam_i}\cdot[m_\mu^{n-1}]X_G^{H'}.$$ This gives us the same triangular matrix as in Proposition \ref{prop:complete_minus_edge_non_basis}, now sending the coefficients $[m_\lam^{n-1}]X_G^{H'}$ to the coefficients $[m_\lam^n]X_G^H$ for all $\lam$ with $\ell(\lam)<n$. Thus, we can invert the matrix to solve for the coefficients $[m_\lam^{n-1}]X_G^{H'}$ in terms of the coefficients $[m_\lam^n]X_G^H$, and then we can plug that in to get exactly the same expression as before for $[m_{1^n}^n]X_G^H$ in terms of the remaining monomial coefficients of $X_G^H$. Thus, we actually get the same fixed linear relation among the coefficients of $X_G^H$ as in the case where $H$ is the complete graph minus an edge, so the $X_G^H$'s all lie in the subspace of $\Lam^n$ defined by that linear relation.

    Since we have an invertible linear relation between the coefficients of the $H$-CSFs $X_G^H$ excluding their $m_{1^n}$ coefficient and the full set of coefficients of the $H'$-CSFs $X_G^{H'}$ (which do not have an $m_{1^n}$ term as $H'$ has only $n-1$ vertices), and also the $m_{1^n}$ coefficient of $X_G^H$ can be expressed in terms of the remaining coefficients, we get an invertible linear map between the $X_G^H$'s and the $X_G^{H'}$'s. Thus, the span of the $X_G^H$'s must have the same dimension as the span of the $X_G^{H'}$'s.
\end{proof}

We can also generalize Proposition \ref{prop:complete_minus_edge_non_basis} in a different direction:

\begin{prop}\label{prop:k_clones_non_basis}
    Suppose $H$ is formed by removing $k$ non-overlapping edges from $K_n$ for some $k\le \lfloor n/2\rfloor.$ Then the $H$-CSFs $X_G^H$ span a subspace of $\Lam^n$ of dimension equal to the number of partitions of $\lam$ of length at most $n-k$.
\end{prop}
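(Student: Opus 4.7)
The plan has two parts: a matching lower bound and an upper bound on the dimension.

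For the \textbf{lower bound}, I would follow the template of Propositions \ref{prop:clique_containing_basis} and \ref{prop:union_of_cliques_basis} by taking $G_\lam = K_\lam$ (the complete multipartite graph with parts $\lam$) for each $\lam\vdash n$ with $\ell(\lam)\le n-k$. Since $H$ contains an $(n-k)$-vertex clique, formed by picking one endpoint of each bad pair together with all $n-2k$ free vertices, the map sending each part of $K_\lam$ to a distinct vertex of this clique is a homomorphism of type $\lam$, so $[m_\lam^n]X_{K_\lam}^H\ne 0$. Any homomorphism $K_\lam\to H$ must send distinct parts of $K_\lam$ to distinct vertices of $H$, so $m_\lam^n$ is the shortest monomial in $X_{K_\lam}^H$. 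Ordering partitions by length yields a triangular transition matrix to the $m$-basis with nonzero diagonal, so $\{X_{K_\lam}^H : \ell(\lam)\le n-k\}$ is linearly independent. This gives $\dim\ge\#\{\lam\vdash n : \ell(\lam)\le n-k\}$.

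For the \textbf{upper bound}, I would iterate the case-split of Proposition \ref{prop:complete_minus_edge_non_basis} across the $k$ bad pairs. For each bad pair $(u_i,v_i)$ of $H$, the vertices $u_i,v_i$ are non-adjacent twins sharing the common neighborhood $V(H)\setminus\{u_i,v_i\}$, so partitioning homomorphisms $f:G\to H$ by which of $\{u_i,v_i\}$ lie in the image (neither, exactly one, or both) relates $[m_\lam^n]X_G^H$ to coefficients of $X_G^{H'}$, where $H'\cong H-u_i\cong H-v_i\cong H/u_iv_i$ is $K_{n-1}$ minus a $(k-1)$-matching. Iterating $k$ times arrives at $K_{n-k}$. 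Composing the transitions should produce a $G$-independent linear system expressing each $[m_\lam^n]X_G^H$ as a combination of $[m_\mu^{n-k}]X_G^{K_{n-k}}$ for $\mu\vdash n$. Since $[m_\mu^{n-k}]X_G^{K_{n-k}}=0$ whenever $\ell(\mu)>n-k$, only $\#\{\mu:\ell(\mu)\le n-k\}$ of these quantities ever contribute, bounding the dimension of the span of the $X_G^H$'s from above by this number. Combined with the lower bound, equality follows, and the vanishing of the $[m_\mu^{n-k}]X_G^{K_{n-k}}$ for $\ell(\mu)>n-k$ yields exactly $\sum_{i=0}^{k-1}p(i)$ independent linear relations on the $[m_\lam^n]X_G^H$'s.

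The \textbf{main obstacle} is that the intermediate graphs $K_{n-j}$ minus a $(k-j)$-matching, for $1\le j<k$, are \emph{not} vertex-transitive: their automorphism groups have two orbits, ``matched'' and ``free'' vertices. Proposition \ref{prop:complete_minus_edge_non_basis} used vertex-transitivity of $K_{n-1}$ crucially to assert that the number of type-$\lam$ maps using any specified vertex equals the uniform fraction $\ell(\lam)/(n-1)$ of the total, but this identity fails for the non-vertex-transitive intermediates. To push the iteration through, I would enrich the bookkeeping by tracking a \emph{refined type} $(\lam^f,\lam^m)$ that records how the parts of $\lam$ split between preimages at free and matched vertices of the intermediate graph; the refined counts satisfy a larger linear system whose ``free/matched'' symmetry can be exploited to close the iteration $G$-independently over all $k$ steps. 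This bookkeeping is the main technical step; I do not see a conceptually different route, though a global inclusion-exclusion $X_G^H=\sum_{S\subseteq[k]}(-1)^{|S|}Y_G^S$ over which bad edges are ``hit'' by a $G$-edge, combined with analyzing the ``valid split'' factor $\prod_{i\in S}(2^{|g^{-1}(w_i)|}-2^{c(G[g^{-1}(w_i)])})$ at contracted images, could serve as an alternative.
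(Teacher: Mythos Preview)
Your lower bound via the $K_\lam$'s is correct and clean: the $(n-k)$-clique in $H$ (one endpoint per bad pair together with the free vertices) gives the needed type-$\lam$ homomorphisms, and triangularity by length yields linear independence. The paper does not argue the lower bound this way; it instead gets both bounds simultaneously from an invertible $G$-independent linear map between the $X_G^H$'s and the $X_G^{K_{n-k}}$'s, and then invokes the known dimension of the span of the $X_G^{K_{n-k}}$'s (via \cite{cho2015chromatic} after scaling). Your direct argument is a perfectly good substitute for the lower half.

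For the upper bound, the obstacle you identify is real, but the fix is simpler than the refined-type bookkeeping you propose. The paper avoids the non-vertex-transitive intermediates altogether by doing the reduction in a \emph{single step} rather than iterating: partition $\{1,\dots,k\}=S_u\sqcup S_v\sqcup S_{uv}$ according to which of $u_i,v_i$ lie in the image of a type-$\lam$ map $G\to H$, then delete each unused $u_i$ (for $i\in S_u$) and $v_i$ (for $i\in S_v$) and contract each $u_iv_i$ (for $i\in S_{uv}$). Every such operation removes one vertex, so you land directly on $K_{n-k}$, which \emph{is} vertex-transitive, and the uniform-probability arguments of Proposition~\ref{prop:complete_minus_edge_non_basis} go through verbatim there. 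Summing over $(S_u,S_v,S_{uv})$ produces a $G$-independent relation
\[
[m_\lam^n]X_G^H \;=\; a_\lam\,[m_\lam^{n-k}]X_G^{K_{n-k}} \;+\; \sum_{\ell(\mu)<\ell(\lam)} b_{\lam\mu}\,[m_\mu^{n-k}]X_G^{K_{n-k}},
\]
with $a_\lam\ne 0$ coming from the terms with $S_{uv}=\varnothing$. This is exactly the triangular system you want, obtained without ever passing through $K_{n-j}$ minus a $(k-j)$-matching. So your plan is on the right track; you just need to collapse the $k$ iterative steps into one simultaneous case-split whose target graph is the vertex-transitive $K_{n-k}$.
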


\begin{proof}
    The idea will be to relate the coefficients of $X_G^H$ to the coefficients of $X_G^{K_{n-k}}$. Let $u_1v_1,\dots,u_kv_k$ be the $k$ removed edges. To compute $[m_\lam^n]X_G^H$, we can consider cases based on which of which of the $u_i$'s and $v_i$'s are used in the map from $G$ to $H$. For each map of type $\lam$ from $G$ to $H$, we can write $\{1,2,\dots,k\}=S_u\sqcup S_v\sqcup S_{uv}$, where $S_u$ is the set of indices $i$ such that $u_i$ is not used, $S_v$ is the set of indices $i$ such that $u_i$ is used but $v_i$ is not, and $S_{uv}$ is the set of indices such that both $u_i$ and $v_i$ are used. 

    If we remove each $u_i$ for $i\in S_u$, remove each $v_i$ for $i\in S_v$, and then contract each edge $u_iv_i$ with $i\in S_{uv}$, our map from $G$ to $H$ of type $\lam$ turns into a map from $G$ to $K_{n-k}$ of type $\mu$, where $\mu$ is obtained from $\lam$ by merging $|S_{uv}|$ pairs of parts of $\lam$ into $|S_{uv}|$ single parts of $\mu$. Now we need to count how many maps of type $\lam$ from $G$ to $H$ falling under this particular case for $S_u,S_v,$ and $S_{u\sqcup v}$ correspond to each map of type $\mu$ from $G$ to $K_{n-k}$. 
    
    First, we must ensure that the vertex $u_i$ actually gets used for every $i\in S_v$, and that for each $\ell\in S_{uv}$, the merged vertex $u_\ell v_\ell$ gets used and corresponds to a part of the desired size $\lam_{i_\ell}+\lam_{j_\ell}$. Thus, we have $|S_v|+|S_{uv}|$ particular vertices that need to be in the image. All $\binom{n-k}{\ell(\mu)}$ sets of $\ell(\mu)$ vertices are equally likely by the symmetry of $K_{n-k}$, and the number of such sets that contain all $|S_v|+|S_{uv}|$ of the desired vertices is $\binom{n-k-|S_v|-|S_{uv}|}{\ell(\mu)-|S_v|-|S_{uv}|}$, so the chance our vertices in $S_v$ and $S_{uv}$ all get used is $\binom{n-k-|S_v|-|S_{uv}|}{\ell(\mu)-|S_v|-|S_{uv}|}/\binom{n-k}{\ell(\mu)}$. Then the chance that the vertices in $|S_{uv}|$ all have the weights we want them to is a probability $P_{\mu,S_{uv}}$ independent of $G$, that can be explicitly computed by counting the number of ways to assign the remaining weights in $\mu$ to the remaining vertices after assigning all the desired weights to the desired vertices, and then dividing by the total number of ways to assign weights to vertices.
    
    Next, for the merged vertices $u_iv_i$ in the $K_{n-k}$ for each $i\in S_{uv}$, we need to choose how to partition the corresponding vertices of $G$ among the two vertices $u_i$ and $v_i$ of $H$. If the merged parts have sizes $\lam_i$ and $\lam_j,$ there are $\binom{\lam_i+\lam_j}{\lam_i}$ ways to do this, and then we need to multiply over all such pairs of merged parts.

    Putting this together and summing over all cases, we get $$[m_\lam^n]X_G^H = \sum_{S_u\sqcup S_v\sqcup S_{uv}=\{1,2,\dots,k\}} \ \sum_{\substack{\mu\tn{ from merging} \\ \lam_{i_\ell}\tn{ and }\lam_{j_\ell} \\ \tn{for }1\le \ell\le |S_{uv}|}}\left([m_\mu^{n-k}]X_G^{K_{n-k}}\cdot \frac{\binom{n-k-|S_v|-|S_{uv}|}{\ell(\mu)-|S_v|-|S_{uv}|}}{\binom{n-k}{\ell(\mu)}}\cdot P_{\mu,S_{uv}}\cdot\prod_{\ell=1}^{|S_{uv}|} \binom{\lam_{i_\ell}+\lam_{j_\ell}}{\lam_{i_\ell}}\right).$$ Like before, the key point is that we can write $$[m_\lam^n]X_G^H = a_\lam \cdot [m_\lam^{n-k}]X_G^{K_{n-k}}+\sum_\mu b_{\lam\mu}\cdot [m_\lam^{n-k}]X_G^{K_{n-k}}$$ where $a_\lam\ne 0$ (since we get nonzero terms when $S_{uv}=\emptyset$), $b_{\lam\mu}=0$ unless $\ell(\mu)<\ell(\lam)$, and the $a_\lam$'s and $b_{\lam\mu}$'s do not depend on our specific choice of $G$. Thus, if we order our partitions by length, we have a triangular transition matrix with nonzero diagonal entries between the coefficients of $X_G^{K_{n-k}}$ and the coefficients of the $X_G^H$, and this matrix is the same for all $G$. That is, we have an invertible linear map on $\Lam^n$ that takes every $X_G^{K_{n-k}}$ to the corresponding $X_G^H$, which means the dimension of the span of the $X_G^H$'s must be the same as the dimension of the span of the $X_G^{K_{n-k}}$'s. But in the $X_G^{K_{n-k}}$'s, there are no monomial terms of the length more than $n-k$, since $K_{n-k}$ has only $n-k$ vertices. Also, the remaining monomial coefficients $[m_\lam^{n-k}]X_G^{K_{n-k}}$ are scalar multiples of the corresponding coefficients $[m_\lam]X_G$, with nonzero scale factors that depend on $\lam,n,$ and $k$ but not on $G$. Thus, there is a linear map from $\Lam^n$ to itself taking all the ordinary CSFs $X_G$ to the corresponding $H$-CSFs $X_G^{K_{n-k}}$, given by sending all partitions of length more than $n-k$ to 0, and scaling each partition of shorter length by some nonzero scale factor. The classic CSFs $X_G$ are known to span $\Lam^n$ by \cite{cho2015chromatic}, so their images under this map span the full image of the map, which has dimension equal to the number of partitions of $n$ of length at most $n-k$. Since we also have an invertible linear map taking the $X_G^{K_{n-k}}$'s to the $X_G^H$'s, the span of the $X_G^H$'s has the same dimension.
\end{proof}

Another thing the authors of \cite{EFHKY22} ask about is constructing bases for $\Lam^n$ where the graphs $G_\lam$ and $H$ may have \emph{\tb{\tcb{loops}}}, i.e. edges connected a vertex to itself. We can indeed generate some new bases by allowing for the graphs to have loops. For instance, while an edgeless simple graph $H$ generally does not allow for $X_{G_\lam}^H$'s that form a basis since $X_G^H=0$ unless $G$ has a loop, if $H$ is an edgeless graph with loops, we can form such a basis as long as the components of $G_\lam$ have sizes equal to the parts of $\lam$, as in the chromatic bases from \cite{cho2015chromatic}:

\begin{prop}\label{prop:edgeless_with_loops_basis}
    Let $H$ be a graph consisting of $n$ or more isolated vertices, with loops attached to at least $n$ of those vertices, but no other edges. For each $\lam\vdash n$, let $G_\lam$ be any graph such that the sizes of the connected components are the parts of $\lam$. Then the $X_{G_\lam}^H$'s form a basis for $\Lam^n$.
\end{prop}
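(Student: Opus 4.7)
The plan is to adapt the triangularity argument used in Propositions~\ref{prop:multipartite_basis} and~\ref{prop:clique_containing_basis}, once I verify two structural facts about homomorphisms $G_\lam\to H$: that the $m_\lam^{|V(H)|}$ coefficient of $X_{G_\lam}^H$ is nonzero, and that every other monomial $m_\mu^{|V(H)|}$ appearing in $X_{G_\lam}^H$ satisfies $\ell(\mu)<\ell(\lam)$.

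First I would observe that, since $H$ has no edges between distinct vertices, any homomorphism $f:G_\lam\to H$ must send each connected component of $G_\lam$ to a single vertex of $H$, and that vertex must carry a loop whenever the component contains at least one edge. Indeed, for any edge $uv$ of $G_\lam$ the image $f(u)f(v)$ must be an edge of $H$, which forces $f(u)=f(v)$ at a looped vertex; propagating along paths within a component then pins the whole component to one vertex. Consequently, the type of $f$ is a partition of $n$ obtained from $\lam$ by grouping its parts into blocks and summing each block, so in particular $\ell(\mu)\le\ell(\lam)$ for every type $\mu$ appearing in $X_{G_\lam}^H$, with equality iff $\mu=\lam$.

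Next I would check positivity of $[m_\lam^{|V(H)|}]X_{G_\lam}^H$ by exhibiting at least one type-$\lam$ homomorphism: send the $\ell(\lam)$ connected components of $G_\lam$ to $\ell(\lam)$ distinct looped vertices of $H$. This is possible because $H$ has at least $n\ge\ell(\lam)$ looped vertices, and any such injective assignment is automatically a valid homomorphism whose preimage sizes are exactly the parts of $\lam$.

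Finally, ordering the partitions of $n$ by $\ell$ in decreasing order produces a triangular transition matrix with nonzero diagonal entries from $\{X_{G_\lam}^H\}_{\lam\vdash n}$ to $\{m_\lam^{|V(H)|}\}_{\lam\vdash n}$, exactly as in the proof of Proposition~\ref{prop:multipartite_basis}. Since the $m_\lam^{|V(H)|}$ are nonzero scalar multiples of the $m_\lam$ and the latter form a basis of $\Lam^n$, the $X_{G_\lam}^H$ must too. I do not anticipate any serious obstacle; the only spot where the specific hypotheses enter is the headcount in the positivity step, where having at least $n$ loops covers all partitions $\lam\vdash n$ uniformly.
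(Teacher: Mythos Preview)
Your proposal is correct and follows essentially the same triangularity argument as the paper's own proof: both observe that every connected component of $G_\lam$ must collapse to a single vertex of $H$, so the only monomial of maximal length in $X_{G_\lam}^H$ is $m_\lam^{|V(H)|}$, and the assumption of at least $n$ looped vertices guarantees this term actually appears. If anything, your write-up is more careful than the paper's in spelling out why components with an edge must land on a looped vertex and why equality $\ell(\mu)=\ell(\lam)$ forces $\mu=\lam$.
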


\begin{proof}
    In $X_{G_\lam}^H$, the longest monomial is $m_\lam$, because each connected component of $G_\lam$ must map to a single vertex of $H$, and since $H$ has at least $n$ different vertices with loops, mapping the components of $G_\lam$ to $\ell(\lam)$ distinct vertices of $H$ is guaranteed to be possible. In that case, we have $\lam_i$ vertices mapping onto whichever vertex in $H$ the component of $G$ of size $\lam_i$ maps onto, so the map has type $\lam$. We have a triangular transition matrix between the $G_\lam$'s and the $m_\lam$'s with nonzero diagonal entries, so the $G_\lam$'s form a basis for $\Lam^n$.
\end{proof}

As another example, we can construct bases where $H$ is a path with a loop on one of its ends, and $G_\lam$ is a disjoint union of such paths:

\begin{prop}\label{prop:path_loop_basis}
    Let $H$ be the $n$-vertex path graph $P_n$ with a loop attached to one end of the path, and for each $\lam\vdash n$, let $G_\lam$ be the disjoint union of $\ell(\lam)$ such paths with lengths equal to the parts of $\lam$, each with a loop attached to one of its endpoints. Then the $X_{G_\lam}^H$'s form a basis for $\Lam^n.$
\end{prop}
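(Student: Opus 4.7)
The plan is to establish a triangular transition relationship between the family $\{X_{G_\lam}^H\}_{\lam \vdash n}$ and the monomial basis $\{m_\mu^n\}_{\mu \vdash n}$ of $\Lam^n$, in the spirit of Propositions \ref{prop:multipartite_basis}, \ref{prop:clique_containing_basis}, and \ref{prop:edgeless_with_loops_basis}, but with two twists: the natural ``diagonal'' pairing will be $\lam \leftrightarrow \lam'$ rather than $\lam \leftrightarrow \lam$, and the triangularity will be with respect to dominance order rather than merely partition length.

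First I would observe that, letting $v_1,\ldots,v_n$ be the vertices of $H$ with the loop at $v_1$, any homomorphism $f:G_\lam \to H$ must send the loop vertex of each component of $G_\lam$ to $v_1$ (the unique looped vertex of $H$). The restriction of $f$ to the $i$-th component is then a walk in $H$ of length $\lam_i - 1$ starting at $v_1$. The central combinatorial lemma is: for any such walk, if $a_j^{(i)}$ denotes the number of visits to $v_j$, then $\sum_{j=1}^k a_j^{(i)} \geq \min(k,\lam_i)$ for every $k$. This holds because either the walk never reaches $v_{k+1}$ (in which case all $\lam_i$ visits already lie in $\{v_1,\ldots,v_k\}$), or it must traverse $v_1,v_2,\ldots,v_k$ in order before first arriving at $v_{k+1}$, taking at least $k$ time units in the prefix. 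Summing over components gives $\sum_{j\leq k}|f^{-1}(v_j)| \geq \sum_i\min(k,\lam_i) = \sum_{j\leq k}\lam'_j$, and since sorting in decreasing order only increases partial sums, the type $\mu$ of $f$ must satisfy $\mu \succeq \lam'$ in dominance order. I expect this walk-prefix estimate to be the main obstacle, though it is essentially elementary.

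Next I would exhibit a specific map realizing the lower bound: the canonical map sending the $i$-th component injectively along $v_1,v_2,\ldots,v_{\lam_i}$ puts exactly $|\{i:\lam_i \geq j\}| = \lam'_j$ vertices onto $v_j$, so it has type exactly $\lam'$. Hence $[m_{\lam'}^n]X_{G_\lam}^H \neq 0$ for every $\lam \vdash n$.

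Finally I would assemble these observations into linear algebra. Setting $A_{\mu,\lam} := [m_\mu^n]X_{G_\lam}^H$ and $B_{\mu,\lam} := A_{\mu,\lam'}$, the two facts above translate respectively to $B_{\mu,\lam} \neq 0 \implies \mu \succeq \lam$ and $B_{\lam,\lam} = A_{\lam,\lam'} \neq 0$ (using the canonical map for $G_{\lam'}$, whose type is $(\lam')' = \lam$). Under any linear extension of the reverse dominance order on partitions of $n$, the matrix $B$ is upper triangular with nonzero diagonal, hence invertible; since conjugation is a bijection on partitions of $n$, $A$ is invertible as well, and $\{X_{G_\lam}^H\}_{\lam\vdash n}$ is a basis of $\Lam^n$. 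This last step is routine bookkeeping once the dominance estimate is in hand.
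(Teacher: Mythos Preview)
Your proposal is correct and follows the same overall architecture as the paper's proof: both use the transpose pairing $\lam\leftrightarrow\lam'$ (the paper writes $\lam^t$) and both establish triangularity of the transition matrix by exhibiting the canonical ``stack the paths along $v_1,v_2,\dots$'' map for the diagonal entry. The difference lies in the triangularity argument itself. The paper works with increasing \emph{lexicographic} order and argues inductively: assuming $\mu_i=\lam_i$ for $i<k$, it shows that some size-$\mu_i$ component must map bijectively onto $v_1,\dots,v_{\mu_i}$ at each level, and then deduces $\mu_k\ge\lam_k$. You instead prove the stronger \emph{dominance} support condition $[m_\mu^n]X_{G_\lam}^H\ne 0\Rightarrow \mu\succeq\lam'$ in one stroke, via the walk-prefix estimate $\sum_{j\le k}a_j^{(i)}\ge\min(k,\lam_i)$ and the fact that sorting increases partial sums. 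Your route is shorter, avoids the case analysis under equality hypotheses, and gives a tighter description of which monomials can appear; the paper's route is more hands-on and makes the extremal structure of the maps explicit at each step. Either suffices for invertibility.
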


\begin{proof}
    Write $\lam^t$ for the transpose of $\lam$, whose $i^{\text{th}}$ part is the number of parts of $\lam$ of size $i$ or more. Arrange the partitions of $n$ in increasing lexicographic order (so $\lam=1^n$ comes first and $\lam=n$ comes last). We claim that under this ordering, $X_{G_\lam}^H$ is the first $H$-CSF to contain an $m_{\lam^t}$ term.
    
    To see this, note first that when mapping $G_\mu$ onto $H$, all $\ell(\mu)$ loop vertices in $G_\mu$ need to map onto the single loop vertex in $H$. Then to get a map of type ${\lam^t}$ from ${G_\mu}$ to $H$, we need to use at least $\ell(\lam^t)=\lam_1$ distinct vertices of $H$, since $\lam^t$ has $\lam_1$ parts. Let the vertices of $H$ be $v_1,v_2,\dots,v_n$ in order, with $v_1$ as the loop vertex. In $G_\mu$, the component of size $\mu_i$ can at most map onto the first $\mu_i$ vertices $v_1,\dots,v_{\mu_i}$ of $H$, since its loop vertex must map onto $v_1$. Thus, overall at most the first $\mu_1$ vertices $v_1,\dots,v_{\mu_1}$ can be used in any map from $G_\mu$ to $H$. Thus, to get an $m_{\lam^t}$ term in $X_{G_\mu}^H$, we first need $\mu_1\ge \lam_1$. 
    
    We will now show that for each $k$, if $\mu_i=\lam_i$ for $i=1,2,\dots,k-1$, then the only way $G_\mu$ can have an $m_{\lam^t}$ term is if $\mu_k\ge \lam_k$. To see this, note that for each $i=1,2,\dots,k$, $\lam^t$ has $\lam_i$ parts of size $i$ or more, which means we need a map from $G_\mu$ to $H$ such that at least $\lam_i$ vertices have $i$ or more vertices of $G_\mu$ mapping onto them. For $i=1,$ at most the vertices $v_1,\dots,v_{\mu_1}$ can have a vertex of $G_\mu$ mapping onto them, so if all of them do, the vertices of a size $\mu_1$ component of $G_\mu$ need to map exactly onto those vertices in a 1-to-1 mapping. Then, at most the vertices $v_1,\dots,v_{\mu_2}$ can have 2 or more vertices of $G_\mu$ mapping onto them, with equality if and only if there is a 1-to-1 mapping from the vertices of a size $\mu_2$ component of $G_\mu$ onto $v_1,\dots,v_{\mu_2}$. Continuing in this manner, we can show by induction that in order to get $\lam_i$ vertices of $H$ with $i$ or more vertices of $G$ mapping onto them, there must be a 1-to-1 mapping from the vertices in a size $\mu_i$ component of $G_\mu$ onto the vertices $v_1,\dots,v_{\mu_i}.$
    
    Then to get at least $\lam_k$ vertices of $H$ with $k$ or more vertices of $G_\mu$ mapping onto them, we can at most use the vertices $v_1,\dots,v_{\mu_k}$, since after mapping the size $\mu_1,\dots,\mu_{k-1}$ components of $G_\mu$ onto $H$, all vertices of $H$ have at most $k-1$ vertices mapping onto them (since they have at most 1 vertex from each component), and $v_{\mu_k}$ is the last vertex of $H$ that can get additional vertices of $G_\mu$ mapping onto it coming from smaller parts of $\mu$. Thus, for there to be $\lam_k$ vertices of $H$ with $k$ or more vertices of $G_\mu$ mapping onto them, we need $\mu_k\ge \lam_k$, as claimed.

    If $\mu_i=\lam_i$ for all $i$, then $\mu=\lam$, and in that case we do get an $m_{\lam^t}$ term in $X_{G_\mu}^H=X_{G_\lam}^H$, because if we map the size $\lam_i$ component onto $v_1,\dots,v_{\lam_i}$, then the number of vertices mapping onto $v_j$ is equal to the number of parts of $G_\lam$ of size $j$ or more, which is precisely the $j^{\text{th}}$ part of $\lam^t$, so the type is $\lam^t$. Otherwise, we can only get a $\lam^t$ term in $X_{G_\mu}^H$ if for some $k$, $\lam_i=\mu_i$ for all $i\le k-1$ but $\mu_i>\lam_i$, which means that $\mu$ is larger than $\lam$ lexicographically.

    Thus, we have a triangular transition matrix with nonzero diagonal entries between the $X_{G_\lam}^H$'s and the monomials $m_{\lam^t}$, so the $X_{G_\lam}^H$'s form a basis for $\Lam^n$.
\end{proof}

Although the results above are substantial on their own, we are nevertheless far from achieving a complete classification of all finite graphs $H$ according to whether or not the $X_{G_\lam}^H$'s span $\Lam^{|V(H)|}$. As such, this leaves open the following question:

\begin{question}
    Besides the cases highlighted above, what are some other necessary and/or sufficient conditions for $H$ such that there exist graphs $G_\lam$ with the same number of vertices where the $X_{G_\lam}^H$'s span $\Lam^{|V(H)|}$? Is it possible to find a single set of conditions that are uniformly necessary and sufficient, in the sense that for any value $n$ of $|V(H)|$, the same conditions, which may be stated as functions of $n$, are necessary and sufficient for every $n$?
\end{question}

The question above is very difficult in its full generality, but if we forget about the structures of the (in)valid choices of $H$ and focus instead on just the number of such graphs for each $n$, we get another interesting question:

\begin{question}
    Let $p_H(n)$ denote the proportion of isomorphism classes of $n$-vertex graphs $H$ such that there exist $n$-vertex graphs $G_\lam$ where the $X_{G_\lam}^H$'s span $\Lam^n$. What is the asymptotic behavior of $p_H(n)$ as $n\rightarrow\infty$?
\end{question}

To get some insight into this question, we have gathered some numerical evidence using our Sage code:

\begin{center} \begin{tabular}{|c|c|c|c|c|c|} \hline
    $n$ & 2 & 3 & 4 & 5 & 6 \\ \hline
    $p_H(n)$ & $1/2=0.5$ & $2/4=0.5$ & $7/11\doteq0.636$ & $27/34\doteq0.794$ & $138/156\doteq0.885$ \\ \hline
\end{tabular} \end{center}
This leads us to believe that $p_H(n)\overset{n\rightarrow\infty}{\longrightarrow}1$, but we are unsure of the rate of convergence. Obtaining further numerical evidence would be tedious, since each calculation of $p_H(n)$ involves computing $X_G^H$ for all $(\#\{n\tn{-vertex graphs}\})^2$ choices of the ordered pair $(G,H)$, and the number of $n$-vertex graphs grows superexponentially with $n$ (for instance, there are 1044 7-vertex graphs, compared to 156 6-vertex ones), not to mention the exponentially growing time-complexity of individual $H$-CSF computations. 



\subsection{Nonexistence of bases \texorpdfstring{$X_G^{H_\lam}$}{XGH} for \texorpdfstring{$\Lam^n$}{Lambda} with fixed \texorpdfstring{$G$}{G}, even with loops}\label{subsec:G_fixed_no_bases}

The authors of \cite{EFHKY22} show that for $n\ge 4$, it is impossible to build a basis for $\Lam^n$ using $H$-chromatic symmetric functions $X_G^{H_\lam}$ with $G$ fixed. Their argument is based on the fact that to get an $m_n$ term, $G$ would need to be edgeless. (For $n\le 3$, if $G$ is the edgeless graph, there does exist a basis of $X_G^{H_\lam}$'s.) However, they ask whether it is possible to form a basis for $\Lam^n$ with $H$-CSFs $X_G^{H_\lam}$ with $G$ fixed if the graphs $G$ and $H_\lam$ are allowed to have loops. We show that this still generally not possible for $n$ sufficiently large (although it is possible for a few more values of $n$):

\begin{prop}\label{prop:no_basis_H_fixed}
    For $n\ge 12$, it is not possible to find a graph $G$ and a set of graphs $H_\lam$ for which the $H$-CSFs $X_G^{H_\lam}$ span $\Lam^n$, even if the graphs $G$ and $H_\lam$ are allowed to have loops.
\end{prop}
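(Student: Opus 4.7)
The plan is to show that for every fixed $n$-vertex graph $G$ (possibly with loops), the linear span $V_G := \tn{span}\{X_G^H : H$ is any graph, possibly with loops$\} \subseteq \Lam^n$ has dimension strictly less than $p(n) = \dim\Lam^n$ whenever $n \ge 12$, which immediately rules out any basis of $\Lam^n$ of the form $\{X_G^{H_\lam}\}$. The argument combines a structural decomposition of $X_G^H$ indexed by isomorphism classes of graph quotients of $G$ with a counting bound at length $2$.

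First I would apply Proposition \ref{prop:coeff_in_general} together with the identity $m_\lam^h = (h-\ell(\lam))!\prod_j r_j(\lam)!\,m_\lam$, where $h := |V(H)|$, and then regroup the sum over $\pi\vdash V(G)$ according to the isomorphism class of the quotient $Q := G/\pi$, obtaining
$$X_G^H \;=\; \sum_Q S(Q,H)\,(h-|V(Q)|)!\,w_Q, \qquad w_Q \;:=\; |\tn{Aut}(Q)|\sum_{\substack{\pi\,\vdash\,V(G)\\ G/\pi\,\cong\,Q}}\bigg(\prod_j r_j(\tn{type}(\pi))!\bigg)m_{\tn{type}(\pi)}.$$
Each $w_Q\in\Lam^n$ depends only on $G$ and $Q$, not on $H$, so $V_G \subseteq \tn{span}\{w_Q : Q\in\mc Q(G)\}$, where $\mc Q(G)$ is the set of isomorphism classes of quotients of $G$. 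Moreover, whenever $G/\pi\cong Q$ one has $\ell(\tn{type}(\pi))=\ell(\pi)=|V(Q)|$, so $w_Q$ lies in the $p(n,|V(Q)|)$-dimensional graded piece of $\Lam^n$ spanned by length-$|V(Q)|$ monomials. Since graded pieces of different lengths are linearly disjoint, $\dim\tn{span}\{w_Q\}\le\sum_\ell \min(|\mc Q_\ell(G)|,\,p(n,\ell))$, where $\mc Q_\ell(G) := \{Q\in\mc Q(G):|V(Q)|=\ell\}$ and $p(n,\ell)$ is the number of partitions of $n$ with exactly $\ell$ parts.

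Next I would take $\ell=2$ and prove the key bound $|\mc Q_2(G)|\le 5$ for every $G$. There are exactly six isomorphism classes of $2$-vertex graphs with possible loops, parameterized by the number of loops ($0$, $1$, or $2$) and the presence or absence of an edge. The class with no loops and no edge arises as a quotient $G/\pi$ only when $G$ itself has neither edges nor loops: any edge $uv$ of $G$ forces either a loop (if $u,v$ lie in the same block of $\pi$) or an edge (if they lie in different blocks) in every $2$-block quotient $G/\pi$, and any loop of $G$ forces a loop on the block containing its endpoint. Hence if $G$ is edgeless and loopless then every $2$-quotient is the empty $2$-vertex graph, giving $|\mc Q_2(G)|=1$; otherwise the empty class is missing and $|\mc Q_2(G)|\le 5$. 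For $n\ge 12$ we have $p(n,2)=\lfloor n/2\rfloor\ge 6>5\ge|\mc Q_2(G)|$, so the length-$2$ contribution to $\dim\tn{span}\{w_Q\}$ already falls short of $p(n,2)$, forcing $\dim V_G<p(n)$ and finishing the proof.

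The main obstacle is the first step: I must verify carefully that regrouping Proposition \ref{prop:coeff_in_general} by the isomorphism class of $Q$ yields a sum in which the $H$-dependence of each coefficient is captured entirely by $S(Q,H)$ and $|V(H)|$, so that $w_Q$ is well-defined independently of the choice of representative $\pi$ with $G/\pi\cong Q$. The rest is comparatively light: the length-grading observation is immediate from $\ell(\tn{type}(\pi))=|V(Q)|$, and the case analysis at $\ell=2$ reduces to enumerating the six isomorphism classes of $2$-vertex graphs with loops and verifying the single obstruction imposed by any edge or loop of $G$.
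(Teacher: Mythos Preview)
Your proof is correct and rests on the same core observation as the paper's: the length-$2$ projection of $X_G^H$ is constrained to lie in a subspace of dimension at most $5$ (governed by the six isomorphism classes of $2$-vertex graphs with loops, minus the edgeless loopless one), which for $n\ge 12$ is strictly smaller than $p(n,2)=\lfloor n/2\rfloor\ge 6$.

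The packaging differs, though, and yours is somewhat cleaner. The paper fixes the \emph{target} side: it writes $\pi_2(X_G^H)$ as a linear combination of $\pi_2(X_G^{H_i})$ over the six $2$-vertex graphs $H_i$, by splitting homomorphisms according to the induced subgraph of $H$ on the two image vertices; it then needs a separate case analysis when $G$ is edgeless (invoking \cite{EFHKY22}) to drop from $6$ to $5$. You instead fix the \emph{source} side, using the global decomposition $X_G^H=\sum_Q c_Q(H)\,w_Q$ over isomorphism classes of quotients $Q$ of $G$, with each $w_Q$ concentrated in length $|V(Q)|$. This buys you two things: the bound $|\mc Q_2(G)|\le 5$ holds uniformly (the edgeless loopless $G$ has $|\mc Q_2(G)|=1$, so no case split is needed), and the same machinery immediately gives dimension bounds at every length $\ell$, not just $\ell=2$, should one want them. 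The paper's route is more hands-on; yours is more structural but proves exactly the same thing by the same counting at $\ell=2$.
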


\begin{proof}
    If the $X_G^{H_\lam}$'s span $\Lam^n$, there must be some linear combination of them giving every length 2 monomial $m_{\lam_1\lam_2}$. The coefficient $[m_{\lam_1\lam_2}^{|V(H)|}]X_G^{H}$ counts the number of maps from $G$ to $H$ of type $\lam_1\lam_2$, each of which corresponds to a map of type $\lam_1\lam_2$ from $G$ to 2-vertex induced subgraph of $H$. If we allow loops, there are 6 different isomorphism classes of 2-vertex graphs, since there can be an edge between the vertices of not (2 options), and there can be a loop attached to both, one, or neither of the vertices (3 options). Call these graphs $H_1,\dots,H_6$, and for $i=1,\dots,6$, let $n_i(H)$ be the number of induced subgraphs of $H$ isomorphic to $H_i$. Then we get \begin{equation}\label{eqn:length2_coeffs1}
        [m_{\lam_1\lam_2}^{|V(H)|}]X_G^H = \sum_{i=1}^6 n_i(H)\cdot [m_{\lam_1\lam_2}^2]X_G^{H_i},
    \end{equation} 
    since we can split the maps from $G$ to $H$ of type $\lam_1\lam_2$ into 6 cases depending on the isomorphism class of the induced subgraph of $G$ defined by the 2 vertices used, and for each $1\le i\le 6$, there are $n_i(H)$ ways to choose the 2 vertices such that the induced subgraph is isomorphic to $H_i$, and then $[m_{\lam_1\lam_2}^2]X_G^{H_i}$ ways to choose a map of type $\lam_1\lam_2$ from $G$ to that pair of vertices in $H$. We have $m_{\lam_1\lam_2}^{|V(H)|}=(|V(H)|-2)!\cdot m_{\lam_1\lam_2}^2$, and both are scalar multiples of $m_{\lam_1\lam_2}$, so we can rewrite (\ref{eqn:length2_coeffs1}) as \begin{equation}\label{eqn:length2_coeffs2}
        [m_{\lam_1\lam_2}]X_G^H = (|V(H)|-2)!\cdot \sum_{i=1}^6 n_i(H)\cdot [m_{\lam_1\lam_2}]X_G^{H_i}.
    \end{equation} 
    Let $\pi_2$ be the projection of $\Lam^n$ onto the subspace spanned by the length 2 monomials, which we will call $\Lam^n_2$. That is, $$\pi_2(m_\lam):= \begin{cases}
        m_\lam & \tn{ if }\ell(\lam)=2, \\
        0 & \tn{else}.
    \end{cases}$$
    Then (\ref{eqn:length2_coeffs2}) implies that for every $H$, $$\pi_2(X_G^H) = (|V(H)|-2)!\cdot \sum_{i=1}^6 n_i(H)\cdot \pi_2(X_G^{H_i}).$$ That is, for every $H$, $\pi_2(X_G^H)$ lies in the subspace of $\Lam^n_2$ spanned by the 6 elements $\pi_2(X_G^{H_i})$, which has dimension at most 6. If the $X_G^{H_\lam}$'s are to span $\Lam^n$, then their projections $\pi_2(X_G^{H_\lam})$ need to span $\Lam^n_2$, which means $\Lam^n_2$ must have dimension at most 6, so $n\le 13$.

    To show that in fact we need $n\le 11$, note that $n=12,13$, $\Lam^n_2$ has dimension exactly 6, so for the $X_G^{H_\lam}$'s to span $\Lam^n$, the 6 $\pi_2(X_G^{H_i})$'s all need to be linearly independent, and in particular nonzero. But one of these $H_i$'s is the edgeless graph with no loops, in which case $X_G^{H_i}$ can only be nonzero if $G$ is edgeless. But if $G$ is edgeless, $X_G^H$ depends only on the number of vertices of $H$, since when mapping an edgeless graph to $H$ it does not matter how the vertices of $H$ are connected to each other. Thus, we do not gain any new bases with $G$ edgeless from allowing the $H_\lam$'s to have loops as opposed to requiring them to be simple, so since it was shown in \cite{EFHKY22} that the $X_G^{H_\lam}$'s cannot span $\Lam^n$ for $G$ edgeless if the $H_\lam$'s are simple, they still do not span $\Lam^n$ if the $H_i$'s may have loops. Then for any $G$ that is not edgeless, the $\pi_2(X_G^{H_i})$'s actually only span a subspace of $\Lam_2^n$ of dimension 5, so they do not span all of $\Lam_2^n$, and thus the $X_G^{H_\lam}$'s cannot span $\Lam^n$ for $n=12,13$.
\end{proof}

On the other hand, we can check that for some small values of $n$, it is possible to get a basis for $\Lam^n$ with $G$ fixed if one allows the $H_\lam$'s to have loops but not if the $H_\lam$'s need to be simple, as the following examples illustrate:

\begin{example}
    For $n=4$, we can use the following graphs:
    \begin{center}
        $G=\vcenter{\hbox{\begin{tikzpicture}
        \graph[nodes={draw, circle, fill=black, inner sep=2pt}, empty nodes, no placement, edges={thick}]{a[x=0,y=0]--b[x=0.8,y=0]--c[x=1.6,y=0];d[x=2.4,y=0]};
        \end{tikzpicture}}}$ \\[6pt]
        $H_4=\hspace{-12pt}\vcenter{\hbox{\begin{tikzpicture}
        \graph[nodes={draw, circle, fill=black, inner sep=2pt}, empty nodes, no placement, edges={thick}]{a[x=0,y=0]--[loop, in=45, out=135, looseness=15]a--[loop, in=315, out=225, looseness=15, draw=none]a};
        \end{tikzpicture}}}\hspace{8pt}
        H_{31}=\hspace{-12pt}\vcenter{\hbox{\begin{tikzpicture}
        \graph[nodes={draw, circle, fill=black, inner sep=2pt}, empty nodes, no placement, edges={thick}]{a[x=0,y=0]--[loop, in=45, out=135, looseness=15]a--[loop, in=315, out=225, looseness=15, draw=none]a;b[x=0.8,y=0]};
        \end{tikzpicture}}}\hspace{20pt}
        H_{2^2}=\vcenter{\hbox{\begin{tikzpicture}
        \graph[nodes={draw, circle, fill=black, inner sep=2pt}, empty nodes, no placement, edges={thick}]{a[x=0,y=0]--b[x=0.8,y=0]};
        \end{tikzpicture}}}\hspace{20pt}
        H_{21^2}=\hspace{-12pt}\vcenter{\hbox{\begin{tikzpicture}
        \graph[nodes={draw, circle, fill=black, inner sep=2pt}, empty nodes, no placement, edges={thick}]{a[x=0,y=0]--[loop, in=45, out=135, looseness=15]a--[loop, in=315, out=225, looseness=15, draw=none]a--b[x=0.8,y=0];c[x=1.6,y=0]};
        \end{tikzpicture}}}\hspace{20pt}
        H_{1^4}=\vcenter{\hbox{\begin{tikzpicture}
        \graph[nodes={draw, circle, fill=black, inner sep=2pt}, empty nodes, no placement, edges={thick}]{a[x=0,y=0]--b[x=0.8,y=0]--c[x=1.6,y=0];d[x=2.4,y=0]};
        \end{tikzpicture}}}$\vspace{-12pt}
    \end{center}
    If we list the $H$-CSFs $X_G^{H_\lam}$ with the $H_\lam$'s in the order shown (reverse lexicographic order), then for each $\lam\vdash 4$, $X_G^{H_\lam}$ is the first $H$-CSF on our list containing an $m_\lam$ term. Thus, we have a triangular transition matrix with nonzero diagonal entries between the $m_\lam$'s and the $X_G^{H_\lam}$'s, the the $X_G^{H_\lam}$'s form a basis for $\Lam^4$.
\end{example}

\begin{example}
    Similarly, for $n=5,$ we can use the following graphs $G$ and $H_\lam$:
    \begin{center}
        $G=\vcenter{\hbox{\begin{tikzpicture}
        \graph[nodes={draw, circle, fill=black, inner sep=2pt}, empty nodes, no placement, edges={thick}]{a[x=0,y=0]--b[x=0.8,y=0]--c[x=0.8,y=0.8]--d[x=0,y=0.8]--a;e[x=1.6,y=0.4]};
        \end{tikzpicture}}}$ \\[6pt]
        $H_5=\hspace{-12pt}\vcenter{\hbox{\begin{tikzpicture}
        \graph[nodes={draw, circle, fill=black, inner sep=2pt}, empty nodes, no placement, edges={thick}]{a[x=0,y=0]--[loop, in=45, out=135, looseness=15]a--[loop, in=315, out=225, looseness=15, draw=none]a};
        \end{tikzpicture}}}\hspace{8pt}
        H_{41}=\hspace{-12pt}\vcenter{\hbox{\begin{tikzpicture}
        \graph[nodes={draw, circle, fill=black, inner sep=2pt}, empty nodes, no placement, edges={thick}]{a[x=0,y=0]--[loop, in=45, out=135, looseness=15]a--[loop, in=315, out=225, looseness=15, draw=none]a;b[x=0.8,y=0]};
        \end{tikzpicture}}}\hspace{20pt}
        H_{32}=\vcenter{\hbox{\begin{tikzpicture}
        \graph[nodes={draw, circle, fill=black, inner sep=2pt}, empty nodes, no placement, edges={thick}]{a[x=0,y=0]--b[x=0.8,y=0]};
        \end{tikzpicture}}}\hspace{20pt}
        H_{2^21}=\vcenter{\hbox{\begin{tikzpicture}
        \graph[nodes={draw, circle, fill=black, inner sep=2pt}, empty nodes, no placement, edges={thick}]{a[x=0,y=0]--b[x=0.8,y=0];c[x=1.6,y=0]};
        \end{tikzpicture}}}$
        $H_{31^2}=\hspace{-12pt}\vcenter{\hbox{\begin{tikzpicture}
        \graph[nodes={draw, circle, fill=black, inner sep=2pt}, empty nodes, no placement, edges={thick}]{a[x=0,y=0]--[loop, in=45, out=135, looseness=15]a--[loop, in=315, out=225, looseness=15, draw=none]a--b[x=0.8,y=0];c[x=1.6,y=0]};
        \end{tikzpicture}}}\hspace{20pt}
        H_{21^4}=\vcenter{\hbox{\begin{tikzpicture}
        \graph[nodes={draw, circle, fill=black, inner sep=2pt}, empty nodes, no placement, edges={thick}]{a[x=0,y=0]--b[x=0.8,y=0]--c[x=1.6,y=0];d[x=2.4,y=0]};
        \end{tikzpicture}}}\hspace{20pt}
        H_{1^5}=\vcenter{\hbox{\begin{tikzpicture}
        \graph[nodes={draw, circle, fill=black, inner sep=2pt}, empty nodes, no placement, edges={thick}]{a[x=0,y=0]--b[x=0.8,y=0]--c[x=0.8,y=0.8]--d[x=0,y=0.8]--a;e[x=1.6,y=0.4]};
        \end{tikzpicture}}}$
    \end{center}
    Again, if we order the $X_G^{H_\lam}$'s in the order shown, then for every $\lam \vdash 5,$ $X_G^{H_\lam}$ will be the first $H$-CSF on our list with an $m_\lam$ term. Thus, the $X_G^{H_\lam}$'s form a basis for $\Lam^5$ by the same triangularity argument.
\end{example}

It might seem that this pattern of choosing $G$ and the $H_\lam$'s should easily generalize, but alas, no further bases with triangular transition matrices were found. Despite that, we found bases for $n=6$ and $n=7$:

\begin{example}
    For $n=6$, we can use $G=\vcenter{\hbox{\begin{tikzpicture}
        \graph[nodes={draw, circle, fill=black, inner sep=2pt}, empty nodes, no placement, edges={thick}]{a[x=0.4,y=0]--b[x=0,y=0.8]--c[x=1.2,y=0]--d[x=0.8,y=0.8]--a--e[x=1.6,y=0.8]--c;f[x=2.2,y=0.4]};
        \end{tikzpicture}}}$ and the following graphs for the $H_\lam$'s:
    \begin{center}
        $\hspace{-12pt}\vcenter{\hbox{\begin{tikzpicture}
        \graph[nodes={draw, circle, fill=black, inner sep=2pt}, empty nodes, no placement, edges={thick}]{a[x=0,y=0]--[loop, in=45, out=135, looseness=15]a--[loop, in=315, out=225, looseness=15, draw=none]a};
        \end{tikzpicture}}}\hspace{-10pt},\hspace{18pt}
        \vcenter{\hbox{\begin{tikzpicture}
        \graph[nodes={draw, circle, fill=black, inner sep=2pt}, empty nodes, no placement, edges={thick}]{a[x=0,y=0]--[loop, in=45, out=135, looseness=15]a--[loop, in=315, out=225, looseness=15, draw=none]a;b[x=0.8,y=0]};
        \end{tikzpicture}}}\hspace{2pt},\hspace{28pt}
        \vcenter{\hbox{\begin{tikzpicture}
        \graph[nodes={draw, circle, fill=black, inner sep=2pt}, empty nodes, no placement, edges={thick}]{a[x=0,y=0]--b[x=0.8,y=0]};
        \end{tikzpicture}}}\hspace{2pt},\hspace{18pt}
        \vcenter{\hbox{\begin{tikzpicture}
        \graph[nodes={draw, circle, fill=black, inner sep=2pt}, empty nodes, no placement, edges={thick}]{a[x=0,y=0]--[loop, in=45, out=135, looseness=15]a--[loop, in=315, out=225, looseness=15, draw=none]a--b[x=0.8,y=0]};
        \end{tikzpicture}}}\hspace{2pt},\hspace{28pt}
        \vcenter{\hbox{\begin{tikzpicture}
        \graph[nodes={draw, circle, fill=black, inner sep=2pt}, empty nodes, no placement, edges={thick}]{a[x=0,y=0]--b[x=0.8,y=0];c[x=1.6,y=0]};
        \end{tikzpicture}}}\hspace{2pt},\hspace{18pt}
        \vcenter{\hbox{\begin{tikzpicture}
        \graph[nodes={draw, circle, fill=black, inner sep=2pt}, empty nodes, no placement, edges={thick}]{a[x=0,y=0]--[loop, in=45, out=135, looseness=15]a--[loop, in=315, out=225, looseness=15, draw=none]a--b[x=0.8,y=0];c[x=1.6,y=0]};
        \end{tikzpicture}}}\hspace{2pt},$ \\[-4pt]
        $\hspace{-12pt}\vcenter{\hbox{\begin{tikzpicture}
        \graph[nodes={draw, circle, fill=black, inner sep=2pt}, empty nodes, no placement, edges={thick}]{a[x=0,y=0]--[loop, in=45, out=135, looseness=15]a--[loop, in=315, out=225, looseness=15, draw=none]a--b[x=0.8,y=0]--[loop, in=45, out=135, looseness=15]b--c[x=1.6,y=0]--[loop, in=45, out=135, looseness=15]c};
        \end{tikzpicture}}}\hspace{-10pt},\hspace{28pt}
        \vcenter{\hbox{\begin{tikzpicture}
        \graph[nodes={draw, circle, fill=black, inner sep=2pt}, empty nodes, no placement, edges={thick}]{a[x=0,y=0]--b[x=0.8,y=0]--[loop, in=45, out=135, looseness=15]b--[loop, in=315, out=225, looseness=15, draw=none]b--c[x=1.6,y=0];d[x=2.4,y=0]};
        \end{tikzpicture}}}\hspace{2pt},\hspace{28pt}
        \vcenter{\hbox{\begin{tikzpicture}
        \graph[nodes={draw, circle, fill=black, inner sep=2pt}, empty nodes, no placement, edges={thick}]{a[x=0,y=0]--b[x=0.8,y=0]--c[x=0.8,y=0.8]--d[x=0,y=0.8]--a;e[x=1.6,y=0.4]};
        \end{tikzpicture}}}\hspace{2pt},\hspace{28pt}
        \vcenter{\hbox{\begin{tikzpicture}
        \graph[nodes={draw, circle, fill=black, inner sep=2pt}, empty nodes, no placement, edges={thick}]{a[x=0.4,y=0]--b[x=0,y=0.8]--c[x=1.2,y=0]--d[x=0.8,y=0.8]--a--e[x=1.6,y=0.8]--c};
        \end{tikzpicture}}}\hspace{2pt},\hspace{28pt}
        \vcenter{\hbox{\begin{tikzpicture}
        \graph[nodes={draw, circle, fill=black, inner sep=2pt}, empty nodes, no placement, edges={thick}]{a[x=0.4,y=0]--b[x=0,y=0.8]--c[x=1.2,y=0]--d[x=0.8,y=0.8]--a--e[x=1.6,y=0.8]--c;f[x=2.2,y=0.4]};
        \end{tikzpicture}}}\hspace{2pt}.
        $
    \end{center}
\end{example}

\begin{example}
    For $n=7$, we can use $G=\vcenter{\hbox{\begin{tikzpicture}
        \graph[nodes={draw, circle, fill=black, inner sep=2pt}, empty nodes, no placement, edges={thick}]{a[x=0,y=0]--b[x=0,y=0.8]--c[x=0.8,y=0]--d[x=0.8,y=0.8]--a--e[x=1.6,y=0.8]--f[x=1.6,y=0]--d;c--e;b--f;g[x=2.3,y=0.4]};
        \end{tikzpicture}}}$ and the following graphs for the $H_\lam$'s:
    \begin{center}
        $\hspace{-12pt}\vcenter{\hbox{\begin{tikzpicture}
        \graph[nodes={draw, circle, fill=black, inner sep=2pt}, empty nodes, no placement, edges={thick}]{a[x=0,y=0]--[loop, in=45, out=135, looseness=15]a--[loop, in=315, out=225, looseness=15, draw=none]a};
        \end{tikzpicture}}}\hspace{-10pt},\hspace{18pt}
        \vcenter{\hbox{\begin{tikzpicture}
        \graph[nodes={draw, circle, fill=black, inner sep=2pt}, empty nodes, no placement, edges={thick}]{a[x=0,y=0]--[loop, in=45, out=135, looseness=15]a--[loop, in=315, out=225, looseness=15, draw=none]a;b[x=0.8,y=0]};
        \end{tikzpicture}}}\hspace{2pt},\hspace{28pt}
        \vcenter{\hbox{\begin{tikzpicture}
        \graph[nodes={draw, circle, fill=black, inner sep=2pt}, empty nodes, no placement, edges={thick}]{a[x=0,y=0]--b[x=0.8,y=0]};
        \end{tikzpicture}}}\hspace{2pt},\hspace{18pt}
        \vcenter{\hbox{\begin{tikzpicture}
        \graph[nodes={draw, circle, fill=black, inner sep=2pt}, empty nodes, no placement, edges={thick}]{a[x=0,y=0]--[loop, in=45, out=135, looseness=15]a--[loop, in=315, out=225, looseness=15, draw=none]a--b[x=0.8,y=0]};
        \end{tikzpicture}}}\hspace{2pt},\hspace{28pt}
        \vcenter{\hbox{\begin{tikzpicture}
        \graph[nodes={draw, circle, fill=black, inner sep=2pt}, empty nodes, no placement, edges={thick}]{a[x=0,y=0]--b[x=0.8,y=0];c[x=1.6,y=0]};
        \end{tikzpicture}}}\hspace{2pt},\hspace{18pt}
        \vcenter{\hbox{\begin{tikzpicture}
        \graph[nodes={draw, circle, fill=black, inner sep=2pt}, empty nodes, no placement, edges={thick}]{a[x=0,y=0]--[loop, in=45, out=135, looseness=15]a--[loop, in=315, out=225, looseness=15, draw=none]a--b[x=0.8,y=0];c[x=1.6,y=0]};
        \end{tikzpicture}}}\hspace{2pt},$ \\[-4pt]
        $\hspace{-12pt}\vcenter{\hbox{\begin{tikzpicture}
        \graph[nodes={draw, circle, fill=black, inner sep=2pt}, empty nodes, no placement, edges={thick}]{a[x=0,y=0]--[loop, in=315, out=225, looseness=15,]a--b[x=0.8,y=0]--[loop, in=315, out=225, looseness=15]b--c[x=0.4,y=0.693]--a};
        \end{tikzpicture}}}\hspace{-10pt},\hspace{28pt}
        \vcenter{\hbox{\begin{tikzpicture}
        \graph[nodes={draw, circle, fill=black, inner sep=2pt}, empty nodes, no placement, edges={thick}]{a[x=0,y=0]--b[x=0.8,y=0]--c[x=1.6,y=0]--d[x=2.4,y=0]};
        \end{tikzpicture}}}\hspace{2pt},\hspace{28pt}
        \vcenter{\hbox{\begin{tikzpicture}
        \graph[nodes={draw, circle, fill=black, inner sep=2pt}, empty nodes, no placement, edges={thick}]{a[x=0,y=0]--b[x=0.8,y=0]--c[x=0.8,y=0.8]--d[x=0,y=0.8]--a};
        \end{tikzpicture}}}\hspace{2pt},\hspace{28pt}
        \vcenter{\hbox{\begin{tikzpicture}
        \graph[nodes={draw, circle, fill=black, inner sep=2pt}, empty nodes, no placement, edges={thick}]{a[x=0,y=0]--b[x=-0.4,y=0.693];a--c[x=-0.4,y=-0.693];a--d[x=0.8,y=0]};
        \end{tikzpicture}}}\hspace{2pt},\hspace{28pt}
        \vcenter{\hbox{\begin{tikzpicture}
        \graph[nodes={draw, circle, fill=black, inner sep=2pt}, empty nodes, no placement, edges={thick}]{a[x=0,y=0]--b[x=0.8,y=0]--c[x=1.6,y=0];d[x=2.4,y=0]--e[x=3.2,y=0]};
        \end{tikzpicture}}}\hspace{2pt},$ \\[10pt]
        $\vcenter{\hbox{\begin{tikzpicture}
        \graph[nodes={draw, circle, fill=black, inner sep=2pt}, empty nodes, no placement, edges={thick}]{a[x=0.4,y=0]--b[x=0,y=0.8]--c[x=1.2,y=0]--d[x=0.8,y=0.8]--a--e[x=1.6,y=0.8]--c};
        \end{tikzpicture}}}\hspace{2pt},\hspace{28pt}
        \vcenter{\hbox{\begin{tikzpicture}
        \graph[nodes={draw, circle, fill=black, inner sep=2pt}, empty nodes, no placement, edges={thick}]{a[x=0.4,y=0]--b[x=0,y=0.8]--c[x=1.2,y=0]--d[x=0.8,y=0.8]--a--e[x=1.6,y=0.8]--c;f[x=2.2,y=0.4]};
        \end{tikzpicture}}}\hspace{2pt},\hspace{28pt}
        \vcenter{\hbox{\begin{tikzpicture}
        \graph[nodes={draw, circle, fill=black, inner sep=2pt}, empty nodes, no placement, edges={thick}]{a[x=0,y=0]--b[x=0,y=0.8]--c[x=0.8,y=0]--d[x=0.8,y=0.8]--a--e[x=1.6,y=0.8]--f[x=1.6,y=0]--d;c--e;b--f;g[x=2.3,y=0.4]};
        \end{tikzpicture}}}\hspace{2pt},\hspace{28pt}
        \vcenter{\hbox{\begin{tikzpicture}
        \graph[nodes={draw, circle, fill=black, inner sep=2pt}, empty nodes, no placement, edges={thick}]{a[x=0,y=0]--b[x=0,y=0.8]--c[x=0.8,y=0]--d[x=0.8,y=0.8]--a--e[x=1.6,y=0.8]--f[x=1.6,y=0]--d;c--e;b--f;g[x=2.3,y=0.4]};
        \end{tikzpicture}}}\hspace{2pt}.$
    \end{center} \vspace{6pt}
\end{example}

However, due to computational constraints, we were unable to figure out whether the same is true for $n=8$. This leaves open the following question:

\begin{question}
    Does there exist a choice of $n$-vertex graphs $G$ and $\{H_\lam\mid\lam\vdash n\}$ such that the $X_G^{H_\lam}$'s span $\Lam^n$ for $n=8$ through $11$?
\end{question}

\section{Recursive formula for \texorpdfstring{$H$}{H}-CSFs}\label{sec:recursive}

The ordinary CSF satisfies several nice recursive properties, such as multiplicativity over disjoint unions $X_{G_1\sqcup G_2} = X_{G_1} X_{G_2}$, multiplicativity over joins using the modified $\odot$ multiplication from \cite{tsujie2018chromatic}, and the contraction-deletion relation $X_G = X_{G\backslash e}-X_{G/e}$ if one uses the weighted CSF from \cite{crew2020deletion}. Unfortunately, most of these properties cannot be recovered in the case of $H$-CSFs. However, we look in this section at a few recursive relations that do work for $H$-CSFs.

\subsection{Breaking down \texorpdfstring{$H$}{H} as a disjoint union}\label{subsec:disjoint_union_H}

One thing we can do is to recursively compute $H$-CSFs over a disjoint union of $H$'s. The authors of \cite{EFHKY22} point out that if $G$ is connected but $H$ is not, the coefficients of $X_G^H$ can be calculated in terms of the coefficients of the $X_G^{H_i}$'s, where the $H_i$'s are the connected components of $H$. We can extend this to a similar relation in cases where $G$ is not connected. This relation can be stated more cleanly if we define a modified $H$-CSF by $\til{X}_G^H$ by replacing all the $m_\lam^{|V(H)|}$'s with $\til{m}_\lam$'s, i.e. $$[\til{m}_\lam]\til{X}_G^H := [m_\lam^{|V(H)}]X_G^H,$$ where $\til{m}_\lam$ is the \emph{\tb{\tcb{augmented monomial symmetric function}}} $$\til{m}_\lam := m_\lam \cdot \prod_{i\ge 1}r_i(\lam)!,$$ where we recall that $r_i(\lam)$ is the number of parts of $\lam$ equal to $i$. Thus, $[\til{m}_\lam]\til{X}_G^H$ counts the number of maps from $G$ to $H$ of type $\lam$. Tusjie \cite{tsujie2018chromatic} defines a modified multiplication $\odot$ on symmetric functions by $\til{m}_\lam \odot \til{m}_\mu := \til{m}_{\lam\sqcup \mu}$, where $\lam\sqcup \mu$ is the partition obtained by taking all parts of $\lam$ together with all parts of $\mu$, counted with multiplicity. For instance, $\til{m}_{5422}\sqcup \til{m}_{321}= \til{m}_{5432221}.$

\begin{prop}\label{prop:H_disjoint_union}
    Suppose $H = H_1\sqcup \dots \sqcup H_\ell$, where the $H_i$'s are disjoint (i.e. there are no edges in $H$ between $H_i$ and $H_j$ for $i\ne j$) but each individual $H_i$ may or may not be connected. Then $$\til{X}_G^H = \sum_{G_1\sqcup \dots \sqcup G_\ell=G}\til{X}_{G_1}^{H_1}\odot \dots \odot \til{X}_{G_\ell}^{H_\ell},$$ where the sum ranges over all ways to partition the connected components of $G$ among $\ell$ possibly empty graphs $G_1,\dots,G_\ell$.
\end{prop}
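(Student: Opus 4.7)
The plan is to prove the identity by comparing coefficients on both sides in the augmented monomial basis $\{\til{m}_\lam\}$ and exhibiting a bijection on the combinatorial objects that those coefficients count. Recall that $[\til{m}_\lam]\til{X}_G^H$ equals the number of graph homomorphisms $f:G\to H$ of type $\lam$. On the right hand side, the definition of the $\odot$ product gives
\[
[\til{m}_\lam]\bigl(\til{X}_{G_1}^{H_1}\odot\cdots\odot\til{X}_{G_\ell}^{H_\ell}\bigr)=\sum_{\lam^{(1)}\sqcup\cdots\sqcup\lam^{(\ell)}=\lam}\ \prod_{i=1}^{\ell}\bigl[\til{m}_{\lam^{(i)}}\bigr]\til{X}_{G_i}^{H_i},
\]
which counts $\ell$-tuples $(f_1,\dots,f_\ell)$ of homomorphisms $f_i:G_i\to H_i$ such that the multiset union of their types is $\lam$.

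Next, I would set up the bijection. Given any homomorphism $f:G\to H$, each connected component $C$ of $G$ has connected image $f(C)$ in $H$, so $f(C)$ lies entirely inside one of the disjoint pieces $H_i$ (since the $H_i$ have no edges between them, a connected subgraph of $H$ cannot straddle two of them). Grouping the connected components of $G$ according to which $H_i$ their image lies in produces a decomposition $G=G_1\sqcup\cdots\sqcup G_\ell$ into (possibly empty) subgraphs, together with restrictions $f_i:=f|_{G_i}:G_i\to H_i$. Conversely, any choice of such a decomposition together with homomorphisms $f_i:G_i\to H_i$ assembles uniquely into a single $f:G\to H$, because the $H_i$ are vertex-disjoint in $H$. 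Hence this is a genuine bijection.

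Finally, I would verify the type-matching condition: if $f_i$ has type $\lam^{(i)}$, meaning the preimage sizes of the vertices of $H_i$ under $f_i$ form the partition $\lam^{(i)}$, then the preimage sizes of all of the vertices of $H=\bigsqcup_i H_i$ under $f$ are exactly the multiset union $\lam^{(1)}\sqcup\cdots\sqcup\lam^{(\ell)}$, since preimages of vertices in different $H_i$'s come from disjoint parts $G_i$ of $G$. (Vertices of $H$ not in the image contribute parts of size 0, which are excluded from the partition type on both sides.) So $f$ has type $\lam$ if and only if the multiset union of the types of the $f_i$'s equals $\lam$, matching the index set of the sum above.

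The main obstacle is essentially bookkeeping: making sure that empty components $G_i$ are handled correctly (they contribute the empty partition to the union and correspond to the unique ``empty'' homomorphism, which is consistent with $\til{X}_{\varnothing}^{H_i}=\til{m}_{\varnothing}=1$, the multiplicative identity for $\odot$), and making sure that a single connected component of $G$ is not split across multiple $G_i$'s (which is exactly what the connectedness argument above rules out). Once these are checked, the coefficient-wise equality follows and the identity holds.
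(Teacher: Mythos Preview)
Your proposal is correct and follows essentially the same approach as the paper: both arguments compare coefficients in the $\til m_\lam$ basis, use that each connected component of $G$ must map into a single $H_i$ to produce the decomposition $G=G_1\sqcup\cdots\sqcup G_\ell$, and then match types via the multiset union $\lam=\lam^{(1)}\sqcup\cdots\sqcup\lam^{(\ell)}$. Your write-up is a bit more explicit about the bijection and the handling of empty $G_i$'s, but the underlying argument is the same.
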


\begin{proof}
    The coefficient $[\til{m}_\lam]\til{X}_G^H$ counts the maps from $G$ to $H$ of type $\lam$. Each connected component of $G$ must map onto a single $H_i$, so we can use casework based on which components of $G$ map onto each $H_i$. The possible cases correspond to the ways to split the connected components of $G$ into $\ell$ sets $G_1,\dots, G_\ell$ such that $G_i$ is the disjoint union of the connected components of $G$ that map onto $H_i.$ Then for each map falling under a particular case, we can write $\lam$ as the disjoint union of $\ell$ partitions $\lam^1,\dots,\lam^\ell$, where $\lam^i$ consists of the parts of $\lam$ corresponding to vertices of $H$ in $H_i$. Then for each $i$, the map from $G$ to $H$ restricts to a map from $G_i$ to $H_i$ of type $\lam^i$, so the number of choices for such a map is $[\til{m}_{\lam^i}]\til{X}_{G_i}^{H_i}.$ The total number of such maps for a particular choices of $\lam^1,\dots,\lam^\ell$ is then the product of these coefficients, and thus the $\til{m}_{\lam^i}$ terms from the $\til{X}_{G_i}^{H_i}$'s multiply to give the contribution to the $\til{m}_\lam$ term in $\til{X}_G^H$ coming from this particular choice of $\lam^1,\dots,\lam^\ell$, since the coefficients multiply, and the monomials themselves also multiply to give $\til{m}_\lam = \til{m}_{\lam^1}\odot \dots \odot \til{m}_{\lam^\ell}$. Summing over all cases for $G_1,\dots,G_\ell$ and $\lam^1,\dots,\lam^\ell$ and using the distributive property of $\odot$ multiplication over addition proves the claimed formula.
\end{proof}

\subsection{Computing \texorpdfstring{$K_n^1$}{Kn1}-CSFS in terms of classic CSFs}\label{subsec:K_n^1}

Next we look at $K_n^1$-CSFs, where $K_n^1$ is the graph formed by attaching a loop to one vertex of the complete graph $K_n$. We will show that for a graph $G$ on $n$ or fewer vertices, its $K_n^1$-CSF can be computed in terms of its CSF and the CSFs of graphs formed by contracting parts of it. For this, we will want to use \emph{\tb{\tcb{weighted graphs}}}, where we have a weight function $w:G\to\{1,2,3,\dots\}$ assigning a positive integer valued weight to each vertex. For a weighted graph $G$, the weighted version of the CSF from \cite{crew2020deletion} is defined by $$X_G := \sum_\kappa \prod_{v\in V(G)}x_{v(G)}^{\kappa(v)},$$ where $\kappa$ ranges over proper colorings of $V(G)$, like for the ordinary CSF. The difference is that the exponent on each color variable is the sum of the weights of the vertices assigned that color, instead of just the number of such vertices.

For a subset $W\se V(G)$ (not necessarily connected), write $G/W$ for the graph formed by contracting all vertices in $W$ into a single vertex whose weight is the sum of the original vertices, such that the new vertex in $G/W$ is connected to all vertices in $G$ with at least one neighbor in $W$. We can now state our result about $K_n^1$-CSFs:

\begin{prop}\label{prop:K_n^1}
    For a graph $G$ on $n$ or fewer vertices with no loops, the $K_n^1$-CSF of $G$ is $$X_G^{K_n^1} = n!\cdot X_G +(n-1)!\cdot \sum_W X_{G/W},$$ where the sum is over all nonempty subsets $W\se V(G)$ such that every vertex in $W$ is adjacent to at least one other vertex in $W$.
\end{prop}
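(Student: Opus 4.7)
The plan is to first establish the identity as an equality of polynomials in the $n$ variables $x_1,\dots,x_n$, and then lift it to the symmetric function identity via the $m_\lam$-basis. Because $|V(G)| \le n$, every partition appearing in any $m$-expansion has length at most $n$, so restricting to $n$ variables loses no information. Working in the polynomial world, I would re-parametrize the defining double sum $\sum_{f,\phi}\prod_v x_{\phi(f(v))}$ by the composition $c := \phi\circ f : V(G) \to \{1,\dots,n\}$: given $c$, the map $f = \phi^{-1}\circ c$ is a valid homomorphism $G\to K_n^1$ iff for every edge $uv$ of $G$ with $c(u)=c(v)$ the common value equals $\phi(v_0)$, where $v_0$ is the loop vertex of $K_n^1$ (since the loop is the only way adjacent vertices can share a label).

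This partitions the sum into two cases. \emph{Case (A):} $c$ is a proper coloring of $G$; then all $n!$ labelings $\phi$ are compatible, so Case (A) contributes exactly $n!\,X_G(x_1,\dots,x_n)$. \emph{Case (B):} $c$ has at least one monochromatic edge; then those edges must all share a common color $k$, forcing $\phi(v_0)=k$ and leaving $(n-1)!$ choices for the remaining labels. The remaining task is to show that the Case (B) generating function equals $\sum_W X_{G/W}(x_1,\dots,x_n)$, where $W$ ranges over nonempty subsets of $V(G)$ in which every vertex has a neighbor inside $W$.

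The intended bijection is as follows. From a Case (B) coloring $c$ with monochromatic color $k$, set $U := c^{-1}(k)$ and let $W \se U$ be the vertices of $U$ having a neighbor in $U$. Routine checks show $W$ has no isolated vertices in $G[W]$ (any in-$U$ neighbor of a $W$-vertex itself lies in $W$), that $U\setminus W$ consists of vertices isolated in $G[U]$ which are also non-adjacent to $W$, and that $c$ pulls back from a proper coloring $c'$ of the weighted graph $G/W$ with contracted-vertex color $k$. Conversely, any pair $(W, c')$ with $W$ satisfying the neighbor condition and $c'$ a proper coloring of $G/W$ pushes forward to a Case (B) coloring on $V(G)$, and the neighbor condition guarantees $G[W]$ has at least one edge, hence at least one monochromatic edge exists.

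The delicate step, which I anticipate will require the most care, is uniqueness of $W$ given $c$. The constraints that $c$ equals $k$ on $W$, that $c|_{V(G)\setminus W}$ is proper, and that vertices of $V(G)\setminus W$ adjacent to $W$ avoid color $k$ together force $W\se U$ and pin down $U\setminus W$ precisely as the isolated vertices of $G[U]$ — leaving no other candidate $W$ among subsets satisfying the neighbor condition. Once this bijection is nailed down, the generating functions match termwise, the polynomial identity follows, and rewriting both sides in the $m_\lam(x_1,\dots,x_n)$-basis before extending to infinitely many variables yields the claimed symmetric function identity.
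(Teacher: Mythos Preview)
Your proposal is correct and follows essentially the same approach as the paper: restrict to $n$ variables, split according to whether any edge of $G$ lands on the loop, recover $W$ as the set of vertices mapped to the loop vertex that have a neighbor also mapped there, and then lift back via the $m_\lam$-basis. Your reparametrization by $c=\phi\circ f$ is a clean way to organize the count of compatible labelings $\phi$ (yielding the $n!$ and $(n-1)!$ factors directly), and your treatment of the uniqueness of $W$ is more explicit than the paper's, but the underlying bijection and the extension argument are the same.
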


\begin{proof}
    If we restrict to $n$ variables $x_1,\dots,x_n$, then $X_G^{K_n^1}$ is the sum of all terms corresponding to ways to map $G$ to $K_n^1$ and then label the vertices of $K_n^1$ with $1,2,\dots,n.$ We can split these terms into cases based on which sets of connected vertices in $G$ map onto the loop vertex. 
    
    If there are no such vertices, the loop vertex is essentially no different from any other vertex, so the terms we get correspond to all ways to map the vertices of $G$ to the $n$ vertices of $K_n$ such that adjacent vertices in $G$ do not map onto the same vertex, and then to choose a labeling of the vertices of $K_n$. The sum of these terms is just $n!$ times $X_G$ restricted to the same $n$ variables, since instead of choosing a map from $G$ to $K_n$, we could instead first choose a proper coloring of $G$ using the $n$ colors $1,2,\dots,n$, and then choose which color corresponds to which vertex of $K_n$ in $n!$ ways.

    On the other hand, if we have some connected subsets $W_1,\dots,W_k$ each of size greater than 1 all of which map onto the loop vertex $v$ of $K_n^1$, then our map from $G$ to $K_n^1$ can equivalently be thought of as a map from $G/W$ to $K_n$ where $W=W_1\sqcup \dots \sqcup W_k$, except that we force the contracted vertex to map to $v$. Thus, we could instead start by choosing a proper coloring of $G/W$ and then choosing which color corresponds to each vertex of $K_n^1$, except that there will only be $(n-1)!$ valid labelings of $K_n^1$ in this case, since $v$ needs to map to the color assigned to $W$. Thus, the sum of the terms for this case is equal to $(n-1)!\cdot X_{G/W}$.

    This shows that all terms on the left side of our expression correspond to terms on the right side. On the other hand, if we start with a term on the right side, it represents a proper coloring of some $G/W$, so it does correspond to a term on the left side sending all vertices in $W$ to $v$. Also, the same term on the right cannot correspond to multiple terms on the left, because $W$ can be uniquely recovered from a given map from $G$ to $K_n^1$ as the set of all vertices mapping to $v$ that also have at least one neighbor mapping to $v$, thus a given map from $G$ to $K_n^1$ can correspond to only one choice of $W$. 
    
    Since we have a 1-to-1 correspondence between the terms, the two sides are equal when we restrict to the first $n$ variables, and thus since they both can only involve monomials $m_\lam$ of length up to $n$, the two sides are also equal when we extend to the full set of variables by replacing $m_\lam(x_1,\dots,x_n)$ with $m_\lam(x_1,x_2,\dots)$, as needed.
\end{proof}

\section{\texorpdfstring{$H$}{H}-chromatic polynomials}\label{sec:H-chromatic_poly}

\subsection{Definition and polynomiality}\label{subsec:polynomiality}

The regular CSF $X_G$ has the property that evaluating the symmetric function at $1^k$ (i.e. setting the first $k$ indeterminates to 1 and the rest to 0) returns the chromatic polynomial $\chi_G$ evaluated at $k$. We generalize this property to $H$-CSFs by defining an $H$-chromatic polynomial analogously:

\begin{definition}
    Let $G$ and $H$ be finite graphs. Then the \emph{\tb{\tcb{$H$-chromatic polynomial}}} of $G$, denoted $\chi_G^H$, is the function $k\mapsto X_G^H(1^k)$.
\end{definition}

Before we proceed, we shall explain why calling it a ``polynomial'' is still appropriate in this case:

\begin{lemma}\label{lem:chi_G^H_is_poly}
    The $H$-chromatic polynomial is always a polynomial.
\end{lemma}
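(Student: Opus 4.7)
The plan is to exploit the monomial expansion
$$X_G^H = \sum_{\lam \vdash n} d_\lam\, m_\lam^{|V(H)|}$$
(with $n := |V(G)|$) that is stated in the introduction. Because $X_G^H$ is homogeneous of degree $n$, this sum is finite, and each $m_\lam^{|V(H)|}$ is by definition a fixed rational scalar multiple of $m_\lam$. Consequently, polynomiality of $\chi_G^H(k) = X_G^H(1^k)$ in $k$ reduces to polynomiality of $m_\lam(1^k)$ in $k$ for each single partition $\lam$.

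For the inner step, I would invoke the standard combinatorial description of $m_\lam$: evaluating at $1^k$ counts the number of distinct ways to assign the parts of $\lam$ to distinct indices in $\{1,2,\ldots,k\}$. A direct count (choose an ordered $\ell(\lam)$-tuple of distinct indices and then quotient by the permutations that fix the multiset $\lam$) gives
$$m_\lam(1^k) = \binom{k}{\ell(\lam)}\cdot\frac{\ell(\lam)!}{\prod_{i\ge 1} r_i(\lam)!},$$
which is a polynomial in $k$ of degree $\ell(\lam) \le n$. Multiplying by the (partition-dependent but $k$-independent) scalar relating $m_\lam^{|V(H)|}$ to $m_\lam$ and summing against the nonnegative integers $d_\lam$ then yields the desired polynomial in $k$, of degree at most $n$.

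The one subtlety worth mentioning is that $X_G^H$ is first defined on $|V(H)|$ variables and then extended to countably many variables by keeping the $m$-expansion intact, so one must verify that the specialization $x_1=\cdots=x_k=1$, $x_i=0$ for $i>k$, makes sense on the extended function. This is clear: only finitely many monomials of each fixed degree contribute, and the coefficients $d_\lam$ do not depend on the ambient variable set, so $X_G^H(1^k)$ is computed by the displayed formula above for any $k\in\mathbb{N}$. There is no real obstacle to this proof; polynomiality is essentially a bookkeeping corollary of the $m$-expansion, with the only genuine content being the well-known formula for $m_\lam(1^k)$. A short explicit formula for $\chi_G^H(k)$ in terms of falling factorials $(k)_{\ell(\lam)}$ could then be stated as a corollary for use in the following subsection.
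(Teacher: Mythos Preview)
Your proposal is correct and follows essentially the same approach as the paper: both reduce to showing that $m_\lam(1^k)$ is a polynomial in $k$, and both derive the identical formula $m_\lam(1^k)=\binom{k}{\ell(\lam)}\cdot\ell(\lam)!/\prod_i r_i(\lam)!$ (the paper writes the second factor as the multinomial $\binom{\ell(\lam)}{r_1(\lam),\dots,r_n(\lam)}$). The only cosmetic difference is that the paper phrases the reduction as ``any symmetric function evaluated at $1^k$ is a polynomial,'' whereas you go through the specific $m$-expansion of $X_G^H$; your extra remark about the extension from $|V(H)|$ variables is a nice clarification but not needed for the argument.
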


\begin{proof}
    We show a more general result, namely that the evaluation of any symmetric function at $1^k$ is a polynomial in $k$. By linearity of the evaluation operator, it suffices to show that the evaluation of any monomial symmetric function $m_\lam$ at $1^k$ is a polynomial. We claim that if $\lam\vdash n$, then $$m_\lam(1^k)=\binom{k}{\ell(\lam)}\binom{\ell(\lam)}{r_1(\lam),\dots,r_n(\lam)}.$$ With this claim, the result follows since the first binomial coefficient is a polynomial in $k$ while the second is just a constant. Indeed, $m_\lam$ is the symmetric closure of the term $x_1^{\lam_1}x_2^{\lam_2}\cdots x_{\ell}^{\lam_\ell}$, i.e. the sum of the elements in the orbit of $x_1^{\lam_1}x_2^{\lam_2}\cdots x_{\ell}^{\lam_\ell}$ under the action of the infinite symmetric group permuting the indeterminates $x_i$, and evaluation at $1^k$ corresponds to counting the total number of terms using only the indeterminates $x_1$ through $x_k$, since these terms map to 1 and all others map to 0 under this evaluation map. When restricted to $\ell$ of the indeterminates, the stabilizer has size $r_1(\lam)!\cdots r_n(\lam)!$, since permuting $x_i$ and $x_j$ leads to different terms if and only if their exponents are different, and there are $r_i(\lam)!$ ways to permute the indeterminates with the same exponent $i$ independently for each $i$. As such, by the orbit-stabilizer theorem, the number of terms in the orbit under $S_\ell$ is just $\ell!/(r_1(\lam)!\cdots r_n(\lam)!)$, which is the latter multinomial coefficient in the above. But the evaluation at $1^k$ concerns the size of the orbit under the action of $S_k$ rather than that of $S_\ell$, and to account for that, we multiply by the number of ways to pick $\ell$ indeterminates from $k$, which explains the first binomial coefficient.
\end{proof}

\begin{prop}\label{prop:chi_G^H_formula}
    For any graphs $G$ and $H$ and positive integer $k$, we have
    $$\chi_G^H(k)=|V(H)|!\sum_{L=1}^{\min\{|V(G)|,|V(H)|\}}\frac{\binom{k}{L}}{\binom{|V(H)|}{L}}\cdot\#\{f\in\tn{Hom}(G,H)\mid\ell(\tn{type}(f))=L\}.$$
\end{prop}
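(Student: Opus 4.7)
The plan is to start from the monomial expansion $X_G^H = \sum_\lam d_\lam m_\lam^{|V(H)|}$ given in the introduction, evaluate at $1^k$ using Lemma \ref{lem:chi_G^H_is_poly}, simplify, and then group terms by $L = \ell(\lam)$.

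First I would apply Lemma \ref{lem:chi_G^H_is_poly}: since $m_\lam^{|V(H)|}$ is just a scalar multiple of $m_\lam$, and since $m_\lam(1^k) = \binom{k}{\ell(\lam)}\binom{\ell(\lam)}{r_1(\lam),\dots,r_n(\lam)}$, we get
\[
m_\lam^{|V(H)|}(1^k) = \frac{|V(H)|!}{\binom{|V(H)|}{r_1(\lam),\dots,r_n(\lam),|V(H)|-\ell(\lam)}}\binom{k}{\ell(\lam)}\binom{\ell(\lam)}{r_1(\lam),\dots,r_n(\lam)}.
\]

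The key calculation — and the only place where anything really has to be checked — is the simplification of the ratio of multinomial coefficients. Writing both multinomials out as factorials, the factors of $r_1(\lam)!\cdots r_n(\lam)!$ cancel, leaving
\[
\frac{\binom{\ell(\lam)}{r_1(\lam),\dots,r_n(\lam)}}{\binom{|V(H)|}{r_1(\lam),\dots,r_n(\lam),\,|V(H)|-\ell(\lam)}} = \frac{\ell(\lam)!\,(|V(H)|-\ell(\lam))!}{|V(H)|!} = \binom{|V(H)|}{\ell(\lam)}^{-1}.
\]
Therefore $m_\lam^{|V(H)|}(1^k) = |V(H)|!\,\binom{k}{\ell(\lam)}/\binom{|V(H)|}{\ell(\lam)}$, which depends on $\lam$ only through its length.

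Finally, I would plug this back into $\chi_G^H(k) = \sum_\lam d_\lam m_\lam^{|V(H)|}(1^k)$ and group the sum by $L = \ell(\lam)$. Since $\sum_{\lam:\ell(\lam)=L} d_\lam$ is by definition the number of homomorphisms $f\in\tn{Hom}(G,H)$ whose type has length $L$, and since the length of the type of any homomorphism is bounded above by both $|V(G)|$ (the number of preimages) and $|V(H)|$ (the number of image vertices used), the sum runs exactly from $L=1$ to $L=\min\{|V(G)|,|V(H)|\}$, yielding the claimed formula. No step here should be a genuine obstacle; the whole argument is a bookkeeping exercise built on top of Lemma \ref{lem:chi_G^H_is_poly} and the definition of $m_\lam^{|V(H)|}$.
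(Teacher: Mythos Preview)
Your proposal is correct and follows essentially the same route as the paper: both arguments evaluate the monomial expansion at $1^k$ using the formula $m_\lam(1^k)=\binom{k}{\ell(\lam)}\binom{\ell(\lam)}{r_1(\lam),\dots,r_n(\lam)}$ from Lemma~\ref{lem:chi_G^H_is_poly}, perform the identical cancellation of multinomial coefficients to obtain the factor $\binom{|V(H)|}{\ell(\lam)}^{-1}$, and then group terms by $L=\ell(\lam)$. The only cosmetic difference is that you compute $m_\lam^{|V(H)|}(1^k)$ directly while the paper first converts to the ordinary $m_\lam$ coefficient of $X_G^H$ and then multiplies by $m_\lam(1^k)$; the underlying algebra is the same.
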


\begin{proof}
    From the argument in the proof of Proposition \ref{lem:chi_G^H_is_poly}, we have
    $$\chi_G^H(k)=X_G^H(1^k)=\sum_{\lam\vdash|V(G)|}m_\lam(1^k)[m_\lam]X_G^H=\sum_{\lam\vdash|V(G)|}\binom{k}{\ell(\lam)}\binom{\ell(\lam)}{r_1(\lam),\dots,r_n(\lam)}[m_\lam]X_G^H.$$ We know from the definition of the $H$-CSF that
    $$[m_\lam]X_G^H=\frac{n!}{\binom{n}{r_1(\lam),\dots,r_n(\lam),n-\ell(\lam)}}[m^n_\lam]X_G^H=\frac{n!}{\binom{n}{r_1(\lam),\dots,r_n(\lam),n-\ell(\lam)}}\cdot\#\{f\in\tn{Hom}(G,H)\mid\tn{type}(f)=\lam\}$$
    where $n=|V(H)|$. Substituting into the equation above and noting that $$\binom{\ell(\lam)}{r_1(\lam),\dots,r_n(\lam)}\bigg/\binom{n}{r_1(\lam),\dots,r_n(\lam),n-\ell(\lam)}=\frac{\ell(\lam)!(n-\ell(\lam))!}{n!}=\binom{n}{\ell(\lam)}^{-1},$$
    we see that
    $$\chi_G^H(k)=|V(H)|!\sum_{\lam\vdash|V(G)|}\frac{\binom{k}{\ell(\lam)}}{\binom{|V(H)|}{\ell(\lam)}}\cdot\#\{f\in\tn{Hom}(G,H)\mid\tn{type}(f)=\lam\}.$$
    Lastly, since the fraction $\binom{k}{\ell(\lam)}/\binom{|V(G)|}{\ell(\lam)}$ depends only on the value of $\ell(\lam)$, we can reindex the sum over all partitions $\lam$ by summing over partitions of each length separately. Doing so and noting that
    $$\sum_{\substack{\lam\vdash|V(G)|\\\ell(\lam)=L}}\#\{f\in\tn{Hom}(G,H)\mid\tn{type}(f)=\lam\}=\#\{f\in\tn{Hom}(G,H)\mid\ell(\tn{type}(f))=L\},$$
    and that the maximum value of $L$ for which this quantity is positive is bounded above by both $|V(G)|$ and $|V(H)|$, the result follows.
\end{proof}

\begin{cor}\label{cor:H-chromatic_poly_1}
    The sequence $\big\{\#\{f\in\tn{Hom}(G,H)\mid\ell(\tn{type}(f))=L\}\big\}_{L=1}^{\min\{|V(G)|,|V(H)|\}}$ uniquely determines $\chi_G^H$.
\end{cor}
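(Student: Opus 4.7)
The plan is to derive this corollary as an almost immediate consequence of Proposition \ref{prop:chi_G^H_formula}. That proposition already writes $\chi_G^H(k)$ as an explicit linear combination
$$\chi_G^H(k)=\sum_{L=1}^{\min\{|V(G)|,|V(H)|\}}c_L\binom{k}{L},\qquad c_L=\frac{|V(H)|!}{\binom{|V(H)|}{L}}\cdot a_L,$$
where $a_L:=\#\{f\in\tn{Hom}(G,H)\mid\ell(\tn{type}(f))=L\}$ is exactly the $L$th term of the given sequence. So the first step is simply to invoke that formula.

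The second step is to observe that for fixed $G$ and $H$, the quantities $|V(H)|$ and $L$ are known (they are part of the data of $\chi_G^H$), so the rescaling factor $|V(H)|!/\binom{|V(H)|}{L}$ is a constant. Hence each coefficient $c_L$ is determined by the corresponding entry $a_L$ of the sequence, and conversely.

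The third step is to note that the ``binomial basis'' $\bigl\{\binom{k}{L}\bigr\}_{L\ge0}$ is linearly independent in the space of polynomials in $k$ (since $\binom{k}{L}$ has degree exactly $L$), so the representation of $\chi_G^H$ as a linear combination of these $\binom{k}{L}$'s is unique. Therefore, specifying the coefficients $c_L$, equivalently the sequence $(a_L)$, pins down $\chi_G^H$ as a polynomial, which completes the argument.

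There is no genuine obstacle here; the content of the corollary is already packaged inside Proposition \ref{prop:chi_G^H_formula}, and the proof amounts to pointing out that the formula is a triangular (in fact diagonal) change-of-basis between the sequence $(a_L)$ and the polynomial's coordinates in the binomial basis. The only thing worth being careful about is making the linear independence of $\binom{k}{L}$ (and thus the uniqueness of the coefficients) explicit, rather than taking it for granted.
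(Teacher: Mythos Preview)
Your approach is correct and matches the paper's: the corollary is stated without proof there, as it is immediate from the formula in Proposition \ref{prop:chi_G^H_formula}. One minor comment: your third step about linear independence of the $\binom{k}{L}$'s is unnecessary for the direction actually being claimed --- the corollary only says the sequence determines $\chi_G^H$, and for that you just plug the $a_L$'s into the formula; linear independence would only be needed for the converse (recovering the $a_L$'s from $\chi_G^H$).
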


In fact, since $\#\{f\in\tn{Hom}(G,H)\mid\ell(\tn{type}(f))=L\}$ is just the same as
$$\#\{f\in\tn{Hom}(G,H)\mid\ell(\tn{type}(f))\leq L+1\}-\#\{f\in\tn{Hom}(G,H)\mid\ell(\tn{type}(f))\leq L\},$$
the corollary above is equivalent to the following.

\begin{cor}\label{cor:H-chromatic_poly_2}
    The sequence $\big\{\#\{f\in\tn{Hom}(G,H)\mid\ell(\tn{type}(f))\leq L\}\big\}_{L=1}^{\min\{|V(G)|,|V(H)|\}}$ uniquely determines $\chi_G^H$.
\end{cor}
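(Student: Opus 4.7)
The plan is to deduce Corollary \ref{cor:H-chromatic_poly_2} as an immediate consequence of Corollary \ref{cor:H-chromatic_poly_1} by showing that the cumulative count sequence and the exact count sequence carry equivalent information.

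First I would name the two sequences: for $1 \le L \le \min\{|V(G)|, |V(H)|\}$, set $a_L = \#\{f \in \tn{Hom}(G,H) \mid \ell(\tn{type}(f)) = L\}$ and $b_L = \#\{f \in \tn{Hom}(G,H) \mid \ell(\tn{type}(f)) \le L\}$, with the convention $b_0 = 0$. Since every homomorphism has a type whose length falls in exactly one of the strata ``$=L$'', the events partition the homomorphism set, and one obtains the identity $b_L = \sum_{L'=1}^{L} a_{L'}$. This gives a linear map from the $a$-sequence to the $b$-sequence.

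Next I would invert this relation by telescoping: for each $L \ge 1$ we have $a_L = b_L - b_{L-1}$. Hence knowing $\{b_L\}_{L=1}^{\min\{|V(G)|,|V(H)|\}}$ (together with the convention $b_0 = 0$) allows us to recover $\{a_L\}_{L=1}^{\min\{|V(G)|,|V(H)|\}}$. Combined with the previous paragraph, the two sequences encode exactly the same data.

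Finally, I would invoke Corollary \ref{cor:H-chromatic_poly_1}, which states that $\{a_L\}$ uniquely determines $\chi_G^H$. Composing the recovery $\{b_L\} \mapsto \{a_L\}$ with this determination yields that $\{b_L\}$ uniquely determines $\chi_G^H$, which is the desired statement. There is no real obstacle here; the only thing to watch is bookkeeping at the endpoint (the convention $b_0 = 0$, and the fact that both sequences are indexed over the same range $1 \le L \le \min\{|V(G)|,|V(H)|\}$ since no homomorphism can have type of length exceeding either vertex count).
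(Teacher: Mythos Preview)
Your proposal is correct and matches the paper's own argument essentially verbatim: the paper also reduces Corollary~\ref{cor:H-chromatic_poly_2} to Corollary~\ref{cor:H-chromatic_poly_1} by the one-line telescoping identity $a_L = b_L - b_{L-1}$. If anything, your version is slightly more careful, since you explicitly note the convention $b_0=0$ and the common index range, whereas the paper states the difference with a minor index slip.
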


This characterization makes it apparent that the property that the regular chromatic polynomial is determined by the sequence $\big\{\#\{\tn{colorings of }G\tn{ using at most }L\tn{ colors}\}\big\}_{L=1}^{|V(G)|}$ generalizes to $H$-chromatic polynomials in the desired way.

\begin{example}\label{ex:4cycle_polynomial}
    Let $H$ be the 4-cycle and $G$ be a tree with bipartition sizes $a\geq1$ and $b\geq1$. Then
    $$\chi_G^H(k)=16k(k-1)+(2^{a+2}+2^{b+2}-16)k(k-1)(k-2)+(2^{a+b+1}-2^{a+2}-2^{b+2}+8)k(k-1)(k-2)(k-3).$$
\end{example}

\begin{proof}
    We claim that
    \begin{align*}
        \#\{f\in\tn{Hom}(G,H)\mid\ell(\tn{type}(f))\leq 1\}&=0,\\
        \#\{f\in\tn{Hom}(G,H)\mid\ell(\tn{type}(f))\leq 2\}&=8,\\
        \#\{f\in\tn{Hom}(G,H)\mid\ell(\tn{type}(f))\leq 3\}&=2^{a+2}+2^{b+2}-8,\\
        \#\{f\in\tn{Hom}(G,H)\mid\ell(\tn{type}(f))\leq 4\}&=2^{a+b+1}.
    \end{align*}
    
    Indeed, the first equation follows from the fact that $G$ has an edge and therefore so does $f(G)$, so the vertices of $G$ cannot all map to the same vertex in $H$, but this is the only way for $\tn{type}(f)$ to have length 1.
    
    The second equation follows from the fact that the only way for $\tn{type}(f)$ to have length 2 is for all vertices in one bipartition of $G$ to map to the same vertex $v$ and all vertices in the other bipartition to map to a vertex $w$ adjacent to $v$, and there are 8 possible choices (since there are 4 edges in $H$, multiplied by the 2 possible orientations of each edge) for the ordered pair $(v,w)$ from $V(H)$.

    To explain the third equation, first observe that for $\tn{type}(f)$ to have length at most 3, $f(G)$ must map to a subgraph of $H$ isomorphic to the 3-vertex path. In such a case, either all vertices in the part of size $a$ will map to the central vertex $c$ of said path, in which case there are $2^b$ choices for the images of the remaining vertices since there are 2 choices (the two neighbors of $c$) for each of these $b$ vertices, or all vertices in the part of size $b$ will map to $c$, with $2^a$ choices for the images of the remaining $a$ vertices. Since the choice of $c$ uniquely determines the choice of said 3-vertex path (due to the fact that the two other vertices must be the two neighbors of $c$), and there are 4 such choices (namely the 4 vertices of $H$), we get a total of $4(2^a+2^b)=2^{a+2}+2^{b+2}$ choices. However, this double-counts the 8 cases where $\ell(\tn{type}(f))=2$, since in such cases, $f(G)$ belongs to two different 3-vertex paths in $H$, so we subtract this from the total.

    The fourth equation follows from the fact that a homomorphism $f:G\rightarrow H$ can be inductively constructed as follows. We first order the elements of $V(G)$ into a sequence $v_1,v_2,\dots,v_{|V(G)|}$ such that $v_{i+1}$ is adjacent to the induced subtree of $G$ formed from the vertices $v_1$ through $v_i$ for each $1\leq i\leq|V(G)|-1$. Then there are 4 possible choices for $f(v_1)$ (namely the 4 vertices of $H$), and 2 possible choices for the image of each of the subsequent $a+b-1$ vertices of $G$, since given a choice of $f(v_1),\dots,f(v_i)$, there are 2 choices for $f(v_{i+1})$, namely the two neighbors of $f(v_j)$ where $v_j$ is the unique neighbor of $v_{i+1}$ whose image has already been picked. This gives a total of $4\cdot 2^{a+b-1}=2^{a+b+1}$ choices of $f$.

    Lastly, plugging in these values of $\#\{f\in\tn{Hom}(G,H)\mid\ell(\tn{type}(f))= L\}$ into the formula for $\chi_G^H$ obtained in Proposition \ref{prop:chi_G^H_formula} gives
    $$\chi_G^H(k)=4!\bigg(\frac{8}{\binom{4}{2}}\binom{k}{2}+\frac{2^{a+2}+2^{b+2}-16}{\binom{4}{3}}\binom{k}{3}+\frac{2^{a+b+1}-2^{a+2}-2^{b+2}+8}{\binom{4}{4}}\binom{k}{4}\bigg),$$
    which simplifies to the desired result.
\end{proof}

\subsection{Graphs with equal \texorpdfstring{$H$}{H}-chromatic polynomial but different \texorpdfstring{$H$}{H}-CSF}\label{subsec:equal_poly}

As a corollary of Example \ref{ex:4cycle_polynomial}, any two trees with the same bipartition sizes would have the same $C_4$-chromatic polynomial. But this is as expected: the authors of \cite{EFHKY22} showed more generally that any two connected bipartite graphs with the same part sizes in their bipartitions have the same $K_{m,n}$-CSF, and hence any two such graphs also have the same $H$-chromatic polynomial. However, the $H$-CSF contains more information than the $H$-chromatic polynomial, so we can also ask when the $H$-CSF distinguishes graphs that the $H$-chromatic polynomial does not:

\begin{question}
    Given fixed $H$, which collections of graphs $G$ have the same $\chi_G^H$ but distinct $X_G^H$'s? 
\end{question}

If $H=K_n$ and $|V(G)|\le n$, the $H$-CSF is just a scalar multiple of the ordinary CSF, and hence the $H$-chromatic polynomial is a corresponding scalar multiple of the ordinary chromatic polynomial, so the question becomes which graphs have the same chromatic polynomial but different CSF. It is known, for instance, that all $n$-vertex trees have the same chromatic polynomial $\chi_T(k) = k(k-1)^{n-1},$ while it is famously believed that no two nonisomorphic trees have the same CSF.

However, we can also find other examples that are specific to $H$-CSFs of graphs with the same $H$-chromatic polynomial but different $H$-CSF. For instance, let $S_n^1$ be the $n$-vertex star graph with an extra loop attached to the center vertex, a case for which the $H$-CSF was studied in \cite{EFHKY22}. In that case we can always find graphs distinguished by the $H$-CSF but not the $H$-chromatic polynomial:

\begin{prop}\label{prop:star_plus_loop_polynomial}
    For every $n$, there exist graphs with the same $S_n^1$-chromatic polynomial but different $S_n^1$-CSF.
\end{prop}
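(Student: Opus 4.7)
The plan is to leverage the fact that $X_G^{S_n^1}$ is a strictly finer invariant of $G$ than $\chi_G^{S_n^1}$: the former is determined by $|V(G)|$ together with the vertex cover polynomial of $G$, whereas the latter is determined only by a strict linear projection of that data. So the strategy is to exhibit, for each $n$, two graphs on the same vertex set whose vertex cover polynomials differ but give the same projection.

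First I would classify the homomorphisms $f : G \to S_n^1$. Writing $c$ for the center (which carries the loop) and $\ell_1, \dots, \ell_{n-1}$ for the leaves, the pairwise non-adjacency of the leaves forces every edge of $G$ to have at least one endpoint mapping to $c$, so $C := f^{-1}(c)$ must be a vertex cover of $G$; conversely, every choice of a vertex cover $C$ together with an arbitrary function $V(G) \setminus C \to \{\ell_1, \dots, \ell_{n-1}\}$ yields a valid homomorphism. Expanding $[m^n_\lam] X_G^{S_n^1}$ by conditioning on which part of $\lam$ corresponds to $|C|$ thus gives an expression $\sum_j c_j(G)\,\alpha_j(\lam, v, n)$ for combinatorial constants $\alpha_j$ independent of $G$, where $v := |V(G)|$ and $c_j(G)$ is the number of vertex covers of $G$ of size $j$. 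Consequently $X_G^{S_n^1}$ is determined by $v$ and the vertex cover polynomial $(c_j(G))_{j=1}^v$, and in particular two graphs on the same vertex count with distinct vertex cover polynomials have distinct $S_n^1$-CSFs.

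Next, by Proposition~\ref{prop:chi_G^H_formula}, $\chi_G^{S_n^1}$ is determined by the integers $N_L(G) := \#\{f \in \tn{Hom}(G, S_n^1) : \ell(\tn{type}(f)) = L\}$ for $L = 1, \dots, \min(v, n)$. Assuming $G$ has at least one edge (so the center is always used), the homomorphism characterization gives
\[
N_L(G) = \binom{n-1}{L-1}(L-1)! \sum_{j=1}^{v} c_j(G)\,S(v-j, L-1),
\]
which realizes $(N_L)_{L=1}^n$ as a linear image of $(c_j)_{j=1}^v$ under a matrix with a staircase structure whose antidiagonal entries are nonzero, giving it rank $\min(v, n)$. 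Hence for $v > n$ this map has kernel of dimension $v - n \geq 1$, so many distinct vertex cover polynomials map to the same $(N_L)$-tuple, and therefore to the same $\chi_G^{S_n^1}$.

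To finish, one must realize this kernel by vertex cover polynomials of actual graphs. For $n = 2$, the pair $G_1 = K_{1,3}$ and $G_2 = 2K_2$ on four vertices suffices: their vertex cover polynomials are $(1, 3, 4, 1)$ and $(0, 4, 4, 1)$, differing by $(1, -1, 0, 0)$, which lies in the kernel since both satisfy $c_1 + c_2 + c_3 = 8$ and $c_4 = 1$; the coefficients of $m^2_{3,1}$ in their $S_2^1$-CSFs differ ($5$ versus $4$), so the two graphs share the same $\chi_G^{S_2^1}$ but have different $S_2^1$-CSFs. For general $n$, I would exhibit analogous pairs on $v = v(n)$ vertices by verifying the $n$ linear relations $\sum_j (c_j(G_1) - c_j(G_2))\,S(v-j, L-1) = 0$ for $L = 1, \dots, n$ directly; the main obstacle is producing such a pair uniformly in $n$. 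A natural route is to construct both $G_1$ and $G_2$ as disjoint unions of carefully chosen small components together with an appropriate number of isolated vertices, tuning the pieces so that the vertex cover polynomial difference lies in the kernel (which, by the Stirling-number structure of the matrix, can be solved explicitly). Alternatively, since the number of isomorphism classes of $v$-vertex graphs grows super-polynomially while the number of possible $(N_L)_{L=1}^n$ tuples grows only polynomially in $v$, a pigeonhole argument guarantees, for each $n$ and $v$ sufficiently large, that two graphs lie in the same fiber of the above linear map while having distinct vertex cover polynomials.
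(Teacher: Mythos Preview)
Your setup is essentially the paper's: characterize $\mathrm{Hom}(G,S_n^1)$ via vertex covers (equivalently, independent sets), express $\chi_G^{S_n^1}$ through the counts $N_L$, observe these are $n$ fixed linear combinations of the $c_j$'s, and exploit the resulting kernel when $v>n$. Your formula for $N_L$ and your rank analysis are correct, and your $n=2$ example is fine.

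There are two gaps. First, a logical slip: from ``$X_G^{S_n^1}$ is determined by $(c_j)$'' you conclude that distinct $(c_j)$ give distinct $X_G^{S_n^1}$. That is the wrong direction; you need that the CSF \emph{determines} the $(c_j)$. This is in fact true (it is exactly the statement from \cite{EFHKY22} that the $S_n^1$-CSF is equivalent to the full independent-set sequence), but your argument as written does not establish it.

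Second, and more seriously, your realization step for general $n$ is incomplete. The pigeonhole argument is flawed: the number of possible $(N_L)_{L=1}^n$ tuples is \emph{not} polynomial in $v$ (each $N_L$ can be exponentially large in $v$), and even with corrected growth rates, pigeonholing graphs against $(N_L)$-tuples would only give two graphs with the same $\chi$, not two with \emph{distinct} vertex-cover polynomials and the same $\chi$. Your ``disjoint unions'' suggestion is too vague to count as a proof. The paper closes this gap cleanly: it observes that any integer sequence $(i_j)$ with each $i_j$ sufficiently large relative to $i_{j+1},\dots,i_{n+1}$ is the independent-set sequence of an actual graph (start from a large complete graph and delete edges from disjoint cliques of sizes $n+1,n,\dots$ in turn to tune each $i_j$). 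One then takes a nonzero integer vector $(i_j-i_j')$ in the kernel of the $n\times(n+1)$ system, chooses $(i_j')$ realizable, and sets $i_j=i_j'+(i_j-i_j')$; both sequences are realizable, they differ, and they yield the same $\chi$. Adding this realization argument (and reversing the direction of the determination claim) would make your proof complete and essentially identical to the paper's.
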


\begin{proof}
    The authors of \cite{EFHKY22} show that knowing the $S_n^1$-CSF of a graph is equivalent to knowing the number of independent sets of every size. On the other hand, Corollary \ref{cor:H-chromatic_poly_1} shows that knowing the $S_n^1$ chromatic polynomial is equivalent to knowing the number of maps from $G$ to $H$ using exactly $L$ vertices of $H$ for each $L\le n$. Let $i_j$ be the number of independent sets of size $j$ in $G$ for each $j\ge 1$. If $G$ is not edgeless, then any map from $G$ to $S_n^1$ must use the center vertex of $S_n^1$, and the only restriction is that the vertices of $G$ mapping onto other vertices of $S_n^1$ form an independent set in $G$. Then if we want to count maps using $L$ vertices of $S_n^1$, we can first choose which of the remaining vertices to use in $\binom{n-1}{L-1}$ ways. Then we need to choose some independent set in $G$, and distribute its vertices among those $L-1$ vertices such that all $L-1$ vertices have at least one vertex of $G$ mapping to them. The number of ways to do this is $$\binom{n-1}{L-1}\sum_{j\ge L-1}i_j\cdot (L-1)!\cdot S(j,L-1),$$ where $S(j,L-1)$ is the Stirling number counting set partitions of a size $j$ set into $L-1$ nonempty parts, and the $(L-1)!$ is because we care which part of the set partition maps to which vertex in $S_n^1$. Thus, if we want a graph $G'$ to have the same $S_n^1$-CSF as $G$, it suffices to ensure that for every $j$, $$\binom{n-1}{L-1}\sum_{j\ge L-1}(i_j-i_j')\cdot (L-1)!\cdot S(j,L-1)=0.$$ This can be thought of as a triangular system of $n$ equations in the variables $i_j-i_j'$ (since $L$ can range from 1 to $n$), so if we let $i_1-i_1',\dots,i_{n+1}-i_{n+1}'$ be our variables, we have more variables than equations and thus can find a nontrivial solution, and we can require it to have all rational entries since our matrix has all integer coefficients. Then we can scale the entries up so they are in fact all integers. Now, the $i_j$'s will represent the independent set counts of an actual graph as long as the earlier $i_j$'s are sufficiently large compared to the later ones, since in that case we can start with a large complete graph and then deleting all edges from disjoint size $n+1$ cliques until we get the desired value of $i_{n+1}$, then delete edges from size $n$ cliques until we get the desired value of $i_n$, and so on. Thus, we can simply take each $i_j'$ to be a positive integer that is sufficiently large compared to the later $i_\ell'$'s, and then we can add the differences $i_j-i_j'$ to get the corresponding $i_j$ values, and we will be able to construct graphs $G$ and $G'$ with the desired independent set counts. Then these graphs $G$ and $G'$ will have the same $S_n^1$-chromatic polynomial by construction. However, they will not have the same $S_n^1$-CSF, since for that we would need $i_j=i_j'$ for every $j$, and we have chosen the $i_j$ and $i_j'$ values such that $i_j\ne i_j'$ for at least one value of $j$. Thus, $G$ and $G'$ will have the same $H$-chromatic polynomial but different $H$-CSF, as needed.
\end{proof}

\section*{Acknowledgments}

We thank Logan Crew and Sophie Spirkl for getting us started on this project and for many helpful discussions and suggestions related to it.

\printbibliography

\section*{Appendix: Sage Function to Compute H-CSFs}

See \href{https://github.com/t0tientqu0tient/H-CSFs-of-Graphs}{GitHub repository}.

\end{document}